\let\spacecal=\mathscr
\newtheorem{thm}{Theorem}[section] 
\newtheorem{corol}[thm]{Corollary}
\newtheorem{lemma}[thm]{Lemma} 
\newtheorem{prop}[thm]{Proposition}
\newtheorem{defin}[thm]{Definition}
\theoremstyle{definition}
\theoremstyle{remark}
\newtheorem{remark}[thm]{Remark}
\newtheorem{example}[thm]{Example}
\numberwithin{equation}{section}
\newcommand\rk{\operatorname{rk}}
\newcommand\Id{\operatorname{Id}}
\newcommand\Ima{\operatorname{Im}}
\newcommand\Spec{\operatorname{Spec}}
\newcommand\Hom{\operatorname{Hom}}
\newcommand\iso{\kern.35em{\raise3pt\hbox{$\sim$}\kern-1.1em\to}\kern.3em}
\newcommand\Pic{\operatorname{Pic}}
\newcommand\rest[2]{#1_{\vert #2}}
\newcommand\F{{\mathcal F}}
\newcommand\Oc{{\mathcal O}}
\newcommand\M{{\mathcal M}}
\newcommand\Z{{\mathbb Z}}
\newcommand\U{{\spacecal U}}
\newcommand\Dcal{{\mathcal D}}
\newcommand\Nc{{\mathcal N}}
\newcommand\Lcl{{\mathcal L}}
\newcommand\Ec{{\mathcal E}}
\newcommand\Ac{{\mathcal A}}
\newcommand\Bc{{\mathcal B}}
\newcommand\Gc{{\mathcal G}}
\newcommand\Xcal{{\spacecal X}}
\newcommand\Ycal{{\spacecal Y}}
\newcommand\Sc{{\spacecal S}}
\newcommand\Tc{{\spacecal T}}
\newcommand\Zc{{\spacecal Z}}
\newcommand\Vc{{\mathcal V}}
\newcommand\Rcal{{\spacecal R}}
\newcommand\Jc{{\mathcal J}}
\newcommand\Cc{{\mathcal C}}
\newcommand\Der{{{\mathcal D}er}}
\newcommand\Ber{{{\mathcal B}er}}
\newcommand\Div{{{\mathcal D}iv}}
\newcommand\Xf{{\mathfrak X}}
\newcommand\Yf{{\mathfrak Y}}
\newcommand\Sf{{\mathfrak S}}
\newcommand\Df{{\mathcal D}}
\newcommand\Zf{{\mathcal Z}}
\newcommand\nf{{\mathfrak n}}
\newcommand\RR{{Ramond-Ramond}}
\newcommand{\rra}{\rightrightarrows} 
\begin{document}
\title[Supermoduli of SUSY curves with Ramond punctures]{THE SUPERMODULI OF SUSY CURVES \\[3pt]  WITH RAMOND PUNCTURES}
\author[U. Bruzzo]{Ugo  Bruzzo$^\P$}
\author[D. Hern\'andez Ruip\'erez]{Daniel Hern\'andez Ruip\'erez$^\ddag$}

\email{bruzzo@sissa.it}
\email{ruiperez@usal.es}

\address{\P\ SISSA (Scuola Internazionale Superiore di Studi Avanzati), Via Bonomea 265, 34136, Trie\-ste, Italia;
Departamento de Matem\'atica, Universidad Federal da Pa\-ra\'i\-ba,  Campus I, Jo\~ao Pessoa, PB, Brazil; 
INFN (Istituto Nazionale di Fisica Nucleare), Sezione di Trieste; 
IGAP (Institute for Geometry and Physics), Trieste; 
Arnold-Regge Center for Algebra, Geometry and Theoretical Physics, Torino
}
\address{\ddag\ Departamento de Matem\'aticas and IUFFYM (Instituto Universitario de F\'{\i}sica Fundamental \\ y Matem\'a\-ticas),  Universidad de Salamanca, Plaza
de la Merced 1-4, 37008 Salamanca, Spain; \\
Real Academia de Ciencias Exactas, F\'{i}sicas y Naturales, Spain}
\date{9 February 2020. Revised 31 May 2020} 

\thanks{\P\ Research partly supported by GNSAGA-INdAM and by the PRIN project ``Geometria delle variet\`a algebriche.''}
\thanks{\ddag\ Research partly supported by research projects  ``Espacios finitos y functores integrales'', MTM2017-86042-P,  (Ministerio of Econom\'{\i}a, Industria y Competitividad) and  ``STAMGAD, SA106G19'' (Junta de Castilla y Le\'on)}

\subjclass[2010]{Primary: 14M30; Secondary:
14K10, 32D11, 14D22, 14D23, 14H10, 83E30} 
\keywords{Supersymmetric curves, SUSY curves, Neveu-Schwarz punctures, Ramond punctures, supermoduli, algebraic superspaces}

\begin{abstract} We construct local and global moduli spaces of supersymmetric curves with Ramond-Ramond punctures. 
We assume that the underlying ordinary algebraic curves have a level $n$ structure and build these supermoduli spaces as algebraic superspaces, i.e.,   quotients of   \'etale equivalence relations between superschemes. 
 \end{abstract}
\maketitle 

\section{Introduction}
Supersymmetry requires the introduction of spaces with ``both bosonic and fermionic coordinates,''
as fermion fields and fermionic parameters are anticommuting objects.
After such spaces were formally introduced by physicists (see e.g.~\cite{SS}),
mathematicians immediately set
out to look for a sound notion of these structures,  and in the beginning different kinds of ``supermanifolds'' were  proposed (see \cite{BBH91} and references therein). It became soon apparent that the Berezin-Le\u\i tes approach \cite{BL75}, also developed by  Kostant \cite{Kost75} and Manin \cite{Ma86,Ma87,Ma88,Ma88-2}, closer in spirit to algebraic geometry, provides the most suitable notion of supermanifold, or graded manifolds as Kostant called them, and of supervarieties or superschemes. 

Super Riemann surfaces,  or supersymmetric (SUSY) curves, are an interesting chapter in this story. Their introduction 
was advocated by Friedan \cite{Fr86} (but see also \cite{BaSchw87,Ma86}) in connection with superstring theory. In the Polyakov approach to the bosonic string  \cite{Pol81}  the quantum scattering   amplitudes are computed in terms of  an integral over a compactification of the moduli space of (complex) algebraic curves,
 with punctures   representing the  asymptotic particle states. However, one of the effects of the compactification is that the Polyakov measure acquires poles at the boundary.   The fermions that one introduces in supersymmetric models  help to cancel the poles, and Friedan suggested that a compactification of a suitable moduli of punctured SUSY curves could be the right integration space for   superstrings.

There are various constructions of these supermoduli spaces, different in techniques and nature. There are analytic constructions of the supermoduli of SUSY curves of genus $g\ge 2$ as an orbifold (see, for instance, \cite {CraRab88}), and, among them, the seminal paper by LeBrun and Rothstein \cite{LeRoth88} on which many other constructions rely.  In \cite{DoHeSa97} a supermoduli space for SUSY curves with Neveu-Schwarz (NS) punctures was constructed as an (Artin) algebraic superspace, that is, the quotient of an \'etale equivalence relation of superschemes. To ensure the existence of a fine moduli for  ordinary curves, these constructions fix a level $n$ structure, with $n\ge 3$, on the curves.
Quite recently, Codogni and Viviani have given a construction of the supermoduli of SUSY curves as a Deligne-Mumford (DM) superstack \cite{CodViv17} without considering level $n$ structures. The   supermoduli built in   \cite{DoHeSa97} as an algebraic superspace is actually an explicit construction of a coarse moduli space for the supermoduli DM stack, that should exist once the representation result in \cite{KeelMori97} will have been extended to supergeometry. Deligne's manuscript letter \cite{Del87} contains all the necessary information to construct  global compactified moduli superspaces as   algebraic superstacks obtained from the DM stack of stable algebraic curves.

In the  last years, there has been an increasing  renewal of interest in supergeometry, in particular in the supermoduli of SUSY curves, with and without punctures. The most important  works that have recently brought new life to supergeometry are those of Witten \cite{Witten19} and Donagi-Witten \cite{DoW13,DoW15}.
Although no construction of the supermoduli spaces is offered there, those papers  contain  several results on the nonprojectedness of the supermoduli, which have important consequences for perturbative superstring theory, as this shows that one cannot integrate on the supermoduli by first integrating over the  fibres of a (nonexisting) projection to the ordinary moduli. 
Some earlier results about the projectedness of the supermoduli can be found  in \cite{FaRe90}.
Moreover,  substantial work has  been done about superperiods  and the generalization  of the Mumford formula  to the supermoduli of SUSY curves \cite{Witten19, Dir19, CodViv17, FKP19}.

This paper is devoted to the study of \RR\ supersymmetric curves (RR-SUSY curves, for short), i.e., loosely speaking,
SUSY curves that have Ramond-Ramond punctures supported along positive divisors. We study their geometry and construct their supermoduli.  To our knowledge, there is no other global construction of the supermoduli of SUSY curves with \RR\ punctures  for positive genus
available in the literature
(see the recent preprint \cite{Ott-Vor} for the genus zero case). 
We prove that the moduli functor of \emph{isomorphisms classes} of RR-SUSY curves is representable by an (Artin) algebraic superspace (this  provides a more detailed information than just constructing the moduli space as a Deligne-Mumford stack). In the complex case, algebraic spaces, that is, the quotients of \'etale equivalence relations of  schemes  (locally of finite type) \cite{Ar71,Knut71} are precisely those sheaves whose underlying analytic space is a Mo\u\i\v sezon space, namely, an analytic space whose field of meromorphic functions has transcendence degree equal to the dimension of the space \cite{Moi69}. In the ``super'' case one defines algebraic superspaces as quotients of \'etale equivalence relations of superschemes.

The paper is organized as follows. In Section \ref{s:super}, to fix the notation
and for the sake of a reasonable self-completeness, 
 we provide a summary of well-known basic facts about superschemes and their morphisms.  The material here can be easily found in the literature, though sometimes the definitions and results are scattered through many different sources. Section \ref{s:RRsusy} is devoted to the geometry of supercurves, understood as superschemes of dimension $(1,1)$,  of positive superdivisors and of SUSY curves with \RR\ punctures along a positive supervisor. According to the physical interpretations of punctures, we assume that the positive superdivisors along which the \RR\ punctures are supported are reduced, or, in the case of families, that the superdivisors are nonramified over the base. SUSY curves with \RR\ punctures are not SUSY curves, since they are equipped with an odd distribution which is superconformal only in the complement of the divisors. For a single RR-SUSY curve, around a prime divisor of the puncture the conformal distribution is locally generated by a vector field
$$
D=\frac{\partial}{\partial \theta}+z\theta \frac{\partial}{\partial z}.
$$
It is worth noticing that, contrary to what happens for SUSY curves, for a family $\Xcal\to\Sc$ of RR-SUSY curves this   is true only after taking an \'etale covering of the base $\Sc$ (Proposition \ref{p:localsplitRR}).
The infinitesimal deformations of an RR-SUSY curve $\Xcal\to S$ (over an ordinary scheme) are computed in Corollary \ref{cor:h1dimrel}. They are needed to give the local   fermionic structure of the supermoduli.

Section \ref{s:modRR} is devoted to the global construction of the supermoduli of RR-SUSY curves of genus $g$ with $\nf_R$ \RR\ punctures. 
We assume $g\geq 2$ and   fix a level $n$ structure with $n\geq 3$.
The first step of the construction is the determination of the underlying ordinary space to the supermoduli. This happens to be the moduli scheme $M_{RR}$ of RR-spin curves, that is, of curves with a line bundle whose square is the canonical sheaf twisted by the inverse of the ideal sheaf of the puncture (Theorem \ref{thm:RRfunct}). It is a fine moduli space, in the sense that it represents the sheafification of the functor of RR-spin curves, {though}, as the latter is not a sheaf, $M_{RR}$ does not carry a universal RR-spin curve. However, it has a universal ``RR-SUSY curve class,'' namely, there is a universal curve $X_{RR}\to M_{RR}$ with a relative positive divisor $Z_{RR}$ of degree $\nf_R$ and a class $\Upsilon$ in $\Pic(X_{RR}/M_{RR})$ such that $\Upsilon^2=[\kappa_{X_{RR}/M_{RR}}\otimes\Oc(Z_{RR})]$ in the relative Picard group, but $\Upsilon$ may fail to be the class of a line bundle on $X_{RR}$. We prove that there is an \'etale covering $U\to M_{RR}$ such that the pull-back $\Upsilon_U$ is the class of a line bundle $\Lcl$ on $X_U$ verifying $\Lcl^2\simeq \kappa_{X_U/U}\otimes\Oc(Z_U)$. 

We can then move on  to the second step of the construction: the determination of the local fermionic structure of the supermoduli $\M_{RR}$. If it exists, we see that locally its structure sheaf is the exterior algebra of the sheaf of odd infinitesimal deformations. Since we have already computed the latter, we get a local candidate for the supermoduli, namely, the superscheme $\U=(U, \bigwedge R^1\pi_{U\ast} (\kappa (Z)^{1/2}\otimes\kappa))$, where $\kappa=\kappa_{X_U/U}$ and $Z=Z_U$ (Definition \ref{def:localmoduliRR}). To complete the global construction we extend to RR-SUSY curves two results of  LeBrun and Rothstein \cite{LeRoth88} (our theorems \ref{LBR1-RR} and \ref{LBR2-RR}). The key point for the extension is Corollary \ref{cor:h0dimrel}, which reflects the fact that, under the hypotheses  we have made,
 RR-SUSY curves have a finite number of automorphisms.

The supermoduli space
turns out to be an algebraic superspace of dimension $(3g-3+\nf_R,2g-2+\nf_R/2)$. This is the content of Theorem \ref{thm:globlamoduiRR}. Moreover in Proposition \ref{p:universalcurveRR}  the ``universal RR-SUSY curve class'' $\Xf_{\nf_{NS},\nf_{R}}\to \M_{RR}$ over the supermoduli is also proven to have the structure of an algebraic superspace.

Finally, in Section \ref{s:allpunctures} we combine our construction for the supermoduli of SUSY curves with \RR\ punctures with the supermoduli for SUSY curves with Neveu-Schwarz punctures to obtain the algebraic superspace which is the supermoduli for SUSY curves with booth types of punctures.

{\bf Acknowledgements.} We thank the participants in the ``Workshop on Supermoduli'' which took place
at the Institute for Geometry and Physics in Trieste on September 23 to 26, 2019, for the nice atmosphere and the fruitful interchanges which occurred. We also thank Ron Donagi for stimulating discussions and   for sharing ideas, and Alexander Polishchuk for pointing out a small mistake in a previous version. We thank the referee for comments that allowed us to give a more precise statement
of Proposition \ref{p:caractberRR}.

\bigskip
\section{Superschemes and morphisms}\label{s:super}

In this section we recall  some basic definitions in supergeometry. We fix an algebraically closed field $k$  of characteristic different from 2; however, from some point on we shall assume $k=\mathbb C$.

\subsection{Superschemes}
We start by recalling the notion of superscheme.

\begin{defin}
A   locally ringed superspace is a pair $\Xcal=(X,\Oc_{\Xcal})$, where $X$ is a topological space, and $\Oc_{\Xcal}$ is a sheaf of $\Z_2$-graded commutative algebras over $k$ such that every stalk $\Oc_{\Xcal,x}$ is a local ring for every point $x\in X$. 
\end{defin}

There is a decomposition $\Oc_{\Xcal}=\Oc_{\Xcal,0}\oplus \Oc_{\Xcal,1}$ into the part of degree 0 (or even part) and the part of degree 1 (or odd part). As usual, by ``graded commutative'' we understand that   $a\cdot b=(-1)^{\vert a\vert\,\vert b\vert}b\cdot a$ for homogenous local sections $a$, $b$, where $\vert\ \vert$ denotes the $\Z_2$-grading.

We refer to any text on supergeometry for the usual definitions of ($\Z_2$-graded commutative) sheaf, homogeneous morphism of sheaves, (graded commutative) tensor product, (graded commutative) derivation, etc. (\cite{BBH91,Ma88}).
Morphisms of  locally ringed superspaces are defined in a similar way as morphisms of ordinary locally ringed spaces:
\begin{defin} \label{def:mor}
A morphism of locally ringed superspaces is a pair $(f,f^\sharp)$, where $f\colon X \to Y$ is  a continuous map, and  $f^\sharp\colon \Oc_{\Ycal}\to f_\ast\Oc_{\Xcal}$ is a homogeneous morphism of graded commutative sheaves, such that for every point $x\in X$, the induced morphism of graded commutative rings $\Oc_{\Ycal,f(x)}\to \Oc_{\Xcal,x}$ is local, that is, it maps the maximal ideal of $\Oc_{\Ycal,f(x)}$ into the maximal ideal of $\Oc_{\Xcal,x}$. When no confusion can arise we denote a morphism of of locally ringed superspaces simply by $f\colon \Xcal \to \Ycal$.
\end{defin}

If $\Xcal=(X,\Oc_{\Xcal})$ is a locally ringed superspace and $U\subset X$ is an open subset, we say that $(U, \rest{{\Oc_{\Xcal}}}U)$ is an \emph{open locally ringed sub-superspace} of $\Xcal$.

\begin{defin} A morphism $f\colon \Xcal\to \Ycal$ of locally ringed superspaces is an \emph{open immersion} if is induces an isomorphism of $\Xcal$ with an open locally ringed sub-superspace of $\Ycal$. 
It is a \emph{closed immersion} if the underlying continuous map $X\to Y$ is a homeomorphism onto a closed subset of $Y$ and  $f^\sharp\colon \Oc_{\Ycal}\to f_\ast\Oc_{\Xcal}$ is surjective.
It  is an \emph{immersion} if it is the composition $f=i\circ g$ of a closed immersion $g$ with an open immersion $i$.
\end{defin}

Given a  locally ringed superspace $\Xcal=(X,\Oc_{\Xcal})$, we can consider the homogeneous ideal  $\Jc=(\Oc_{\Xcal})_1^2\oplus(\Oc_{\Xcal})_1$ generated by the odd elements. Then, $\Oc_X:=\Oc_{\Xcal}/\Jc$ is a (purely even) sheaf of $k$-algebras. We say that the locally ringed space $X=(X,\Oc_X)$ is the \emph{ordinary locally ringed space underlying} $\Xcal$.
A morphism $f\colon \Xcal \to \Ycal$ of locally ringed superspaces induces a morphism $f\colon X \to Y$ of the underlying locally ringed spaces, so that $\Xcal=(X,\Oc_{\Xcal}) \mapsto X=(X,\Oc_X)$ is a functor.

The sheaves $Gr^j(\Oc_{\Xcal})=\Jc^j/\Jc^{j+1}$ are annihilated by $\Jc$ so that they are $\Oc_X$-modules. Then we   can   consider the sheaf of $\Oc_X$-modules 
$$
Gr (\Oc_{\Xcal})=\bigoplus_{j\ge 0} Gr^j(\Oc_{\Xcal})=\bigoplus_{j\ge 0} \Jc^j/\Jc^{j+1}\,.
$$
which comes with a natural $\Z_2$ grading.

\begin{defin}\label{def:superscheme}
A superscheme is a locally ringed superspace $\Xcal=(X,\Oc_{\Xcal})$ such that 
\begin{enumerate}
\item the underlying ordinary locally ringed space $X=(X,\Oc_X)$ is a scheme, which we always assume to be locally of finite type over
$k$;
\item $Gr (\Oc_{\Xcal})$ is coherent as an $\Oc_X$-module;
\item  $\Oc_{\Xcal}$ is locally isomorphic, as a sheaf of $\Z_2$-graded commutative algebras, with $Gr (\Oc_{\Xcal})$,  compatibly with the projection $\Oc_{\Xcal}\to \Oc_X$.
\end{enumerate}
\end{defin}

Note that the second condition is equivalent to the fact that the sheaves $Gr^j(\Oc_{\Xcal})=\Jc^j/\Jc^{j+1}$, and in particular $\Ec=Gr^1(\Oc_{\Xcal})$, are coherent and that there are locally only finitely many of them. This implies that for every point $x\in X$ there exists a neighbourhood $U$ and an integer $n=n(x)$ such that $\rest{\Jc}U^{n+1}=0$.

\begin{example}
If $\Ec$ is a coherent sheaf on a scheme $X$, the exterior algebra $\bigwedge_{\Oc_X}(\Ec)$ defines a superscheme $(X, \bigwedge_{\Oc_X}(\Ec))$. In this case, there is an isomorphism of algebras $\bigwedge_{\Oc_X}(\Ec)\iso Gr(\Oc_{\Xcal})$.
\end{example}

Note that, in general, for every superscheme $\Xcal$ there is always a surjective morphism of $\Oc_X$-graded sheaves
$$
\textstyle{\bigwedge_{\Oc_X}(\Ec)} \to Gr(\Oc_{\Xcal})\to 0\,.
$$

A superscheme $\Xcal$ is said to be \emph{affine} if $X$ is affine. In this case, if $A=\Gamma(X,\Oc_X)$ and $\Ac=\Gamma(X,\Oc_\Xcal)$, one has $X=\Spec A$, and $\Oc_{\Xcal}$ is the sheaf associated to the presheaf of localizations of the graded commutative ring $\Ac$.

\begin{defin}\label{def:supperscheme}
A superscheme $\Xcal=(X,\Oc_{\Xcal})$ is \emph{split} if it is isomorphic to the superscheme $(X, \bigwedge_{\Oc_X}(\Ec))$ (compatibly with the projection $\Oc_{\Xcal}\to\Oc_X$) and moreover $\Ec$ is \emph{locally free} as an $\Oc_X$-module.
A superscheme is \emph{locally split} if it can be covered by open sub-superschemes that are split.
\end{defin}

\begin{defin} A locally split superscheme $\Xcal=(X,\Oc_{\Xcal})$ has dimension $(m,n)$ if the scheme $X$ has dimension $m$ and the rank of $\Ec$ is $n$.
\end{defin}

\begin{remark}In most applications, all   superschemes considered are locally split, or locally fermionic trivial as they are also called by some authors. From Subsection \ref{supercurves}  on, all   superschemes will be assumed to be locally split. 
\end{remark}

\begin{prop}\label{prop:curvesplit}
A locally split superscheme $\Xcal=(X,\Oc_{\Xcal})$ of odd dimension $n=1$ is canonically split.
\end{prop}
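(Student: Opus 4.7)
My plan is to exploit the fact that having odd dimension $1$ forces the nilpotent ideal to have the simplest possible structure, and thereby to construct an intrinsic (choice-free) splitting.

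\textbf{Step 1: Square of the odd part vanishes.} Since $\Xcal$ is locally split of odd dimension $1$, every point has a neighbourhood on which $\Oc_{\Xcal}$ is isomorphic to $\Oc_X \oplus \Oc_X\theta$ with a single odd generator $\theta$. Any two odd local sections are of the form $f\theta$, $g\theta$, and so their product is $fg\theta^2$. By graded commutativity and $\operatorname{char} k \neq 2$, $\theta^2 = 0$. Therefore $(\Oc_{\Xcal})_1 \cdot (\Oc_{\Xcal})_1 = 0$ locally, hence globally as a statement about sheaves. Consequently the ideal $\Jc = (\Oc_{\Xcal})_1^2 \oplus (\Oc_{\Xcal})_1$ coincides with the odd part, $\Jc = (\Oc_{\Xcal})_1$, and $\Jc^2 = 0$.

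\textbf{Step 2: Canonical splitting as $\Oc_X$-modules.} Because $\Jc$ is purely odd, $\Jc \cap (\Oc_{\Xcal})_0 = 0$, so the projection $\Oc_{\Xcal} \to \Oc_X = \Oc_{\Xcal}/\Jc$ restricts to an isomorphism $(\Oc_{\Xcal})_0 \iso \Oc_X$ of sheaves of $k$-algebras. Composing its inverse with the inclusion $(\Oc_{\Xcal})_0 \hookrightarrow \Oc_{\Xcal}$ yields a canonical algebra map $s : \Oc_X \to \Oc_{\Xcal}$ splitting the projection; no choices are involved. Via $s$, the sheaf $\Ec := (\Oc_{\Xcal})_1$ becomes a sheaf of $\Oc_X$-modules, and we have a canonical decomposition $\Oc_{\Xcal} \simeq \Oc_X \oplus \Ec$ as $\Z_2$-graded $\Oc_X$-modules.

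\textbf{Step 3: The algebra structure is the exterior algebra.} The multiplication on $\Oc_{\Xcal}$ is determined by the three components $\Oc_X \otimes \Oc_X \to \Oc_X$ (the given one), $\Oc_X \otimes \Ec \to \Ec$ (the $\Oc_X$-action just described), and $\Ec \otimes \Ec \to (\Oc_{\Xcal})_0 \simeq \Oc_X$; by Step~1 the last product vanishes. Therefore the algebra structure coincides with that of $\bigwedge_{\Oc_X}\Ec = \Oc_X \oplus \Ec$. Finally, the locally split hypothesis forces $\Ec$ to be locally free of rank $1$ (its restriction to each split neighbourhood is $\Oc_X \cdot \theta$), so $(X, \bigwedge_{\Oc_X}\Ec)$ is a genuine split superscheme and the isomorphism produced above is compatible with the projection onto $\Oc_X$, which is exactly what the statement requires.

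The main subtlety is not a hard obstruction but rather the bookkeeping in Step~2: one must verify that the $\Oc_X$-module structure induced on $\Ec$ via $s$ agrees with the original $(\Oc_{\Xcal})_0$-module structure modulo $\Jc$. This is automatic since $\Jc \cdot \Ec \subseteq \Jc^2 = 0$. Everything else is a direct consequence of the rank-one odd dimension, so no cohomological obstruction (of the kind that generally obstructs splitting in higher odd dimension) can appear.
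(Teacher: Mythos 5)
Your proof is correct and follows essentially the same route as the paper: both arguments rest on the observation that in odd dimension one the odd part squares to zero, so the projection $\Oc_\Xcal\to\Oc_X$ restricts to an isomorphism on the even part and the odd part is an invertible $\Oc_X$-module, yielding the canonical identification $\Oc_\Xcal\simeq\Oc_X\oplus\Lcl^\Pi\iso\bigwedge_{\Oc_X}(\Lcl)$. Your version merely spells out in more detail the verification that the resulting algebra structure is that of the exterior algebra.
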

\begin{proof}
The epimorphism $\iota_\sharp\colon \Oc_\Xcal\to \Oc_X$ induces an isomorphism $(\Oc_\Xcal)_0\iso \Oc_X$ between the even part of $\Oc_\Xcal$ and $\Oc_X$. Since $n=1$, the sheaf $\Lcl=\Jc/\Jc^2=\Jc$ is an invertible sheaf on $X$ and $(\Oc_\Xcal)_1=\Lcl^\Pi$, where $\Pi$ denotes the parity change fucntor. One then has
$$
\Oc_\Xcal\simeq \Oc_X\oplus\Lcl^\Pi\iso \textstyle{\bigwedge_{\Oc_X}}(\Lcl)\,.
$$
\end{proof}

We shall only consider sheaves of $\Oc_{\Xcal}$-modules  which are $\Z_2$-graded. Morphisms of sheaves of (graded) $\Oc_{\Xcal}$-modules are supposed to be homogeneous of degree $0$, unless some other condition is explicitly stated.

A free $\Oc_{\Xcal}$-module of rank $(p,q)$ is a graded $\Oc_{\Xcal}$-module that is 
isomorphic to  the sheaf $\Oc_{\Xcal}^{(p,q)}=\Oc_{\Xcal}^p\oplus(\Oc_\Xcal^q)^\Pi$ freely generated by $p$ even and $q$ odd elements. Similar definitions can be given for $\Oc_{\Xcal}^{(I,J)}$ for arbitrary sets $I$, $J$. A sheaf $\Nc$ of $\Oc_{\Xcal}$-modules  is of finite type if there exists a (homogeneous of degree $0$) epimorphism $\Oc_{\Xcal}^{(p,q)}\to \Nc\to 0$ for some pair $(p,q)$.

The usual notions of quasi-coherent and coherent sheaves apply also here. Note that, due to the hypotheses in Definition \ref{def:superscheme}, coherent sheaves are locally   finite type quasi-coherent sheaves.

For every superscheme $\Xcal$, the surjection $\Oc_{\Xcal} \to \Oc_X=\Oc_{\Xcal}/\Jc$ induces a closed immersion
$$
i\colon X \hookrightarrow \Xcal
$$ 
of superschemes, defined by the (graded) ideal $\Jc$. We say that $\Xcal$ is \emph{projected} if there is a morphism $\rho\colon \Xcal \to X$ such that $\rho\circ i$ is the identity. In particular, split superschemes are projected.

The following result will be useful.
\begin{lemma}\label{lem:vanishX}
 Let $\Nc$ be a quasi-coherent $\Oc_{\Xcal}$-module. If $i^\ast\Nc=0$, then $\Nc=0$.
\end{lemma}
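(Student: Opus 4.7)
The plan is to use that $i^\ast\Nc$ is, by definition of the pull-back along the closed immersion defined by $\Jc$, the quotient $\Nc/\Jc\Nc$. So the hypothesis $i^\ast\Nc=0$ translates into the equality $\Nc=\Jc\Nc$ of $\Oc_\Xcal$-submodules of $\Nc$.

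From here I would iterate: applying $\Jc\cdot$ to both sides gives $\Jc\Nc=\Jc^2\Nc$, hence $\Nc=\Jc^2\Nc$. An obvious induction yields
\[
\Nc=\Jc^k\Nc \qquad \text{for every } k\geq 1.
\]
The key input to conclude is the local nilpotence of $\Jc$ already noted after Definition~\ref{def:superscheme}: for every $x\in X$ there is an open neighbourhood $U$ of $x$ and an integer $n=n(x)$ with $\rest{\Jc}{U}^{\,n+1}=0$. Restricting the chain of equalities above to such a $U$ and taking $k=n+1$ gives $\rest{\Nc}{U}=0$. Vanishing of a sheaf is a local property, so covering $X$ by such neighbourhoods we conclude $\Nc=0$.

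There is essentially no obstacle, because all the substantive content has already been packaged into the coherence assumption in Definition~\ref{def:superscheme}, which is precisely what guarantees the local nilpotence of $\Jc$. The only point that deserves a line of justification is that the identities $\Jc^k\Nc=\Jc(\Jc^{k-1}\Nc)$ really hold as $\Oc_\Xcal$-submodules of the quasi-coherent sheaf $\Nc$; this is routine since on any affine open these are just the usual products of an ideal with a module, and quasi-coherence ensures that the sheafified identity is determined by its behaviour on affines. Hence the argument is a one-line Nakayama-type argument powered by local nilpotence of the odd ideal.
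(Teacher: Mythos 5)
Your proposal is correct and is essentially identical to the paper's proof: both translate $i^\ast\Nc=\Nc\otimes_{\Oc_\Xcal}\Oc_X=0$ into $\Nc=\Jc\cdot\Nc$, iterate to get $\Nc=\Jc^{k}\cdot\Nc$ for all $k$, and conclude by the local nilpotence of $\Jc$ noted after Definition~\ref{def:superscheme}.
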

\begin{proof} Since $0=i^\ast\Nc=\Nc\otimes_{\Oc_{\Xcal}}\Oc_X$, one has $\Nc=\Jc\cdot\Nc$, and then $\Nc=\Jc^{n+1}\cdot\Nc$ for every $n$. Then $\Nc=0$ because $X$ can be covered by open subschemes $U$ such that there exist integers $n(U)$ verifying $\rest{\Jc}{U}^{n(U)+1}=0$.
\end{proof}

Given morphisms $f\colon \Xcal\to \Sc$, $g\colon \Zc \to \Sc$, a \emph{fibre product} $f\times g\colon \Xcal\times_\Sc \Zc \to \Sc$ exists, together with two projections $p_1\colon \Xcal\times_\Sc \Zc \to \Xcal$, $p_2\colon \Xcal\times_\Sc \Zc\to \Zc$. For affine superschemes given by graded commutative algebras $\Ac=\Gamma(X,\Oc_\Xcal)$,  $\Bc=\Gamma(S,\Oc_\Sc)$, $\Cc=\Gamma(Z,\Oc_\Zc)$, $\Xcal\times_\Sc \Zc$ is the affine superscheme defined by $\Ac\otimes_\Bc \Cc$ with its natural $\Z_2$-grading.

For every morphism $f\colon \Xcal\to \Sc$ of superschemes there exists a diagonal morphism $\delta_f\colon \Xcal \hookrightarrow \Xcal\times_\Sc \Xcal$. If $V\subseteq S$ is an affine open subscheme of $S$ and $U\subseteq f^{-1}(V)$ is an affine subscheme of $X$, and $\Vc$, $\U$ are the induced open sub-superschemes, the diagonal of $f\colon \U\to \Vc$ is defined by the kernel $\Delta_{\U/\Vc}$ of the product $\Oc_{\Xcal}(U)\otimes_{\Oc_{\Sc}(V)}\Oc_{\Xcal}(U) \to \Oc_{\Xcal}(U)$. In this way one defines a sheaf $\Delta_f$ of graded ideals on $\Xcal\times_\Sc \Xcal$, called the \emph{diagonal ideal} of $f\colon \Xcal\to \Sc$. 
The quotient $\Delta_f/\Delta_f^2$ is supported on the diagonal, i.e., $\Delta_f/\Delta_f^2\simeq \delta_{f,\ast}\delta_f^\ast (\Delta_f/\Delta_f^2)$.

\begin{defin}
The relative cotangent sheaf to $f\colon \Xcal\to \Sc$ is the sheaf
$$
\Omega_{\Xcal/\Sc}=\delta_f^\ast(\Delta_f/\Delta_f^2)\,.
$$
\end{defin}

The local sections of $\Omega_{\Xcal/\Sc}$ are called ($\Z_2$-graded) relative differentials. On affine open sub-superschemes they are generated by $df$ where $f$ is an homogenous section of $\Oc_{\Xcal}$.
As for ordinary schemes, there is a derivation (of degree $0$) over $\Oc_{\Sc}$:
\begin{equation}
\begin{aligned}
\Oc_{\Xcal} & \xrightarrow{d} \Omega_{\Xcal/\Sc} \\
f & \mapsto df=[f\otimes 1-1\otimes f]
\end{aligned}
\end{equation}
(where square brackets denote restriction to the diagonal) and  $\Omega_{\Xcal/\Sc}$ has a universal property with respect to   graded derivations. In particular, there is an isomorphism of (graded) $\Oc_{\Xcal}$-modules
$$
\Omega_{\Xcal/\Sc}^\ast \iso \Der_{\Oc_{\Sc}}(\Oc_{\Xcal})\,.
$$
This sheaf is called the \emph{relative tangent sheaf} of $f\colon \Xcal\to \Sc$ and  is denoted  $\Theta_{\Xcal/\Sc}$.

If one considers the structural morphism $\Xcal \to\Spec k$ of a $k$-superscheme $\Xcal$  the absolute notions are obtained. Then, the \emph{cotangent sheaf} of a superscheme $\Xcal$ is the sheaf $\Omega_{\Xcal}$ of ($\Z_2$-graded) differentials  of  $\Xcal \to\Spec k$, and $\Theta_\Xcal=\Der(\Oc_{\Xcal})\iso \Omega_{\Xcal}^\ast$ is the \emph{tangent sheaf}.
For every point $x\in X$ (closed or not), the \emph{tangent space to $\Xcal$ at $x$} is the graded vector space
$$
\Theta_{\Xcal,x}=\Der_k(\Oc_{\Xcal,x}, k(x))\,,
$$
where $k(x)$ is the residue field of the local ring $\Oc_{X,c}$, that is, $\Theta_{\Xcal,x}$ is the fibre of the tangent sheaf at $x$. One has $\Theta_{\Xcal,x}=\Der_k(\Oc_{\Xcal,x}, k)$ when $x$ is a closed point.\footnote{There is a notational inconsistency here, as $\Theta_{\Xcal,x}$ is not the stalk $\Der(\Oc_{\Xcal})_x= \Der(\Oc_{\Xcal,x})$ but rather its fibre $\Der(\Oc_{\Xcal,x})\otimes_{\Oc_{X,x}}k(x)$. We keep this inconsistency for historical reasons.}

Let us consider the superscheme $\Spec k[\epsilon_0,\epsilon_1]$, defined as $(\Spec k, k[\epsilon_0,\epsilon_1])$ where $\epsilon_i$ has parity $i$ and
$\epsilon_0^2=\epsilon_1^2=\epsilon_0\epsilon_1=0$.

If $x$ is a closed point, a ``super tangent vector'' $v_x\in \Theta_{\Xcal,x}$ defines a morphism $\phi(v_x)\colon \Oc_{\Xcal,x} \to k[\epsilon_0,\epsilon_1]$ of graded $k$-algebras, given by $\phi(v_x)(a)=\bar a +(v_x)_0(a)\epsilon_0+(v_x)_1(a)\epsilon_1$. Conversely, any morphism $\phi\colon \Oc_{\Xcal,x} \to k[\epsilon_0,\epsilon_1]$ of graded $k$-algebras defines a supertangent vector $v_x$ such that $\phi=\phi(v_x)$. That is, there is an isomorphism
\begin{equation}\label{stvectors}
\Hom_{x}(\Spec k[\epsilon_0,\epsilon_1], \Xcal)\iso \Theta_{\Xcal,x}
\end{equation}
of graded $k$-vector spaces, where the first member denotes the morphisms $\Spec k[\epsilon_0,\epsilon_1]\to \Xcal$ centred on $x$, that is, which map the single point of $\Spec k[\epsilon_0,\epsilon_1]$ to $x$.

Let us write
\begin{equation}\label{eq:epsilon}
\Xcal[\epsilon_0,\epsilon_1]=\Xcal\times \Spec k[\epsilon_0,\epsilon_1]\,,
\end{equation}
for every supescheme $\Xcal$. Proceeding as above one can describe the sections of the restriction $\Theta_{\Xcal\vert X}$ of the tangent sheaf to the underlying scheme. Namely:
\begin{prop}\label{p:tangentdefor}
 For every affine open subscheme $U\hookrightarrow X$, the space
$\Gamma(U, \Theta_{\Xcal\vert X})=\Der(\Oc_\Xcal(U),\Oc_X(U))$ is identified with the subspace of the elements in $\Hom(U[\epsilon_0,\epsilon_1],\Xcal)$ that are extensions of the natural immersion $U\hookrightarrow\Xcal$.

More generally, if $f\colon\Ycal\to\Xcal$ is a morphism of superschemes, for every affine open subscheme $V\hookrightarrow Y$ the space $\Gamma(V, f^\ast\Theta_\Xcal)$ of sections of the pullback of the tangent sheaf is identified with the subspace of   elements in $\Hom(\Ycal_V[\epsilon_0,\epsilon_1],\Xcal)$ that are extensions of $\Ycal_V\hookrightarrow\Ycal\xrightarrow{f}\Xcal$.
\qed
\end{prop}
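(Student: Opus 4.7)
The approach is to mimic the construction already described for supertangent vectors at a closed point in \eqref{stvectors}, but at the level of sections over an affine open. The key idea is that giving an extension of the inclusion $U\hookrightarrow\Xcal$ to a morphism $U[\epsilon_0,\epsilon_1]\to\Xcal$ is the same as giving, at the level of rings, a $\Z_2$-graded $k$-algebra homomorphism $\psi\colon\Oc_\Xcal(U)\to\Oc_X(U)\otimes_k k[\epsilon_0,\epsilon_1]$ whose composition with $\epsilon_i\mapsto 0$ is the surjection $\Oc_\Xcal(U)\to\Oc_X(U)$. Any such $\psi$ must have the form $\psi(a)=\bar a+\epsilon_0\alpha(a)+\epsilon_1\beta(a)$ for $k$-linear maps $\alpha,\beta\colon\Oc_\Xcal(U)\to\Oc_X(U)$ of parity $0$ and $1$ respectively (the parities being forced by the homogeneity of $\psi$ together with the parities of $\epsilon_0,\epsilon_1$).

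First, given $D=D_0+D_1\in\Der_k(\Oc_\Xcal(U),\Oc_X(U))$ I will set $\phi_D(a)=\bar a+\epsilon_0 D_0(a)+\epsilon_1 D_1(a)$ and verify that $\phi_D$ is a $\Z_2$-graded algebra homomorphism. Since $U$ is affine this data lifts to a morphism of superschemes $U[\epsilon_0,\epsilon_1]\to\U\hookrightarrow\Xcal$ extending $U\hookrightarrow\Xcal$. The verification that $\phi_D(ab)=\phi_D(a)\phi_D(b)$ uses $\epsilon_0^2=\epsilon_1^2=\epsilon_0\epsilon_1=0$ together with the super-commutation $\epsilon_1\bar a=(-1)^{|a|}\bar a\epsilon_1$; with $\epsilon_i$ written on the \emph{left} of the derivation values, the cross-term in $\epsilon_1$ reproduces exactly the super-Leibniz rule $D_1(ab)=D_1(a)\bar b+(-1)^{|a|}\bar a D_1(b)$, while the cross-term in $\epsilon_0$ reproduces the ordinary Leibniz rule for $D_0$. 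Conversely, writing an arbitrary extension $\psi$ in the form above and imposing $\psi(ab)=\psi(a)\psi(b)$ forces $\alpha$ and $\beta$ to satisfy precisely the even and odd Leibniz rules, so that $D:=\alpha+\beta$ is a graded derivation. These two assignments are manifestly inverse to each other, proving the first statement.

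For the relative version I will use the fact that sections of the pullback $f^\ast\Theta_\Xcal$ over an affine $V\hookrightarrow Y$ are identified with $\Der_k(\Oc_\Xcal(U),\Oc_\Ycal(V))$, where $U$ is any affine of $X$ containing $f(V)$ and $\Oc_\Ycal(V)$ is regarded as an $\Oc_\Xcal(U)$-module via $f^\sharp$; this is the standard identification, valid because $\Theta_\Xcal$ is locally free on the (locally split) superscheme $\Xcal$. The same recipe as above, with $\phi_D(a)=f^\sharp(a)+\epsilon_0 D_0(a)+\epsilon_1 D_1(a)$, produces from a derivation an element of $\Hom(\Ycal_V[\epsilon_0,\epsilon_1],\Xcal)$ extending $\Ycal_V\to\Xcal$, and the inverse construction is identical. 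The main technical care needed is (i) choosing $U\subseteq X$ so that $f(V)\subseteq U$, which is possible after possibly shrinking $V$ and then gluing—this is harmless since both sides are sheaves on $Y$ and $V$ is affine—and (ii) keeping track of the super-sign convention in the Leibniz rule, which is the one subtle point of the argument; the choice of putting $\epsilon_0,\epsilon_1$ on the left of the derivation values makes the signs come out correctly without any auxiliary factor of $(-1)^{|a|}$.
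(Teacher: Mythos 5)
Your proposal is correct and follows exactly the route the paper intends: it transports the pointwise identification \eqref{stvectors} to sections over an affine open by writing an extension as $a\mapsto \bar a+\epsilon_0\alpha(a)+\epsilon_1\beta(a)$ and matching the multiplicativity condition with the (super-)Leibniz rules, which is precisely the ``proceeding as above'' the paper leaves to the reader. Your sign bookkeeping (placing $\epsilon_0,\epsilon_1$ on the left) and the reduction of the relative case to affines $V$ with $f(V)$ inside an affine $U$, followed by gluing, are both sound.
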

When $\Ycal=\Spec k$ is a single closed point of $\Xcal$, we recover Equation \eqref{stvectors}.

\begin{defin}\label{def:smooth}
A superscheme $\Xcal=(X,\Oc_{\Xcal})$ is smooth of dimension $(m,n)$ if
\begin{enumerate}
\item every irreducible component of $X$ has dimension $m$, and
\item for every point $x\in X$ (not necessarily closed), the stalk $\Omega_{\Xcal,x}$ of the cotangent sheaf at $x$ is a free $\Oc_{\Xcal,x}$-module of rank $(m,n)$.
\end{enumerate}
\end{defin}

\begin{prop}
Let  $\Xcal=(X,\Oc_{\Xcal})$ be a superscheme,
and $\Ec=Gr^1(\Oc_{\Xcal})= \Jc/\Jc^2$. Then $\Xcal$ is smooth of dimension $(m,n)$ if and only if
\begin{enumerate}
\item it is locally split and $n=\rk \Ec$, and 
\item 
$X$ is a smooth scheme of dimension $m$.
\end{enumerate}
Then, if $\Xcal$ is smooth, for  every closed point $x\in X$ there are \emph{graded local coordinates}, that is, $m$ even functions $(z_1,\dots,z_m)$ in the maximal ideal $\mathfrak m_x$ of $\Oc_{X,x}$ and $n$  odd functions $(\theta_1,\dots,\theta_n)$ generating $\Ec_x$, such that $(dz_1,\dots,dz_m,d\theta_1,\dots,d\theta_n)$ is a   basis for $\Omega_{\Xcal,x}$.
\end{prop}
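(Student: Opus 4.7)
My plan is to prove the two implications of the equivalence separately; the existence of graded local coordinates will emerge from the harder direction $(\Rightarrow)$. For $(\Leftarrow)$, assume $\Xcal$ is locally split with $\Ec$ locally free of rank $n$ and $X$ smooth of dimension $m$. On a split affine open $U$ on which $\Ec$ admits an odd basis $\theta_1,\dots,\theta_n$ and $X$ has local coordinates $z_1,\dots,z_m$, one has $\Oc_\Xcal|_U\cong\bigwedge_{\Oc_X|_U}(\Oc_U\theta_1\oplus\dots\oplus\Oc_U\theta_n)$, a graded polynomial algebra over $\Oc_X|_U$ with $m$ even and $n$ odd generators. A direct application of the universal property of Kähler differentials, together with the graded Leibniz rule, shows that $\Omega_{\Xcal}|_U$ is free over $\Oc_\Xcal|_U$ with homogeneous basis $(dz_i,d\theta_j)$ of graded rank $(m,n)$.

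For $(\Rightarrow)$, assume $\Xcal$ is smooth of dimension $(m,n)$. I exploit the conormal exact sequence of the closed immersion $i\colon X\hookrightarrow\Xcal$ defined by the ideal $\Jc$:
$$
\Ec=\Jc/\Jc^2 \xrightarrow{d} i^\ast\Omega_\Xcal \to \Omega_X \to 0.
$$
Since $\Jc$ is generated by odd sections and the squares of odd elements of $\Oc_\Xcal$ already lie in $\Jc^2$, the even part of $\Jc/\Jc^2$ vanishes, so $\Ec$ is purely odd; meanwhile $\Omega_X$ is purely even. Splitting the sequence into graded parts yields an isomorphism $(i^\ast\Omega_\Xcal)_0\cong\Omega_X$ and a surjection $\Ec\twoheadrightarrow(i^\ast\Omega_\Xcal)_1$. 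Freeness of $\Omega_\Xcal$ of rank $(m,n)$ makes $\Omega_X$ locally free of rank $m$; combined with $\dim X=m$ this proves $X$ smooth of dimension $m$. At any point $x$, the graded super-Zariski cotangent space $\Omega_{\Xcal,x}\otimes k(x)\cong\mathfrak m_{\Xcal,x}/\mathfrak m_{\Xcal,x}^2$ has graded dimension $(m,n)$ with odd part canonically identified with $\Ec\otimes k(x)$; hence $\Ec$ has constant fibre dimension $n$ and is locally free of rank $n$ on the reduced scheme $X$.

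For the local splitting, lift a graded basis of the fibre at a closed point $x$ to even $z_i\in\mathfrak m_{X,x}$ (which then form a regular system of parameters for $\Oc_{X,x}$) and odd $\theta_j\in\Jc_x$. By the graded Nakayama lemma, $(dz_i,d\theta_j)$ is an $\Oc_{\Xcal,x}$-basis of $\Omega_{\Xcal,x}$. Shrink to a neighbourhood $U$ on which $z_i,\theta_j$ are defined and on which the isomorphism $\Oc_\Xcal|_U\cong Gr(\Oc_\Xcal)|_U$ from condition (3) of Definition~\ref{def:superscheme} holds; the natural surjection $\bigwedge_{\Oc_X|_U}\Ec|_U\twoheadrightarrow Gr(\Oc_\Xcal)|_U$ is then a map of graded $\Oc_X$-modules whose $k$-th piece has fibre dimension $\binom{n}{k}$ on both sides, hence an isomorphism on the reduced $X$. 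This yields the split description $\Oc_\Xcal|_U\cong\bigwedge_{\Oc_X|_U}\Ec|_U$ and the claimed graded local coordinates. The step I expect to be the main obstacle is precisely the verification that $Gr^k(\Oc_\Xcal)$ has constant fibre dimension $\binom{n}{k}$: this amounts to a Cohen-type structure statement propagating smoothness from the first graded piece to all higher graded pieces of the local super-ring $\Oc_{\Xcal,x}$, and without it the rank comparison that forces injectivity of the exterior-algebra map would fail.
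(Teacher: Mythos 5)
Your argument has a genuine gap, and you have located it yourself: everything hinges on the claim that $Gr^k(\Oc_{\Xcal})$ has constant fibre dimension $\binom{n}{k}$, which you defer and never establish. That claim is not an auxiliary ``Cohen-type'' fact you may quote --- at the level of fibres it is \emph{equivalent} to the injectivity of the canonical surjection $\bigwedge_{\Oc_X}\Ec\to Gr(\Oc_{\Xcal})$ that you are trying to prove, so your rank comparison is circular. The definition of a superscheme only gives you this map as an epimorphism; a priori $Gr^k(\Oc_{\Xcal})$ could be a proper quotient of $\bigwedge^k\Ec$ even when $\Omega_{\Xcal}$ is free of rank $(m,n)$, unless one actually uses the freeness to rule this out. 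The paper does exactly that, by induction on $n$: choosing $(z_i,\theta_j)$ with $(dz_i,d\theta_j)$ a free basis of $\Omega_{\Xcal,x}$, it takes a putative relation, reduces modulo $\theta_n$ (where the inductive hypothesis applies to the sub-superscheme cut out by $\theta_n$) to write any element of the kernel as $\omega=\sum a_{J}\,\theta_{j_1}\wedge\dots\wedge\theta_{j_{i-1}}\wedge\theta_n$, then differentiates the corresponding identity $0=\sum a_{J}\,\theta_{j_1}\cdots\theta_{j_{i-1}}\theta_n$ in $\Oc_{\Xcal}$ and extracts the coefficient of $d\theta_n$ to conclude $\sum a_{J}\,\theta_{j_1}\cdots\theta_{j_{i-1}}=0$, whence $\omega=0$ by induction. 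Some such argument is indispensable; without it your proof of the forward implication (and hence of the existence of graded local coordinates) is incomplete.

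The rest of your proposal is sound and close to the paper's route. The $(\Leftarrow)$ direction via the split exact sequence $\Omega_X\otimes_{\Oc_X}\Oc_{\Xcal}\to\Omega_{\Xcal}\to\Omega_{\Xcal/X}\to 0$ is the paper's argument. In the $(\Rightarrow)$ direction your identification of the odd part of the super-Zariski cotangent space $\mathfrak m_{\Xcal,x}/\mathfrak m_{\Xcal,x}^2$ with $\Ec\otimes k(x)$ is a legitimate (and slightly more self-contained) variant of the paper's use of the conormal sequence together with freeness of $\Omega_{\Xcal}$ to get $\Ec\iso\Omega_{\Xcal,1}\otimes_{\Oc_{\Xcal}}\Oc_X$; either way one obtains $\Omega_X$ locally free of rank $m$ and $\Ec$ locally free of rank $n$. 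But that only controls $Gr^1$; the passage to the higher graded pieces is the heart of the proof and must be supplied as above.
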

\begin{proof} Assume that the two conditions hold. Since the question is local, we can suppose that $\Oc_{\Xcal}=\Oc_X[\theta_1,\dots,\theta_n]$ where the $\theta_i$'s are odd and generate $\Ec$. Then $\Omega_{\Xcal/X}\simeq \Oc_X[d\theta_1,\dots,d\theta_n]$ and the exact sequence of $\Oc_{\Xcal}$-modules
$$
\Omega_X\otimes_{\Oc_X}\Oc_{\Xcal}\to \Omega_{\Xcal}\to \Omega_{\Xcal/X}\to 0\,,
$$
is split. It follows that
$$
\Omega_{\Xcal}\iso (\Omega_X\otimes_{\Oc_X}\Oc_{\Xcal}) \oplus \Oc_X[d\theta_1,\dots,d\theta_n]\,.
$$ 
Since $X$ is smooth of dimension $m$, $\Omega_{X,x}$ is free of rank $m$ for every point $x$, and then the two conditions in Definition \ref{def:smooth} are fulfilled.

For the converse, the question is again local, so we can also assume that $X$ is the spectrum of a local ring, which we still denote by $\Oc_X$. The exact sequence induced by $i\colon X\hookrightarrow \Xcal$ gives an exact sequence
$$
\Ec=\Jc/\Jc^2\xrightarrow{\delta} \Omega_{\Xcal}\otimes_{\Oc_{\Xcal}}\Oc_X \to \Omega_X \to 0
$$
where $\delta(\theta)$ is the class modulo $\Jc$ of $d\theta$. Decomposition into even and odd parts gives
$$
\begin{aligned}
&\Omega_{\Xcal,0}\otimes_{\Oc_{\Xcal}}\Oc_X\iso \Omega_X\\
\Ec\iso  &\Omega_{\Xcal,1}\otimes_{\Oc_{\Xcal}}\Oc_X\,,
\end{aligned}
$$
where the second equality  holds as $\Omega_{\Xcal}$ is free. 
This proves that $\Ec$ and $\Omega_X$ are free $\Oc_X$-modules of rank $n$ and $m$. In particular $X$ is smooth as an ordinary scheme (see, for instance, \cite{Hart77}). We only have to prove that the natural epimorphism  $\bigwedge_{\Oc_X}\Ec\to \Oc_{\Xcal}=Gr (\Oc_{\Xcal})$ is an isomorphism. We proceed by induction on $n$, the case $n=0$ being trivial. Take $(z_1,\dots,z_m)$ in the maximal ideal $\mathfrak m$ or $\Oc_X$ and  $(\theta_1,\dots,\theta_n)$ in $\Ec$ such that $(dz_1,\dots,dz_m,d\theta_1,\dots,d\theta_n)$ is a  basis of $\Omega_{\Xcal}$. If $\Sc$ is the closed sub-superscheme defined by $\theta_m$, $\Omega_{\Sc}$ is a free $\Oc_{\Sc}/(\theta_n)$ module generated by $(dz_1,\dots,dz_m,d\theta_1,\dots,d\theta_{n-1})$; if $\Ec'=\Ec/\langle\theta_n\rangle$, one has $\bigwedge_{\Oc_X}\Ec'\iso \Oc_{\Sc}$, by induction. Now, if  an $i$-form $\omega$ is in the kernel of $\bigwedge_{\Oc_X}\Ec\to \Oc_{\Xcal}=Gr (\Oc_{\Xcal})$, reducing modulo $\theta_n$ and applying the above isomorphism we see that $\omega=\sum a_{j_1\dots j_{i-1}}\theta_{j_1}\wedge\dots\wedge\theta_{j_{i-1}} \wedge\theta_n$, where the sum runs on  $1\leq j_1<\dots<j_{i-1}\leq n-1$ and the coefficients are in $\Oc_X$. Taking the differentials on $0=\sum a_{j_1\dots,j_{i-1}}\theta_{j_1}\cdot\dots\cdot\theta_{j_{i-1}} \cdot\theta_n$ and using that $(dz_1,\dots,dz_m,d\theta_1,\dots,d\theta_n)$ is a  basis of $\Omega_{\Xcal}$, we obtain that the coefficient of $d\theta_n$ is zero, that is, $0=\sum a_{j_1\dots,j_{i-1}}\theta_{j_1}\cdot\dots\cdot\theta_{j_{i-1}}$. But then, $\sum a_{j_1\dots j_{i-1}}\theta_{j_1}\wedge\dots\wedge\theta_{j_{i-1}}$ is in the kernel of $\bigwedge_{\Oc_X}\Ec'\iso \Oc_{\Sc}$, so that it is is zero as well and $\omega=0$.
\end{proof}

If $\Xcal$ is a smooth superschemes, it is customary to denote by $\Omega_+\Xcal$ and $\Omega_-\Xcal$ the even an odd parts of the restriction $\Omega_{\Xcal\vert X}=i^\ast\Omega_\Xcal$ of $\Omega_\Xcal$ to the underlying scheme, and similarly, by $\Theta_+\Xcal$ and $\Theta_-\Xcal$ the even and odd parts of the restriction of the tangent sheaf $\Theta_\Xcal$, and to refer to them as   the \emph{even or the odd cotangent or tangent sheaves} to $\Xcal$. The above proof shows that 
\begin{equation}\label{eq:bostangent}
\begin{aligned}
&\Omega_+\Xcal\iso \Omega_X\,,\quad &&\Ec^\Pi\iso\Omega_-\Xcal
\\
&\Theta_X\iso \Theta_+\Xcal\,,\quad &&\Theta_-\Xcal\iso\Ec^{\ast\Pi}\,,
\end{aligned}
\end{equation}
where we use the functor $\Pi$ to keep track of the parity.
This means that $\Xcal$ \emph{is locally determined by either its odd tangent or cotangent sheaf}. We shall exploit this property in the determination of the local fermionic structure of the supermoduli.

\subsection{Morphisms of superschemes}

We only consider morphisms  $f\colon \Xcal \to \Sc$ of superschemes that are \emph{locally of finite type}. Since our superschemes are locally of finite type as well, all morphisms are automatically locally of finite presentation.

Many of the notions of the different types of morphisms are defined mimicking the corresponding definitions for schemes, and no further explanation is needed.

The \emph{fibre} of a morphism $f\colon \Xcal \to \Sc$ of superschemes over a point $s\in S$ is the superscheme $\Xcal_s=\Xcal\times_\Sc \Spec \kappa(s)$, where $\kappa(s)$ is the residue field of $s\in S$.

Whenever one has a morphism of superschemes $f\colon \Xcal \to \Sc$, we also say that it is a \emph{relative superscheme} or that $\Xcal$ is \emph{superscheme over $\Sc$}.


%

\begin{prop}\label{p:flat}
Let $f\colon \Xcal\to\Sc$ be a morphism of superschemes.
\begin{enumerate}
\item If $f\colon \Xcal\to\Sc$ is flat (resp. faithfully flat), then the induced morphism $f\colon X\to S$ between the underlying ordinary schemes is also flat (resp. faithfully flat).
\item $f\colon \Xcal\to\Sc$
is faithfully flat if and only if it is flat and for every quasi-coherent sheaf $\Nc$ is on $\Sc$, the condition $f^\ast\Nc=0$ implies that $\Nc=0$.
\end{enumerate}
\end{prop}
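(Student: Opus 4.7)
My plan is to reduce each part to a stalk-level statement in super-commutative algebra.

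\emph{Part (1), flat case.} I would first base-change $f$ along the closed immersion $i_S\colon S\hookrightarrow\Sc$. Since flatness is preserved by base change, $f_S\colon\Xcal\times_\Sc S\to S$ is flat. The underlying-scheme functor $\Xcal\mapsto X$ is right-adjoint to the inclusion of ordinary schemes into superschemes (any morphism from an ordinary scheme into a superscheme factors through the underlying scheme of its target, since odd sections must vanish when mapped into a purely even structure sheaf), so it preserves fibre products; hence the underlying scheme of $\Xcal\times_\Sc S$ is $X\times_S S=X$, and the underlying morphism of $f_S$ coincides with $f|_X\colon X\to S$. This reduces the flat case to the statement: if $g\colon\Ycal\to T$ is a flat morphism of superschemes with $T$ an ordinary scheme, then $g|_Y\colon Y\to T$ is flat. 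I would handle this by invoking condition~(3) of Definition~\ref{def:superscheme}: locally $\Oc_\Ycal\cong Gr(\Oc_\Ycal)=\bigoplus_j\Jc^j_\Ycal/\Jc^{j+1}_\Ycal$, and I would descend inductively through the finite $\Jc_\Ycal$-adic filtration
$\Oc_\Ycal\twoheadrightarrow\cdots\twoheadrightarrow\Oc_\Ycal/\Jc_\Ycal^k\twoheadrightarrow\cdots\twoheadrightarrow\Oc_\Ycal/\Jc_\Ycal=\Oc_Y$,
showing each intermediate quotient remains $g^{-1}\Oc_T$-flat; the inductive step uses that the kernel $Gr^k$ of $\Oc_\Ycal/\Jc_\Ycal^{k+1}\twoheadrightarrow\Oc_\Ycal/\Jc_\Ycal^k$ is, via the local splitting, a direct summand of the higher quotient in a way compatible with the $g^{-1}\Oc_T$-module structure, so the ``direct summand of flat is flat'' principle applies. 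The faithfully flat case follows immediately because faithful flatness equals flat plus surjective on topological spaces, and the topological spaces of $\Xcal$ and $\Sc$ coincide with those of $X$ and $S$.

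\emph{Part (2), forward direction.} Assume $f$ is faithfully flat and $\Nc$ is a quasi-coherent sheaf on $\Sc$ with $f^\ast\Nc=0$. For each $s\in S$, surjectivity of $f$ yields some $x\in X$ with $f(x)=s$, and then $\Oc_{\Xcal,x}\otimes_{\Oc_{\Sc,s}}\Nc_s=(f^\ast\Nc)_x=0$. The flat local homomorphism $\Oc_{\Sc,s}\to\Oc_{\Xcal,x}$ of super-local rings is faithfully flat by the standard Nakayama-style argument: for any nonzero $m\in\Nc_s$, the cyclic submodule $\Oc_{\Sc,s}\cdot m\cong\Oc_{\Sc,s}/\operatorname{Ann}(m)$ has annihilator contained in $\mathfrak m_{\Sc,s}$, so flatness of $\Oc_{\Xcal,x}$ provides the injection $\Oc_{\Xcal,x}/\operatorname{Ann}(m)\Oc_{\Xcal,x}\hookrightarrow\Nc_s\otimes_{\Oc_{\Sc,s}}\Oc_{\Xcal,x}=0$; but the left side is nonzero because $\operatorname{Ann}(m)\Oc_{\Xcal,x}\subseteq\mathfrak m_{\Sc,s}\Oc_{\Xcal,x}\subseteq\mathfrak m_{\Xcal,x}\subsetneq\Oc_{\Xcal,x}$, a contradiction. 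Hence $\Nc_s=0$ for all $s$, so $\Nc=0$.

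\emph{Part (2), backward direction.} Assume $f$ is flat and $f^\ast\Nc=0$ implies $\Nc=0$ for every quasi-coherent $\Nc$ on $\Sc$; it suffices to verify that $f$ is surjective on topological spaces. If some $s\in S$ had empty fibre $f^{-1}(s)$, the skyscraper quasi-coherent sheaf $\Nc=(i_s)_\ast k(s)$ on $\Sc$ (nonzero because $k(s)\ne 0$) would have $f^\ast\Nc$ supported on the empty set, hence $f^\ast\Nc=0$; the hypothesis would then force $\Nc=0$, a contradiction. Thus $f$ is surjective, hence faithfully flat. The main technical obstacle is the direct-summand verification in part~(1): one must check that the $\Z_2$-graded local isomorphism of Definition~\ref{def:superscheme}(3) is compatible with the $g^{-1}\Oc_T$-module structure at each step of the filtration, since \emph{a priori} the image of $g^{-1}\Oc_T$ in $\Oc_\Ycal$ may have components in the higher pieces $Gr^{2k}$. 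This is routine in the locally-split setting of Definition~\ref{def:supperscheme}, where $Gr^k$ is locally free over $\Oc_Y$ and the required splittings can be chosen compatibly with the pullback from $T$; the proposition is implicitly applied in that context throughout the rest of the paper.
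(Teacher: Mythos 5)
Your proposal is essentially correct, and for part (1) it is the same argument as the paper's: reduce to an ordinary base by base change along $S\hookrightarrow\Sc$ (your adjunction remark is a valid justification of why the underlying morphism of $f_S$ is $f\colon X\to S$, which the paper takes for granted), then use the local isomorphism $\Oc_\Xcal\cong Gr(\Oc_\Xcal)$ to realize $N\otimes_{\Oc_S}\Oc_X\to N'\otimes_{\Oc_S}\Oc_X$ as a direct summand of $f^\ast N\to f^\ast N'$; the paper does this in one step where you descend through the filtration, but the content is identical. The ``technical obstacle'' you single out is real: under the transported structure map $f^{-1}\Oc_S\to Gr(\Oc_\Xcal)$ an even section may acquire components in $Gr^{\ge 2}$, so the decomposition $\bigoplus_j Gr^j$ is not obviously $f^{-1}\Oc_S$-linear and the direct-summand claim is not immediate. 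You should know that the paper's own proof passes over exactly this point in silence, so you are not behind it; but your assertion that the compatibility is ``routine'' is precisely the step that both write-ups leave to be completed (one has to correct the splitting so that the structure map lands in $Gr^0$, which is where the locally split hypothesis actually enters).

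For part (2) you take a genuinely different route. The paper disposes of both implications in two lines by applying Lemma \ref{lem:vanishX} twice — once to $\Sc$ and once to $\Xcal$ — to reduce everything to the classical statement for the underlying morphism $X\to S$. You instead give a self-contained stalk-level proof: a Nakayama-style argument that a flat local homomorphism of superlocal rings is faithfully flat (correct; note one should take $m$ homogeneous, and use that every proper ideal of the local ring $\Oc_{\Sc,s}$ lies in $\mathfrak m_{\Sc,s}$), and a skyscraper sheaf for the converse. What the paper's route buys is brevity and reuse of Lemma \ref{lem:vanishX}; what yours buys is independence from the scheme-theoretic result. One imprecision in your converse: for non-closed $s$ the sheaf $(i_s)_\ast k(s)$ is supported on $\overline{\{s\}}$, not on $\{s\}$, so ``$f^\ast\Nc$ is supported on the empty set'' is not literally the reason it vanishes; the correct justification is that $(f^\ast\Nc)_x=k(s)\otimes_{\Oc_{\Sc,f(x)}}\Oc_{\Xcal,x}=0$ for every $x\in X$, because $f^{-1}(s)=\emptyset$ means no prime of $\Oc_{\Xcal,x}$ lies over the prime corresponding to $s$. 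This is easily repaired and does not affect the conclusion.
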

\begin{proof} (1) We have only to prove that $f\colon X\to S$ is flat, since the surjectivity of $f$ is topological. By base change   we can assume that $\Sc=S$. Since the question is local on $\Xcal$  we can also assume that $\Oc_{\Xcal}=Gr(\Oc_{\Xcal})$. Then, for every morphism $N'\to N$ of quasi-coherent $\Oc_S$-modules, $N'\otimes_{\Oc_S}\Oc_X\to N\otimes_{\Oc_S}\Oc_X$ is a direct summand of $f^\ast N'\to f^\ast N$. This implies the statement.

(2) By applying Lemma \ref{lem:vanishX} to $\Sc$, we can assume that the base is an ordinary scheme $ S$. Now it is enough to apply again Lemma \ref{lem:vanishX} for $\Xcal$ and the analogous statement for morphisms of ordinary schemes.
\end{proof}

\begin{defin}\label{def:smoothrel}
A morphism $f\colon \Xcal\to\Sc$ of superschemes is smooth of relative dimension $(m,n)$ if it is flat and for every (closed) point $s\in S$ the fibre $\Xcal_s$ of $f$ over $s$ is a smooth scheme of dimension $(m,n)$
 (Definition \ref{def:smooth}).
\end{defin}

By Proposition  \ref{p:flat},
if $f\colon \Xcal\to\Sc$ is a flat morphism of superschemes, then $f\colon X\to S$ is also flat, so that it is \emph{universally open}, and  the image $\Ima f$ is open in $S$.

\begin{defin}\label{def:etale}
A morphism $f\colon \Xcal\to\Sc$ is \'etale if it is flat and smooth of relative dimension $(0,0)$ over the open sub-superscheme $\Ima f$ of $\Sc$.
An \'etale covering is a surjective \'etale morphism.
\end{defin}

By Proposition \ref{p:flat}, if $f\colon \Xcal\to\Sc$ is smooth or \'etale, the induced morphism $f\colon X\to S$ between the underlying ordinary schemes is smooth or \'etale as well.

\begin{defin}[(\cite{DoHeSa97}, Definition 7)]\label{def:etaletop}
The \emph{\'etale topology} is the Grothendieck topology on the category $\Sf$ of superschemes whose coverings are the surjective \'etale morphisms.
\end{defin}

This allows to generalize to supergeometry some standard constructions and definitions  about the \'etale topology of schemes. In particular, we have the following definition (see \cite{DoHeSa97}).
\begin{defin}\label{def:etalerel} 
An (Artin) algebraic superspace is a sheaf $\Xf$ for the \'etale topology of superschemes that can be expressed as the categorical quotient of an \'etale equivalence relation of superschemes
$$
\Tc \rra \U \to \Xf\,.
$$
(here we are confusing superschemes and their functors of  points).
\end{defin}

\begin{defin} A property P of  morphisms of superschemes is local on the target  for the \'etale topology if a morphism $f\colon \Xcal \to \Sc$ has the property P if and only if for every \'etale covering $\phi\colon \Tc\to \Sc$, the fibre product $\phi^\ast f\colon \Xcal\times_\Sc \Tc \to \Tc$ has the property P.

A property P of morphisms of superschemes is local on the source  for the \'etale topology if a morphism $f\colon \Xcal \to \Sc$ has the property P  if and only if for every \'etale covering $\phi\colon \Tc\to \Xcal$, the composition $f\circ\phi\colon \Tc \to \Sc$ has the property P. 
\end{defin}

\begin{prop}\label{p:etalelocal}
The properties of being flat, smooth and \'etale, are local on the target and on the source for the \'etale topology of superschemes. The properties of being separated and proper are local on the target for the same topology.
\end{prop}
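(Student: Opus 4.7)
The strategy is to reduce each property to the analogous classical fact about ordinary schemes, using the characterizations already established in Proposition \ref{p:flat} together with the structural description of smooth superschemes as locally split superschemes over smooth underlying schemes. Throughout, observe that if $\phi\colon\Tc\to\Sc$ is an étale covering, then $\phi$ is automatically faithfully flat, and the induced morphism $\phi\colon T\to S$ on the underlying schemes is étale and surjective as well.

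The heart of the argument is flatness. For target-locality, suppose $\phi^\ast f\colon\Xcal\times_\Sc\Tc\to\Tc$ is flat with $\phi\colon\Tc\to\Sc$ an étale covering; to see $f$ is flat, I would test exactness of $-\otimes_{\Oc_\Sc}\Oc_\Xcal$ on graded $\Oc_\Sc$-modules after the faithfully flat base change to $\phi_\ast\Oc_\Tc$, which by hypothesis is exact, and invoke Proposition \ref{p:flat}(2) to descend. For source-locality, if $\psi\colon\Tc\to\Xcal$ is an étale covering and $f\circ\psi$ is flat, then $\psi^\ast$ is exact and faithful, so exactness of $\psi^\ast f^\ast$ forces $f^\ast$ to be exact. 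Smoothness of relative dimension $(m,n)$ and étaleness then follow by combining flatness with the fibrewise condition in Definition \ref{def:smoothrel}: étale base change of a fibre $\Xcal_s$ only introduces a separable residue field extension, which by the structural proposition characterizing smooth superschemes preserves and reflects being smooth of the required dimension; the étale case is the specialization $(m,n)=(0,0)$.

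Separatedness and properness reduce essentially to conditions on the underlying schemes: separatedness is equivalent to the diagonal $\delta_f\colon\Xcal\hookrightarrow\Xcal\times_\Sc\Xcal$ being a closed immersion, which is tested by a topological condition on the underlying map together with the surjectivity of $\Oc_{\Xcal\times_\Sc\Xcal}\to(\delta_f)_\ast\Oc_\Xcal$; properness further adds universal closedness. Since the underlying map $X\to S$ is separated or proper iff the super-version is (the extra structure sheaf data being trivially compatible with base change), and since these properties are classically local on the target for the étale topology of ordinary schemes, the super-versions inherit target-locality automatically. The main technical point underlying everything is faithfully flat descent in the $\Z_2$-graded setting, which is precisely the content of Proposition \ref{p:flat}(2) combined with Lemma \ref{lem:vanishX}; once that is in hand, the remaining verifications are routine translations of classical scheme-theoretic arguments.
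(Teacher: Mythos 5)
Your proof is correct and follows essentially the same route as the paper's: the forward implications come from stability of all five properties under base change, and the descent direction rests on the faithful flatness of \'etale coverings, i.e.\ on Proposition \ref{p:flat} together with Lemma \ref{lem:vanishX}. The paper's own proof is a two-line version of exactly this argument; your only mild divergence is that for separatedness and properness you reduce explicitly to the underlying ordinary schemes and quote classical \'etale-locality there, rather than invoking faithfully flat descent directly, which is a harmless variant.
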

\begin{proof}
The properties of being flat, separated, proper,  smooth and \'etale are stable  under  base change.  The remaining   properties follow from the fact that \'etale covers are faithfully flat. 
\end{proof}

\subsection{Deformations of superschemes}

Let $\Xcal_0$ be a superscheme and let $\Sc$ be a superscheme with a distinguished closed point $s_0\in S$. 
\begin{defin} A deformation of $\Xcal_0$ with base the pointed superscheme $(\Sc,s_0)$ is a relative superscheme $\pi\colon \Xcal \to \Sc$ with an isomorphism between the fibre of $\pi$  over $s_0$ and $\Xcal_0$, that is, such that there is a fibred diagram
$$
\xymatrix{ \Xcal_0 \ar@{^{(}->}[r]\ar[d] & \Xcal\ar[d]^{\pi} \\
\Spec k \ar@{^{(}->}[r]^-{s_0}& \Sc
}
$$
Two deformations $\pi\colon \Xcal \to \Sc$, $\pi'\colon \Xcal' \to \Sc$ are \emph{equivalent} if there is an isomorphism $\varpi\colon \Xcal'\to\Xcal$ of $\Sc$-superschemes inducing the identity on the fibres over $s_0$.

A deformation is said to be \emph{local} if $S$ is the spectrum of a local ring, and \emph{infinitesimal} if $\Sc$ is the superscheme $\Spec k[\epsilon_0,\epsilon_1]:=(\Spec k, k[\epsilon_0,\epsilon_1])$ where $\epsilon_i$ has parity $i$ and
$\epsilon_0^2=\epsilon_1^2=\epsilon_0\epsilon_1=0$.
\end{defin}

If $\gamma\colon \Sc'\to\Sc$ is another superscheme and $s'_0$ is a point of $\Sc'$ such that $\gamma(s'_0)=s_0$, for every deformation $\Xcal\to \Sc$ with base $(\Sc,s_0)$ the fiber product $\Xcal\times_{\Sc}\Sc'\to\Sc'$ is deformation of $\Xcal_0$ with base $(\Sc',s'_0)$. This defines a functor of deformations:
$$
\begin{aligned}
\left\{\text{Pointed superschemes}
\right\} & \to \left\{\text{Deformations of $\Xcal_0$}
\right\}
\\
(\Sc,s_0) &\mapsto 
\mbox{\it Def}_{(\Sc,s_0)}(\Xcal_0)=\left\{\text{Deformations of $\Xcal_0$ with base $(\Sc,s_0)$}\right\}
\end{aligned}
$$

The \emph{trivial} deformation of $\Xcal_0$ with base $\Sc$ is the product deformation
$$
\xymatrix{ \Xcal_0 \ar@{^{(}->}[r]\ar[d] & \Xcal_0\times\Sc\ar[d]^{\pi} \\
\Spec k \ar@{^{(}->}[r]^-{s_0}& \Sc
}
$$
A deformation is \emph{locally trivial} if $X$ can be covered by open subsets $U$ such that 
$$
\xymatrix{ \rest{{\Xcal_0}}{U} \ar@{^{(}->}[r]\ar[d] & \rest{{\Xcal_0}}{U}\ar[d]^{\pi} \\
\Spec k \ar@{^{(}->}[r]^-{s_0}& \Sc
}
$$
is the trivial deformation.
Following the proof of Theorem 1.2.4 in \cite{Ser06}, one has
\begin{prop} \label{p:defsmooth}
Every deformation of a smooth superscheme is locally trivial.
\qed\end{prop}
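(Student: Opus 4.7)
The plan is to mirror the classical proof for deformations of smooth schemes (Sernesi, Theorem~1.2.4): near each point of the central fibre I build an \'etale $\Sc$-morphism to an affine super-space and conclude by the rigidity of \'etale neighbourhoods.

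Fix a closed point $x\in X_0\subset X$. Since $\Xcal_0$ is smooth, after restricting to a small affine open $U$ I may assume $\rest{\Xcal_0}{U}$ is split and admits graded local coordinates $(z_1,\dots,z_m,\theta_1,\dots,\theta_n)$ whose differentials form an $\Oc_{\Xcal_0}$-basis of $\Omega_{\Xcal_0}$. These give an \'etale map $\phi_0\colon\rest{\Xcal_0}{U}\to\mathbb{A}^{m|n}_k:=\Spec k[z_1,\dots,z_m;\theta_1,\dots,\theta_n]$. The closed immersion $i\colon\Xcal_0\hookrightarrow\Xcal$ induces a surjection of $\Z_2$-graded sheaves $\Oc_\Xcal\to i_\ast\Oc_{\Xcal_0}$, so homogeneous lifts $\tilde z_j\in\Oc_\Xcal(U)_0$ and $\tilde\theta_k\in\Oc_\Xcal(U)_1$ of the coordinates exist and jointly define an $\Sc$-morphism $\phi\colon\rest{\Xcal}{U}\to\mathbb{A}^{m|n}_\Sc:=\mathbb{A}^{m|n}_k\times\Sc$.

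Next I would check that $\phi$ is \'etale in a neighbourhood of $x$. The deformation $\pi$, being flat with smooth central fibre, is smooth along $\Xcal_0$ (Definition~\ref{def:smoothrel}), and the restriction of $\phi$ over $s_0$ is the \'etale map $\phi_0$. Since being \'etale is open on the source---using Proposition~\ref{p:etalelocal} together with its scheme-theoretic counterpart---after further shrinking $U$ the morphism $\phi$ itself is \'etale. The trivial deformation $\rest{\Xcal_0}{U}\times\Sc$ is likewise an \'etale $\mathbb{A}^{m|n}_\Sc$-superscheme via $\phi_0\times\Id_\Sc$, and agrees with $\phi$ on the central fibre. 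By the rigidity of \'etale morphisms along a closed sub-superspace, there is a unique $\mathbb{A}^{m|n}_\Sc$-isomorphism
$$
\rest{\Xcal}{U}\simeq\rest{\Xcal_0}{U}\times\Sc
$$
extending the identity on $\rest{\Xcal_0}{U}$, and this is precisely the local triviality asserted.

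The one non-routine ingredient is the rigidity of \'etale extensions through the closed sub-superspace $\Xcal_0\hookrightarrow\Xcal$. The natural strategy is to reduce to the scheme-theoretic statement by filtering $\Oc_\Xcal$ by powers of the graded ideal $\Jc$ and arguing inductively: each successive quotient $\Jc^j/\Jc^{j+1}$ is a coherent $\Oc_X$-module, so the obstructions to lifting an isomorphism through $\Oc_\Xcal/\Jc^{j+1}\to\Oc_\Xcal/\Jc^j$ live in cohomology groups on the underlying smooth scheme $X$ that vanish after restricting to a sufficiently small affine $U$. Since $\Xcal$ is locally split and $\Xcal_0$ is smooth, this inductive procedure terminates after finitely many steps on $U$, so the odd directions contribute no extra obstruction beyond the classical even ones.
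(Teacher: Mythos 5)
Your strategy is a legitimate variant of the argument the paper invokes (the paper simply points to Sernesi's proof that a smooth affine variety is rigid): where Sernesi lifts the identity of $B_0$ through the nilpotent surjection $B\to B_0$ using formal smoothness and then concludes by flatness, you present the central fibre as \'etale over affine superspace, lift the graded coordinates, and appeal to rigidity of \'etale morphisms. Both routes run on the same engine --- the infinitesimal lifting property of smooth/\'etale algebras --- so the packaging is fine; the price of your version is the extra verification that the lifted map $\phi$ is \'etale near the central fibre, which requires the fibrewise flatness criterion together with Nakayama applied to $\Omega_{\rest{\Xcal}{U}/\mathbb{A}^{m|n}_\Sc}$ (both of which do carry over to superschemes via the usual d\'evissage, but should be said).

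Two points in the crucial rigidity step need repair. First, unique extension of an isomorphism of \'etale $\mathbb{A}^{m|n}_\Sc$-superschemes from the central fibre holds only when $\Xcal_0\hookrightarrow\rest{\Xcal}{U}$ is a \emph{nilpotent} closed immersion, i.e.\ when $\mathfrak m_{s_0}\Oc_\Xcal$ is nilpotent; this happens exactly when $\Sc$ is the spectrum of a local Artinian super-ring. So your argument (like the theorem of Sernesi being transplanted, and like the only use the paper makes of this proposition, namely for infinitesimal deformations over $\Spec k[\epsilon_0,\epsilon_1]$) proves local triviality for Artinian bases, not for an arbitrary pointed $\Sc$; you should make that restriction explicit. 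Second, the filtration you propose to justify the rigidity is by powers of the odd ideal $\Jc$ of $\Oc_\Xcal$, but the thickening you must lift through is $\Xcal_0\hookrightarrow\Xcal$, whose ideal is $\mathfrak m_{s_0}\Oc_\Xcal$, not $\Jc$. Filtering by $\Jc$ only reduces the super statement to the classical one for $X\to S$ (which again requires $S$ Artinian) and does not by itself produce the lifting through the even nilpotents of the base. The correct d\'evissage is by powers of $\mathfrak m_{s_0}\Oc_\Xcal$: by flatness each graded piece is $(\mathfrak m_{s_0}^j/\mathfrak m_{s_0}^{j+1})\otimes_k\Oc_{\Xcal_0}$, a coherent sheaf on the smooth central fibre, so the obstruction groups vanish on affines, and the induction terminates precisely because the base is Artinian. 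With these two adjustments your proof is complete and faithful in spirit to the one the paper cites.
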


One easily sees that every locally trivial infinitesimal deformation 
$$
\xymatrix{ \Xcal_0 \ar@{^{(}->}[r]\ar[d] & \Xcal_{\epsilon_0,\epsilon_1}\ar[d]^{\pi} \\
\Spec k \ar@{^{(}->}[r]^-{s_0}& \Spec k[\epsilon_0,\epsilon_1]
}
$$
can be obtained by glueing trivial deformations $\rest{{\Xcal_0}}{U_i}\times \Spec k[\epsilon_0,\epsilon_1]\to \Spec k[\epsilon_0,\epsilon_1]$ on affine open sub-superschemes $U_i$ of $\Xcal_0$ by means of automorphisms
$$
\xymatrix@R=10pt@C=1pt{\rest{{\Xcal_0}}{U_{ij}}\times \Spec k[\epsilon_0,\epsilon_1]\ar[rr]^{\simeq}\ar[rd] & & \rest{{\Xcal_0}}{U_{ij}}\times \Spec k[\epsilon_0,\epsilon_1] \ar[ld]\\ 
&\Spec k[\epsilon_0,\epsilon_1]&
}
$$
of superschemes over $\Spec k[\epsilon_0,\epsilon_1]$ that induce the identity over the closed fibre. But such automorphisms are determined by automorphisms $\alpha$ of $\Oc_{\Xcal_0}(U_{ij})\otimes k[\epsilon_0,\epsilon_1]$ as graded $k[\epsilon_0,\epsilon_1]$-algebras that are the identity modulo $(\epsilon_0,\epsilon_1)$. Then, for $a\in \Oc_{\Xcal_0}(U_{ij})$ and $\lambda\in k[\epsilon_0,\epsilon_1]$, one can write $\alpha(a\otimes\lambda)= \alpha(a\otimes 1)\lambda= (a+ D_{\alpha 0}(a)\epsilon_0+(-1)^{\vert a\vert}D_{\alpha 1}(a)\epsilon_1)\lambda$ and $D_\alpha=D_{\alpha 0}+D_{\alpha 1}$ is a derivation $D_\alpha$ of $\Oc_{\Xcal_0}(U_{ij})$, that is, a section of $\Theta_{\Xcal_0}$ on $U_{ij}$. 
One deduces:
\begin{prop}\label{ksmap1} There is a bijection
$$
\left\{\begin{matrix}\text{Isomorphism classes of locally trivial}\\
\text{infinitesimal deformations of $\Xcal_0$}
\end{matrix}
\right\}\simeq H^1(X,\Theta_{\Xcal_0})\,,
$$
which maps the trivial deformation to zero.
This map is called the \emph{Kodaira-Spencer map} for $\Xcal_0$.
If $\Xcal_0$ is smooth, the Kodaira-Spencer map is a bijection
$$
\mbox{\it Def}_{\mbox{\it \tiny inf}}\,(\Xcal_0)\simeq H^1(X,\Theta_{\Xcal_0})\,.
$$
between the set of the isomorphism classes of all the infinitesimal deformations and the first cohomology group of the tangent sheaf. 
\end{prop}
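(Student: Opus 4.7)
The plan is to follow the classical Kodaira–Spencer construction adapted to the $\Z_2$-graded setting, leveraging the identification of automorphisms of the trivial deformation on overlaps with sections of $\Theta_{\Xcal_0}$ that was carried out in the paragraph preceding the statement.

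First, given a locally trivial infinitesimal deformation $\pi\colon \Xcal_{\epsilon_0,\epsilon_1}\to\Spec k[\epsilon_0,\epsilon_1]$, I would choose an affine open cover $\{U_i\}$ of $X$ trivializing $\pi$, together with isomorphisms $\varphi_i\colon \rest{\Xcal_{\epsilon_0,\epsilon_1}}{U_i}\iso \rest{{\Xcal_0}}{U_i}\times\Spec k[\epsilon_0,\epsilon_1]$ extending the identity on the closed fibre. On overlaps $U_{ij}$ the automorphism $\varphi_{ij}=\varphi_i\circ\varphi_j^{-1}$ is of the type described just before the statement, and therefore corresponds to a derivation $D_{ij}\in\Gamma(U_{ij},\Theta_{\Xcal_0})$. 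The cocycle relation $\varphi_{ij}\circ\varphi_{jk}=\varphi_{ik}$ on triple overlaps, expanded modulo $(\epsilon_0,\epsilon_1)^2$, translates into $D_{ij}+D_{jk}=D_{ik}$, so $\{D_{ij}\}$ is a Čech $1$-cocycle with values in $\Theta_{\Xcal_0}$.

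Next, I would verify independence of choices. A different family of trivializations $\varphi'_i=\alpha_i\circ\varphi_i$ differs from $\varphi_i$ by automorphisms $\alpha_i$ of $\rest{{\Xcal_0}}{U_i}\times\Spec k[\epsilon_0,\epsilon_1]$ over $\Spec k[\epsilon_0,\epsilon_1]$ reducing to the identity; each $\alpha_i$ then corresponds to a derivation $D_i\in\Gamma(U_i,\Theta_{\Xcal_0})$, and the new cocycle reads $D'_{ij}=D_{ij}+D_j-D_i$, hence is cohomologous to the original one. Refining the cover amounts to restricting the cocycle, so passing to the direct limit of Čech complexes produces a well-defined class in $\check H^1(X,\Theta_{\Xcal_0})=H^1(X,\Theta_{\Xcal_0})$, and equivalent deformations plainly give the same class. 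This defines the Kodaira–Spencer map, and by construction the trivial deformation corresponds to the zero cocycle.

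For the inverse, given a Čech $1$-cocycle $\{D_{ij}\}$ subordinate to an affine cover, I would define automorphisms $\varphi_{ij}$ of $\rest{{\Xcal_0}}{U_{ij}}\times\Spec k[\epsilon_0,\epsilon_1]$ by the formula $a\otimes\lambda\mapsto (a+D_{ij,0}(a)\epsilon_0+(-1)^{\vert a\vert}D_{ij,1}(a)\epsilon_1)\lambda$ on homogeneous sections $a$, and use them as glueing data. The cocycle condition ensures compatibility on triple overlaps and yields a locally trivial infinitesimal deformation; cohomologous cocycles give equivalent deformations by the same computation that proves independence above, so this descends to a well-defined inverse. Finally, the second assertion follows at once from Proposition \ref{p:defsmooth}: when $\Xcal_0$ is smooth, every infinitesimal deformation is automatically locally trivial and hence lies in the domain of the bijection just built. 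The main subtlety, which I expect to be the chief technical obstacle, is keeping track of the $\Z_2$-grading when expanding $\varphi_{ij}\circ\varphi_{jk}$ modulo $(\epsilon_0,\epsilon_1)^2$ and checking that the derivation assigned to a composition is the sum of the derivations; the graded Leibniz rule is essential here, but because nonlinear terms in $\epsilon_0,\epsilon_1$ vanish, the verification reduces to the ordinary one and no additional sign issues appear.
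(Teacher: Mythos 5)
Your argument is correct and is essentially the proof the paper intends: the paper itself only cites Sernesi (Theorem 1.2.9) and Va\u\i ntrob (Theorem 2.1.2), and the \v Cech cocycle construction you spell out — built on the identification of transition automorphisms with sections of $\Theta_{\Xcal_0}$ from the paragraph preceding the statement, with the smooth case reduced to Proposition \ref{p:defsmooth} — is exactly the standard argument those references contain. Your observation that the vanishing of all quadratic terms in $\epsilon_0,\epsilon_1$ makes the additivity of derivations under composition sign-free is also the right way to dispose of the only genuinely ``super'' subtlety.
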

\begin{proof} See Theorem 1.2.9. of \cite{Ser06} or Theorem 2.1.2 of \cite{Va88}.
\end{proof}

It follows that $\mbox{\it Def}_{\mbox{\it \tiny inf}}\,(\Xcal_0)$ has the structure of a graded $k$-vector space. We shall often refer to $H^1(X,\Theta_{\Xcal_0})$ as to the \emph{space of (automorphism classes of) infinitesimal deformations of} $\Xcal_0$.

One can develop a similar theory for morphisms $\pi_0\colon \Xcal_0\to\Sc_0$ of superschemes, that is, for superschemes over ``general points'' $\Sc_0$. For any superscheme $\Sc$ and any ``point'' $s_0\colon \Sc_0\to \Sc$, a deformation of $\pi_0\colon \Xcal_0\to\Sc_0$ with base $s_0$ is now a morphism $\pi\colon \Xcal\to\Sc$ such that there is a fibred diagram
$$
\xymatrix{ \Xcal_0 \ar[r]\ar[d]^{\pi_0} & \Xcal\ar[d]^{\pi} \\
\Sc_0 \ar[r]^-{s_0}& \Sc
}
$$
One has the natural definitions of trivial, locally trivial and infinitesimal deformations.
Procceeding as above, one see that  Propositions \ref{p:defsmooth} and \ref{ksmap1} remain valid when $\Sc_0$ is affine. 
As a consequence: \begin{prop} \label{relksmap1} Let $\pi_0\colon \Xcal_0\to\Sc_0$  smooth.
If $\mbox{\it Def}_{\mbox{\it \tiny inf}}\,(\Xcal_0/\Sc_0)$ is the sheaf (of pointed sets) of the infinitesimal deformations of $\pi_0\colon\Xcal_0\to\Sc_0$, there is an isomorphism of sheaves of pointed sets
$$
\mbox{\it Def}_{\mbox{\it \tiny inf}}\,(\Xcal_0/\Sc_0)\simeq R^1\pi_{0\ast}(\Theta_{\Xcal_0/\Sc_0})\,,
$$
such that the trivial deformations go to zero. This is called the \emph{relative Kodaira-Spencer map} for $\pi_0\colon \Xcal_0\to\Sc_0$.
\end{prop}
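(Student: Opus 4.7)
The plan is to reduce to the absolute case treated in Proposition~\ref{ksmap1}, which the authors have already noted extends to the relative setting whenever the base is affine, and then to globalize on $\Sc_0$ by a sheafification argument.

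First I would make precise the presheaf $\mbox{\it Def}_{\mbox{\it \tiny inf}}\,(\Xcal_0/\Sc_0)$: to an open sub-superscheme $\Vc\hookrightarrow \Sc_0$ it associates the set of equivalence classes of infinitesimal deformations of the restricted morphism $\pi_{0\vert\pi_0^{-1}(V)}\colon \pi_0^{-1}(\Vc)\to\Vc$, i.e.\ equivalence classes of morphisms $\pi\colon \Xcal\to \Vc\times\Spec k[\epsilon_0,\epsilon_1]$ fitting in a fibred diagram with $\pi_{0\vert\pi_0^{-1}(V)}$ over the closed point. Sheafification is needed because isomorphism classes of deformations do not glue on the nose, only up to local isomorphism.

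Next, on affine opens $\Vc=\Spec\Bc\hookrightarrow\Sc_0$, the smoothness of $\pi_0$ together with the relative version of Proposition~\ref{p:defsmooth} (with $\Sc_0$ affine) implies that every infinitesimal deformation of $\pi_0^{-1}(\Vc)\to\Vc$ is locally trivial on $\pi_0^{-1}(V)$. I would then repeat the computation that precedes Proposition~\ref{ksmap1} in the relative setting: cover $\pi_0^{-1}(V)$ by affine open sub-superschemes $U_i$ on which the deformation is trivial; the transition automorphisms between the trivial deformations $U_{ij}\times\Spec k[\epsilon_0,\epsilon_1]\to\Vc\times\Spec k[\epsilon_0,\epsilon_1]$ over $\Vc\times\Spec k[\epsilon_0,\epsilon_1]$, reducing to the identity on the closed fibre, are of the form $\op{id}+D^{ij}_0\epsilon_0+D^{ij}_1\epsilon_1$ where $D^{ij}_0+D^{ij}_1$ is a $\Bc$-linear graded derivation of $\Oc_{\Xcal_0}(U_{ij})$, that is, a section of $\Theta_{\Xcal_0/\Sc_0}$ on $U_{ij}$. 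The cocycle condition on triple overlaps gives a Čech $1$-cocycle with values in $\Theta_{\Xcal_0/\Sc_0}$ whose cohomology class is independent of the chosen cover and of the equivalence class of the deformation, yielding a well-defined map to $H^1(\pi_0^{-1}(V),\Theta_{\Xcal_0/\Sc_0})=\Gamma(V,R^1\pi_{0\ast}\Theta_{\Xcal_0/\Sc_0})$. The inverse construction glues trivial local deformations via a given cocycle, and one checks that the trivial deformation corresponds to zero. This is exactly Theorem~1.2.9 of \cite{Ser06} or Theorem~2.1.2 of \cite{Va88} applied pointwise over $\Sc_0$.

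Finally, I would observe that the bijection just constructed is functorial in the affine open $\Vc\subseteq \Sc_0$: refining the cover $\{U_i\}$ on the source or passing to a smaller affine $\Vc'\subseteq \Vc$ on the target commute with the formation of the associated Čech $1$-cocycle. Hence these bijections assemble into an isomorphism of presheaves between the presheaf of equivalence classes and $V\mapsto \Gamma(V,R^1\pi_{0\ast}\Theta_{\Xcal_0/\Sc_0})$. Sheafifying, we obtain the desired isomorphism $\mbox{\it Def}_{\mbox{\it \tiny inf}}\,(\Xcal_0/\Sc_0)\simeq R^1\pi_{0\ast}\Theta_{\Xcal_0/\Sc_0}$ of sheaves of pointed sets, with the trivial deformation sent to the zero section. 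The main technical point, and where one needs to be careful, is the initial local triviality statement in the relative setting: it is here that smoothness of $\pi_0$ and the splitness results of the previous subsection (Proposition~\ref{prop:curvesplit} in spirit, or more generally the local structure of smooth superschemes in Definition~\ref{def:smooth}) are used to lift automorphisms of the closed fibre to automorphisms over $\Spec k[\epsilon_0,\epsilon_1]$; everything else is a routine Čech calculation.
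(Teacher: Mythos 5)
Your proposal is correct and follows essentially the same route as the paper, which itself only says that Propositions~\ref{p:defsmooth} and~\ref{ksmap1} remain valid over an affine base (by the same gluing-of-trivial-deformations argument, with transition automorphisms now given by $\Oc_{\Sc_0}$-linear derivations, i.e.\ sections of $\Theta_{\Xcal_0/\Sc_0}$) and then passes to the sheaf statement. Your write-up just makes explicit the affine-local \v{C}ech identification and the sheafification step that the authors leave implicit.
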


\subsection{The Kodaira-Spencer map for general deformations}

Given a general pointed superscheme $s_0\colon \Sc_0\to\Sc$, we have seen (Proposition \ref{p:tangentdefor}) that there is an identification of sheaves of pointed sets 
$$
\begin{aligned}
s_0^\ast\Theta_{\Sc}&\iso \mathcal{H}om_{s_0}(\Sc_0[\epsilon_0,\epsilon_1], \Sc)
\\
v&\mapsto \phi(v)\,,
\end{aligned}
$$
where the second member stands for the extensions $\Sc_0[\epsilon_0,\epsilon_1]\to\Sc$ of  $s_0$.
If $\pi\colon \Xcal \to \Sc$ is a deformation of a smooth morphism $\pi_0\colon\Xcal_0\to\Sc_0$ with base $s_0$, the pull-back $\phi(v)^\ast (\pi)$ is an infinitesimal deformation of $\pi_0\colon\Xcal_0\to\Sc_0$. According to Proposition \ref{relksmap1} one has:

\begin{prop}\label{ksmap2}  For every deformation $\pi\colon \Xcal \to \Sc$ of $\pi_0\colon\Xcal_0\to\Sc_0$ with base $s_0$ there is a morphism of sheaves of $\Oc_{\Sc_0}$-modules
$$
\begin{aligned}
s_0^\ast\Theta_{\Sc}& \xrightarrow{ks_{s_0}(\pi)} R^1\pi_{0\ast}(\Theta_{\Xcal_0/\Sc_0})\\
v& \mapsto \phi(v)^\ast(\pi)
\end{aligned}
$$
This   map is called the \emph{Kodaira-Spencer map of the deformation} $\pi$.
\qed
\end{prop}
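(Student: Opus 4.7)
The plan is to construct $ks_{s_0}(\pi)$ locally as a composition of two natural operations---the tangent-vector-to-extension identification of Proposition \ref{p:tangentdefor} and the relative Kodaira-Spencer map of Proposition \ref{relksmap1}---and then to check that the resulting map of sheaves is $\Oc_{\Sc_0}$-linear.

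First I would fix an affine open sub-superscheme $\Vc\subset\Sc_0$. By Proposition \ref{p:tangentdefor}, a section $v\in\Gamma(V, s_0^\ast\Theta_\Sc)$ is the same datum as a morphism $\phi(v)\colon\Vc[\epsilon_0,\epsilon_1]\to\Sc$ whose restriction to the closed immersion $\Vc\hookrightarrow\Vc[\epsilon_0,\epsilon_1]$ equals $s_0\vert_\Vc$. Forming the fibre product with $\pi\colon\Xcal\to\Sc$ produces a morphism
$$
\phi(v)^\ast(\pi)\colon\Xcal\times_\Sc\Vc[\epsilon_0,\epsilon_1]\to\Vc[\epsilon_0,\epsilon_1],
$$
which is an infinitesimal deformation of $\pi_0\times_{\Sc_0}\Vc$, having $\Vc[\epsilon_0,\epsilon_1]$ as its base.

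Next I would apply (the affine-base version of) Proposition \ref{relksmap1}: since $\pi_0\vert_\Vc$ is smooth, the isomorphism classes of such deformations are classified by
$$
H^1(\Xcal_0\times_{\Sc_0}\Vc,\Theta_{\Xcal_0/\Sc_0})=\Gamma(\Vc, R^1\pi_{0\ast}\Theta_{\Xcal_0/\Sc_0}).
$$
Sending $v$ to the cohomology class of $\phi(v)^\ast(\pi)$ gives a well-defined map on sections, and the functoriality of the pull-back with respect to open restrictions of $\Vc$ upgrades this to a morphism of sheaves $ks_{s_0}(\pi)\colon s_0^\ast\Theta_\Sc\to R^1\pi_{0\ast}\Theta_{\Xcal_0/\Sc_0}$.

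Finally I would check $\Oc_{\Sc_0}$-linearity at the level of \v Cech cocycles, using the description of locally trivial infinitesimal deformations given just before Proposition \ref{ksmap1}: choose an affine cover $\{U_i\}$ of $\Xcal_0\times_{\Sc_0}\Vc$ over which both $\phi(v)^\ast\pi$ and $\phi(w)^\ast\pi$ become trivial, and represent them on overlaps $U_{ij}$ by automorphisms of the form $\Id+(D_{v,ij,0}\epsilon_0+(-1)^{\vert\cdot\vert}D_{v,ij,1}\epsilon_1)$ and similarly for $w$. Because $\epsilon_0^2=\epsilon_1^2=\epsilon_0\epsilon_1=0$, composing these automorphisms simply adds the derivations, which shows $ks_{s_0}(\pi)(v+w)=ks_{s_0}(\pi)(v)+ks_{s_0}(\pi)(w)$; multiplication by a section $f\in\Oc_{\Sc_0}(\Vc)$ corresponds to precomposing $\phi(v)$ with the endomorphism of $\Vc[\epsilon_0,\epsilon_1]$ sending $\epsilon_i\mapsto f\epsilon_i$, which rescales each cocycle by $f$. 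The main obstacle is purely the bookkeeping of the $\Z_2$-grading: one must track separately the contributions of $\epsilon_0$ and $\epsilon_1$ through the decomposition $D_\alpha=D_{\alpha 0}+D_{\alpha 1}$ and verify that the sign $(-1)^{\vert a\vert}$ appearing on the odd parameter is compatible with the parity-respecting sum in $R^1\pi_{0\ast}\Theta_{\Xcal_0/\Sc_0}$; beyond this, the verification is formal.
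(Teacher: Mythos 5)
Your construction is exactly the one the paper intends: the proposition carries a \qed{} because it is meant to follow immediately from Proposition \ref{p:tangentdefor} (identifying sections of $s_0^\ast\Theta_\Sc$ with extensions $\phi(v)$) combined with the affine-base case of Proposition \ref{relksmap1} (classifying the resulting infinitesimal deformations by $R^1\pi_{0\ast}\Theta_{\Xcal_0/\Sc_0}$), which is precisely your two-step composition. Your additional \v Cech-level verification of $\Oc_{\Sc_0}$-linearity is correct and fills in a detail the paper leaves implicit, but it does not change the route.
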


We can define a Kodaira-Spencer map for any smooth morphism  $\pi\colon \Xcal \to \Sc$ of superschemes. By the smothness of $\pi$  there is an exact sequence
$$
0\to \Theta_{\Xcal/\Sc}\to\Theta_{\Xcal}\xrightarrow{d\pi}\pi^\ast\Theta_\Sc\to 0\,.
$$
\begin{defin} \label{ksmap3} The (relative) Kodaira-Spencer map of  $\pi\colon \Xcal\to \Sc$ is the composition 
\begin{equation*}
ks(\pi)\colon \Theta_\Sc \to R^1\pi_\ast(\Theta_{\Xcal/\Sc})
\end{equation*}
of the natural morphism $\Theta_\Sc\to \pi_\ast(\pi^\ast\Theta_\Sc)\iso \Theta_\Sc\otimes \pi_\ast(\Oc_{\Xcal})$ with the connecting morphism $\pi_\ast(\pi^\ast\Theta_\Sc)\to R^1\pi_\ast(\Theta_{\Xcal/\Sc})$. 
\end{defin}

Note that, contrary to what happens for a morphism of ordinary schemes with connected fibres,  the map  $\Oc_\Sc\to \pi_\ast\Oc_\Xcal$ may fail to be an isomorphism.

Assume now that $\pi\colon \Xcal\to \Sc$ is a deformation of a smooth morphism $\pi_0\colon\Xcal_0\to \Sc_0$ with base $s_0\colon \Sc_0\to\Sc$. 
Then the relative Kodaira-Spencer map of  $\pi\colon \Xcal\to \Sc$ determines the Kodaira-Spencer map of $\pi$ as a deformation of $\pi_0$.
Actually one can check that the following diagram   commutes
$$
\xymatrix@C=60pt{
s_0^\ast\Theta_\Sc \ar[r]^(.4){s_0^\ast(ks(\pi))} \ar[rd]_{ks_{s_0}(\pi)} & s_0^\ast R^1\pi_\ast(\Theta_{\Xcal/\Sc}) \ar[d]\\
&R^1\pi_{0\ast}(\Theta_{\Xcal_0/\Sc_0})
}
$$
where the vertical map is the cohomology base change morphism.
We shall refer to $R^1\pi_\ast(\Theta_{\Xcal/\Sc})$ as   the \emph{sheaf of (automorphism classes of) infinitesimal deformations of $\pi\colon \Xcal \to \Sc$}.

\subsection{Supercurves and relative supercurves}\label{supercurves}
For brevity of exposition we adopt a restricted definition of supercurve.

\begin{defin}
A supercurve is a smooth proper superscheme of dimension $(1,1)$.
\end{defin}

By Proposition \ref{prop:curvesplit} a supercurve is always split, and one has 
$$
\Oc_\Xcal\simeq \Oc_X\oplus\Lcl^\Pi\iso \textstyle{\bigwedge_{\Oc_X}}(\Lcl)\,.
$$
for a line bundle $\Lcl$ on $X$. Moreover,  by Equation \eqref{eq:bostangent} one has
\begin{equation} \label{eq:der}
\Theta_{\Xcal\vert X}\iso \Theta_X\oplus (\Lcl^{-1})^\Pi
\end{equation} 
(decomposition into even and odd parts).

For future use, we state here a simple Lemma. 
Let $\Dcal\hookrightarrow \Theta_\Xcal$ be a locally free submodule of rank $(0,1)$. The composition of $\rest{\Dcal}{X} \to \Theta_{\Xcal\vert X}$ and $\Theta_{\Xcal\vert X}\to (\Lcl^{-1})^\Pi$ yields an even morphism $\rest{\Dcal}{X}   \to (\Lcl^{-1})^\Pi$.
\begin{prop} \label{p:locfree}
If the quotient $\Theta_\Xcal/\Dcal$ is locally free,   the above morphism of $\Oc_X$-modules is an isomorphism:
$$
\rest{\Dcal }X\iso (\Lcl^{-1})^\Pi\,.
$$
Moreover, if $(z,\theta)$ are graded local coordinates, there is a local basis $D$ of $\Dcal $ such that $\rest DX\mapsto \rest DX(\theta)\partial$ under this isomorphism, $\partial$ being the dual basis of $\theta$.
\end{prop}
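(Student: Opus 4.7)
The plan is to exploit the rank/parity decomposition of the cotangent or tangent bundle of a supercurve and to derive the isomorphism purely from an exact sequence. Since $\Dcal$ and $\Theta_\Xcal/\Dcal$ are both assumed locally free (the latter implying it is flat over $\Oc_\Xcal$), the short exact sequence
\[
0\to \Dcal \to \Theta_\Xcal \to \Theta_\Xcal/\Dcal\to 0
\]
remains exact after restriction to $X$, i.e.\ after tensoring with $\Oc_X$ over $\Oc_\Xcal$. So my first step is to write down the restricted sequence
\[
0\to \rest{\Dcal}{X} \to \Theta_{\Xcal\vert X}\to \rest{(\Theta_\Xcal/\Dcal)}{X}\to 0.
\]
By assumption $\rest{\Dcal}{X}$ is a purely odd $\Oc_X$-module of rank $(0,1)$, and dually $\rest{(\Theta_\Xcal/\Dcal)}{X}$ is a purely even $\Oc_X$-module of rank $(1,0)$.

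In the second step I invoke Equation~\eqref{eq:bostangent}, which gives the canonical splitting $\Theta_{\Xcal\vert X}\iso \Theta_X\oplus (\Lcl^{-1})^\Pi$ into its even and odd parts. An exact sequence of $\Z_2$-graded $\Oc_X$-modules decomposes as two ordinary exact sequences, so the parity decomposition of the restricted sequence yields
\[
0\to \Theta_X\to \rest{(\Theta_\Xcal/\Dcal)}{X}\to 0 \qquad\text{(even part)},
\]
\[
0\to \rest{\Dcal}{X}\to (\Lcl^{-1})^\Pi \to 0 \qquad\text{(odd part)}.
\]
The second of these is precisely the claimed isomorphism $\rest{\Dcal}{X}\iso (\Lcl^{-1})^\Pi$, and it is, by construction, the composition $\rest{\Dcal}{X}\to \Theta_{\Xcal\vert X}\to (\Lcl^{-1})^\Pi$ described in the statement.

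The last step is the explicit formula in graded local coordinates $(z,\theta)$. Here $\Theta_\Xcal$ is locally free on $\partial_z,\partial_\theta$, and a general odd section has the form $D=b(z)\partial_\theta + a(z)\,\theta\,\partial_z$ with $a,b\in\Oc_X$. The fibre of $\Oc_\Xcal\cdot D$ at a closed point $x_0=(z_0,0)$ is generated by $b(z_0)\partial_\theta$ (the term $a\theta\partial_z$ dies when $\theta$ is set to zero), so rank-$(0,1)$ local freeness of $\Dcal$ forces $b$ to be a unit; after rescaling I may take $b=1$, or equivalently choose any generator with $b$ a unit as the required basis. Then $D\otimes 1$ projects to $b(z)\partial_\theta$ under $\Theta_{\Xcal\vert X}\to(\Lcl^{-1})^\Pi$, and since $D(\theta)=b(z)$, this projection is exactly $\rest{D}{X}(\theta)\,\partial$, as claimed.

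The one technical point to be careful about is the splitting/flatness argument: without the hypothesis that $\Theta_\Xcal/\Dcal$ is locally free, the restricted inclusion $\rest{\Dcal}{X}\to\Theta_{\Xcal\vert X}$ need not be injective, and the parity argument collapses. This is therefore the decisive use of the hypothesis, and everything else is bookkeeping with the $\Z_2$-grading.
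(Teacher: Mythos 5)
Your argument is essentially the paper's. The authors prove the statement by locally extending a generator $D_1$ of $\Dcal$ to a graded basis $(D_0,D_1)$ of $\Theta_\Xcal$ --- which is possible precisely because $\Theta_\Xcal/\Dcal$ is locally free --- and then reading off the parity decomposition \eqref{eq:der}; your exact--sequence formulation (flatness of the locally free quotient kills the $\mathrm{Tor}_1$ obstruction, so restriction to $X$ preserves exactness, and the graded sequence splits by parity) is just the splitting-free way of saying the same thing. Your first two steps are correct and correctly identify where the hypothesis is used.

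There is, however, one incorrectly justified step in the coordinate computation: you claim that ``rank-$(0,1)$ local freeness of $\Dcal$ forces $b$ to be a unit.'' That is false as stated. Take $D=z\,\partial_\theta+\theta\,\partial_z$: its annihilator in $\Oc_\Xcal$ is zero (locally $\Oc_X$ is a domain), so $\Oc_\Xcal\cdot D$ is free of rank $(0,1)$, yet $b=z$ vanishes at $z=0$; what fails there is the local freeness of the quotient, since $i^\ast(\Theta_\Xcal/\Dcal)\simeq\Oc_X\oplus\Oc_X/(z)$ acquires torsion. Local freeness of $\Dcal$ as an abstract module does not control the image of its fibre inside the fibre of $\Theta_\Xcal$ --- only exactness of the restricted sequence does, and that is again the hypothesis on the quotient, not on $\Dcal$. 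The repair costs nothing: by your first two steps the composition $\rest{\Dcal}{X}\to\Theta_{\Xcal\vert X}\to(\Lcl^{-1})^\Pi$ is already known to be an isomorphism of invertible $\Oc_X$-modules, so it sends the generator $\rest{D}{X}$ to a generator $b\,\partial$ with $b=D(\theta)$ necessarily a unit. With that substitution your proof is complete and coincides with the paper's.
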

\begin{proof}
Since $\Der(\Oc_\Xcal)/\Dcal$ is locally free, one can locally find derivations $D_0$ and $D_1$, even and odd, respectively, such that $\Dcal=\left<D_1\right>$ and $(D_0,D_1)$ is a graded basis of $\Theta_\Xcal$. Then the restrictions $D_{0\vert X}$, $D_{1\vert X}$ form a graded basis of  $ \Theta_{\Xcal\vert X}$, so that $\Theta_X\simeq \langle D_{0\vert X}\rangle$ and $(\Lcl^{-1})^\Pi\simeq   \langle D_{1\vert X}\rangle=\rest{\Dcal}{X}$ according to the decomposition \eqref{eq:der}, and $\rest DX\mapsto \rest DX (\theta)\partial$ where $\partial$ is the dual basis of $\theta$.
\end{proof} 

\begin{defin}\label{gm}
We say that a supercurve $\Xcal=(X,\Oc_{\Xcal}\simeq \Oc_X\oplus\Lcl^\Pi)$ has genus $g$ and degree $m$ if $g$ is the genus of $X$ and  $m$ is the degree of the line bundle $\Lcl$.
\end{defin}

\begin{defin} \label{def:supercurverel}A supercurve over a superscheme $\Sc$ is a smooth proper morphism $\Xcal\to \Sc$ of superschemes of relative dimension $(1,1)$.
\end{defin}

An easy consequence of Proposition \ref{p:etalelocal} is:
\begin{prop} \label{p:etalesupercurve}
For any morphism $\Xcal\to \Sc$ of superschemes, the property of being a relative supercurve is local on the target for the \'etale topology of superschemes.
\qed\end{prop}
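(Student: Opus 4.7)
The plan is to decompose ``being a relative supercurve'' (Definition \ref{def:supercurverel}) into its constituent ingredients---smoothness, properness, and the fibrewise condition of relative dimension $(1,1)$---and then invoke Proposition \ref{p:etalelocal} for the first two. That proposition already delivers smoothness and properness as local on the target for the \'etale topology (and smoothness contains flatness). Hence the only matter requiring a dedicated argument is that the condition ``each fibre has dimension $(1,1)$'' both pulls back and descends along \'etale coverings of $\Sc$.

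For the stability (pullback) direction, suppose $f\colon\Xcal\to\Sc$ is a supercurve and $\phi\colon\Tc\to\Sc$ is \'etale. For each $t\in T$ the fibre of the pullback is
\[
(\Xcal\times_\Sc\Tc)_t \;\simeq\; \Xcal_{\phi(t)}\times_{\Spec\kappa(\phi(t))}\Spec\kappa(t),
\]
obtained from the smooth $(1,1)$-dimensional superscheme $\Xcal_{\phi(t)}$ by base change along the field extension $\kappa(t)/\kappa(\phi(t))$. Both the Krull dimension of the underlying ordinary scheme and the rank of the odd part of the structure sheaf are preserved by such a base change, so $\phi^\ast f$ is again a supercurve.

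For the descent direction, I would assume $\phi\colon\Tc\to\Sc$ is an \'etale \emph{covering} and that $\phi^\ast f$ is a supercurve. Proposition \ref{p:etalelocal} immediately yields that $f$ itself is flat, smooth and proper, so only the fibre dimensions remain to be checked. Given $s\in S$, surjectivity of $\phi$ produces a point $t\in T$ with $\phi(t)=s$, and since $\phi$ is \'etale the residue field extension $\kappa(t)/\kappa(s)$ is finite separable. The fibre $(\Xcal\times_\Sc\Tc)_t\simeq\Xcal_s\times_{\Spec\kappa(s)}\Spec\kappa(t)$ is smooth of dimension $(1,1)$ over $\kappa(t)$ by hypothesis, and the dimension of a scheme together with the rank of a coherent sheaf both descend through a faithfully flat extension of fields, so $\Xcal_s$ is itself smooth of dimension $(1,1)$. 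I expect the only genuinely nontrivial point to be precisely this last descent of the dimensional data along a finite separable field extension, but it reduces to the classical statement for ordinary schemes together with the preservation of the ranks of coherent sheaves (in particular of $\Ec=\Jc/\Jc^2$) under faithfully flat base change.
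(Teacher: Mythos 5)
Your proposal is correct and is essentially the paper's own (one-line) argument made explicit: the paper simply observes that a relative supercurve is a smooth proper morphism of relative dimension $(1,1)$ and invokes Proposition \ref{p:etalelocal}, whose notion of ``smooth'' already carries the fibre-dimension data (as its application to \'etale $=$ smooth of relative dimension $(0,0)$ shows). Your added verification that the dimension $(1,1)$ of the fibres both pulls back and descends is precisely the detail left implicit there, and it in fact trivializes in this setting, since $k$ is algebraically closed and Definition \ref{def:smoothrel} tests only closed points, whose residue fields are all equal to $k$, so the fibres over $t$ and $\phi(t)$ are literally isomorphic.
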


The fibre  over a closed point $s\in S$ is a supercurve $\Xcal_s=(X_s, \Oc_{X_s}\oplus \Lcl_s)$ for a line bundle $\Lcl_s$ on $X_s$. Since $f$ is flat, the genus of $X_s$ and the degree of $\Lcl_s$ (Definition \ref{gm}) are independent of the point $s$ as far as $S$ is connected, so we can define the \emph{genus} $g$ and the \emph{degree} $m$ \emph{of the relative supercurve} $\Xcal\to \Sc$ over a connected base.

Let $\pi\colon \Xcal \to \Sc$ be a (relative) supercurve.
 Consider the diagram
\begin{equation}\label{eq:diagram}
\xymatrix{
X \ar@{^{(}->}[r]
^j\ar[rd]_{\tilde\pi}& \Xcal_S \ar@{^{(}->}[r] ^i\ar[d]^{\pi_S} & \Xcal\ar[d]^\pi\\
& S  \ar@{^{(}->}[r] ^i&\Sc
}
\end{equation}

Proceeding as in the absolute case one proves the following result:

\begin{prop}\label{p:split}  Every supercurve over an ordinary scheme is split, i.e., $\Oc_{\Xcal_S}\iso\Oc_X\oplus\Lcl^\Pi$ for a line bundle $\Lcl$ on $X$ (here $S$ is an ordinary scheme).
\qed
\end{prop}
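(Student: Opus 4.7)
The plan is to mimic the proof of Proposition~\ref{prop:curvesplit}, exploiting that a relative supercurve over an ordinary base has exactly one odd direction and that the base carries no odd sections that could interfere. Once the ideal of odd elements is shown to be square-zero and to sit entirely in odd degree, the splitting comes for free from the $\Z_2$-grading.

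First, I would establish a local normal form. Since $\pi_S\colon \Xcal_S\to S$ is smooth of relative dimension $(1,1)$ and $S$ is ordinary, the characterization of smoothness given after Definition~\ref{def:smooth} (applied to the fibres, together with the relative version of ``locally split'') provides, around every point of $X$, a Zariski open $\U\subseteq \Xcal_S$ with underlying ordinary open $U\subseteq X$ on which there is an isomorphism
\begin{equation*}
\Oc_{\Xcal_S}|_\U \simeq \Oc_X|_U[\theta],\qquad \theta^2=0,
\end{equation*}
with $\theta$ a single odd relative coordinate. The fact that the even part of this local model is exactly $\Oc_X|_U$ uses crucially that $\Oc_\Sc$ is purely even, so there are no odd sections from the base that could augment $(\Oc_{\Xcal_S})_0$ beyond $\Oc_X|_U$.

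From this local description, the ideal $\Jc\subset\Oc_{\Xcal_S}$ generated by odd sections equals $\Oc_X|_U\cdot\theta$ locally, hence $\Jc^2=0$ and $\Jc\subseteq (\Oc_{\Xcal_S})_1$. Both properties are local, so they hold globally. Since by definition $(\Oc_{\Xcal_S})_1\subseteq \Jc$ as well, one obtains the identification $\Jc=(\Oc_{\Xcal_S})_1$ on all of $\Xcal_S$. Because $\Jc$ lies entirely in odd degree, the canonical projection $\Oc_{\Xcal_S}\to\Oc_{\Xcal_S}/\Jc=\Oc_X$ restricts to an isomorphism $(\Oc_{\Xcal_S})_0\iso \Oc_X$ of sheaves of even $\Oc_S$-algebras, which gives a canonical splitting of the extension
\begin{equation*}
0\to (\Oc_{\Xcal_S})_1\to \Oc_{\Xcal_S}\to \Oc_X\to 0.
\end{equation*}

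Setting $\Lcl:=\Jc=\Jc/\Jc^2$, viewed as an $\Oc_X$-module via the isomorphism $(\Oc_{\Xcal_S})_0\iso \Oc_X$, the local model shows that $\Lcl$ is invertible on $X$. The decomposition by parity then reads $\Oc_{\Xcal_S}\iso \Oc_X\oplus \Lcl^\Pi$, and the multiplication $(\Oc_{\Xcal_S})_1\otimes (\Oc_{\Xcal_S})_1\to (\Oc_{\Xcal_S})_0$ factors through $\Jc^2=0$, so this is the exterior algebra structure $\bigwedge_{\Oc_X}\Lcl$ and $\Xcal_S$ is split. The only step needing care is the first one, the existence of the local normal form $\Oc_X[\theta]/(\theta^2)$ over an ordinary base; everything afterwards is purely formal bookkeeping with the $\Z_2$-grading and the fact that $\Jc$ and $(\Oc_{\Xcal_S})_1$ coincide.
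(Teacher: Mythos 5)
Your argument is correct and is essentially the paper's own route: the paper proves this by "proceeding as in the absolute case," i.e., as in Proposition~\ref{prop:curvesplit}, whose proof is exactly your chain (odd rank one forces $\Jc=(\Oc_{\Xcal_S})_1$ with $\Jc^2=0$, hence $(\Oc_{\Xcal_S})_0\iso\Oc_X$ and $\Lcl=\Jc/\Jc^2$ invertible). Your only addition is spelling out the local normal form $\Oc_X|_U[\theta]$ over an ordinary base, which is the content of local splitness in relative odd dimension one and is the same observation the paper relies on implicitly.
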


The splitting gives a retraction $p\colon \Xcal_S\to X$ of the immersion $j$.
Assume that $\Sc$ is split, $\Sc=(S,\bigwedge\Ec)$, so that the natural closed immersion $i\colon S\hookrightarrow\Sc$ has a retraction $\rho\colon \Sc\to S$, $\rho\circ i=Id$. In this case we have another relative split curve
$$
\rho^\ast \Xcal_S=(X,\bigwedge(\tilde\pi^\ast\Ec\oplus\Lcl^\pi))\to \Sc
$$

\begin{prop}\label{p:locsplit} 
The relative supercurves $\pi\colon\Xcal \to \Sc$ and $\rho^\ast \Xcal_S\to \Sc$ are locally isomorphic on the source, that is, there is a covering of $\Xcal$ by open sub-superschemes $\U\hookrightarrow \Xcal$, such that $\U\iso \rho^\ast \U_S$. That is,
$$
\Oc_{\U}\iso \bigwedge(\rest{\tilde\pi^\ast\Ec}U\oplus \rest{\Lcl}U)\iso \Oc_{\U_S}\oplus \Lcl^\pi_{\U_S}\,,
$$
where $\Lcl_{\U_S}$ is the line bundle on $\U_S$ induced by projection $p\colon\U_S \to U$.
\end{prop}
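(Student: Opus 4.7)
The plan is to view $\pi\colon \Xcal\to\Sc$ as a deformation of the relative supercurve $\pi_S\colon \Xcal_S\to S$ (which is split by Proposition \ref{p:split}) with base the canonical closed immersion $i\colon S\hookrightarrow \Sc$, and to derive the local isomorphism from the relative form of Proposition \ref{p:defsmooth}, which the discussion following Proposition \ref{relksmap1} asserts remains valid when the base scheme is affine. The splitness hypothesis on $\Sc$ enters precisely through the retraction $\rho\colon\Sc\to S$ of $i$, which selects $\rho^\ast\Xcal_S$ as the canonical model of the trivial deformation against which local triviality is measured.

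Because the claim is local on $\Xcal$, I would first fix a closed point $p\in X$ and shrink $S$ to an affine open neighbourhood $V$ of $\tilde\pi(p)$. Replacing $\Sc$ by the split affine open sub-superscheme $\Vc=(V,\bigwedge_{\Oc_V}\rest{\Ec}{V})$ and $\Xcal$ by $\Xcal_V=\pi^{-1}(\Vc)$, the morphism $\Xcal_V\to\Vc$ becomes a deformation of the split supercurve $\Xcal_{V,S}\to V$ along $V\hookrightarrow\Vc$, now with affine base, so that the relative version of Proposition \ref{p:defsmooth} applies.

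That proposition furnishes an open cover $\{U_i\}$ of $\tilde\pi^{-1}(V)$ and isomorphisms of $\Vc$-superschemes $\U_i \iso \rest{\Xcal_{V,S}}{U_i}\times_V \Vc$, where $\U_i$ denotes the open sub-superscheme of $\Xcal_V$ supported on $U_i$. Since the retraction $\rho\colon\Sc\to S$ restricts to a retraction $\Vc\to V$, the right-hand side is $\rho^\ast(\rest{\Xcal_S}{U_i})$, yielding the desired local isomorphism on the source. The explicit formula for the structure sheaf then follows from the tensor-product computation
$$
\Oc_{\U_i} \iso \Oc_{\rest{\Xcal_S}{U_i}}\otimes_{\Oc_S}\Oc_\Sc \iso \textstyle\bigwedge_{\Oc_X}(\rest{\Lcl}{U_i})\otimes_{\Oc_X}\textstyle\bigwedge_{\Oc_X}(\rest{\tilde\pi^\ast\Ec}{U_i}) \iso \textstyle\bigwedge_{\Oc_X}(\rest{\Lcl}{U_i}\oplus\rest{\tilde\pi^\ast\Ec}{U_i}),
$$
combining the splitness of $\Xcal_S$ over $S$ (Proposition \ref{p:split}) and of $\Sc$ over $S$.

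The main point requiring care is verifying that the trivialization produced by the relative Proposition \ref{p:defsmooth} identifies $\Xcal_V$ (locally on the source) precisely with the base change $\rho^\ast\Xcal_S$ rather than with some a priori different extension of $\Xcal_S$ to $\Sc$; this is automatic once the retraction $\rho$ is fixed, which is why the splitness of $\Sc$ is hypothesized. Beyond this, the argument reduces to the bookkeeping of the reduction to an affine base and the elementary identification of exterior algebras displayed above.
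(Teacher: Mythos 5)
Your argument is correct, but it takes a genuinely different route from the paper. The paper's proof is a direct computation with the odd ideals: writing $\Jc_\Sc$, $\Jc_\Xcal$ for the ideals generated by the odd elements and $\F=\Jc_\Xcal/\Jc_\Xcal^2$, it shows that $\Jc_\Xcal^2=\Jc_\Sc^2\cdot\Oc_\Xcal$ and obtains the conormal exact sequence $0\to\tilde\pi^\ast\Ec\to\F\to\Lcl\to 0$ of $\Oc_X$-modules (using flatness to identify $\Jc_\Sc\cdot\Oc_\Xcal/\Jc_\Sc^2\cdot\Oc_\Xcal$ with $\tilde\pi^\ast\Ec$); since $\Lcl$ is locally free this sequence is locally split, and local splitness of the smooth superscheme $\Xcal$ ($\Oc_\U\iso\bigwedge\rest{\F}{U}$) then gives the statement. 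You instead invoke the relative form of Proposition \ref{p:defsmooth}, i.e.\ rigidity of smooth affine morphisms under the nilpotent thickening $S\hookrightarrow\Sc$, with $\rho^\ast\Xcal_S$ playing the role of the trivial deformation. Your route is conceptually cleaner, automatically produces an isomorphism \emph{over} $\Sc$ restricting to the identity on $\Xcal_S$ (a point the paper's proof leaves implicit), and would extend verbatim to higher odd relative dimension; the paper's route is more elementary and self-contained, and it exhibits explicitly where the two summands $\tilde\pi^\ast\Ec$ and $\Lcl$ of the conormal sheaf come from, which is reused later (e.g.\ in Proposition \ref{p:localsplitdiv}). The one place where you should be explicit is the scope of the relative Proposition \ref{p:defsmooth}: the paper only asserts it in passing, and as a statement about arbitrary pointed bases it is false (non-isotrivial families); what makes it valid here is precisely that $i\colon S\hookrightarrow\Sc$ is a \emph{nilpotent} closed immersion (the augmentation ideal of $\bigwedge\Ec$ is nilpotent since $\Ec$ is locally free of finite rank), so the Sernesi-type lifting argument applies, and that the retraction $\rho$ fixes the meaning of ``trivial.'' You flag the second point; the first deserves a sentence as well. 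With that made explicit, your proof is complete, and the closing exterior-algebra identification $\bigwedge\Lcl\otimes_{\Oc_X}\bigwedge\tilde\pi^\ast\Ec\iso\bigwedge(\Lcl\oplus\tilde\pi^\ast\Ec)$ is correct.
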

\begin{proof}
Let $\Jc_\Sc$, $\Jc_\Xcal$ be the ideals generated by the elements of degree 1 in $\Oc_\Sc$ and $\Oc_\Xcal$, so that $\Ec=\Jc_\Sc/\Jc_\Sc^2$ and $\Oc_\Xcal$ is locally isomorphic to $\bigwedge \F$ with $\F=\Jc_\Xcal/\Jc_\Xcal^2$. Since $\Oc_{\Xcal_S}\iso \Oc_X\oplus \Lcl^\Pi$ by Proposition \ref{p:split}, $\Lcl^\Pi$ is the ideal of $X$ into $\Xcal_S$, and one has
$$
0\to \Jc_\Sc\cdot\Oc_\Xcal\to \Jc_\Xcal \to \Lcl\to 0\,.
$$
as sheaves on $X$. The condition $\Lcl^2=0$ in $\Oc_{\Xcal_S}$ gives $\Jc_\Xcal^2\subseteq \Jc_\Sc\cdot\Oc_\Xcal$, and then, $\Jc_\Sc^2\cdot\Oc_\Xcal=\Jc_\Xcal^2$. It follows that we have an exact sequence of $\Oc_X$-modules
$$
0 \to \Jc_\Sc\cdot\Oc_\Xcal/\Jc_\Sc^2\cdot\Oc_\Xcal \to \F \to \Lcl\to 0\,.
$$
Moreover, by flatness $\Jc_\Sc\cdot\Oc_\Xcal/\Jc_\Sc^2\cdot\Oc_\Xcal\iso \tilde\pi^\ast \Ec$. Since $\Lcl$ is locally free, the exact sequence is locally split, so that there is an open subset $U\subseteq X$ such that 
$$
\rest\F{U}\iso \rest {\tilde\pi^\ast\Ec}{U}\oplus \rest{\Lcl}{U}.
$$
Taking $U$ small enough, we can assume that $\Oc_\U\iso \bigwedge \rest\F{U}$ and the result follows.
\end{proof}

\begin{corol}[Local relative graded coordinates]\label{c:relcoor} Let $\pi\colon \Xcal \to \Sc$ be a (relative) supercurve. Take a covering of $\Sc$ by splitting neighbourhoods $\Vc$. Locally on $\pi^{-1}(\Vc)$, $\pi$ is described by relative graded coordinates $(z,\theta)$, where $z\in \Oc_X(U)$, and $\theta$ is a local basis of $\Lcl^\Pi_{\Vc\vert U}$.
\qed\end{corol}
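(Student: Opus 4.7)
The plan is a short unwinding of Proposition \ref{p:locsplit} combined with the existence of local coordinates on smooth morphisms of relative dimension one. First I would take a splitting open sub-superscheme $\Vc\hookrightarrow\Sc$, so that $\Oc_\Vc\simeq\bigwedge\Ec$ for some locally free $\Ec$ on $V$. Applying Proposition \ref{p:locsplit} to the restriction $\rest{\pi}{\Vc}$ yields a covering of $\pi^{-1}(\Vc)$ by open sub-superschemes $\U$ whose underlying opens $U\subseteq X_V$ satisfy
$$
\Oc_\U\iso \textstyle{\bigwedge}\bigl(\rest{\tilde\pi^\ast\Ec}U\oplus \rest{\Lcl}U\bigr).
$$

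Next, since $\pi\colon\Xcal\to\Sc$ is smooth of relative dimension $(1,1)$, the underlying morphism $\tilde\pi\colon X\to S$ is smooth of relative dimension $1$, so after shrinking $U$ if necessary I can pick an even function $z\in \Oc_X(U)$ whose differential generates $\rest{\Omega_{X/S}}U$, by the standard existence of \'etale coordinates on a smooth relative curve. Shrinking $U$ once more, the line bundle $\rest{\Lcl}U$ admits a trivialisation $\theta\in \Gamma(U,\Lcl)$, which I regard as an odd section of $\Oc_\U$ through the inclusion $\Lcl^\Pi\hookrightarrow\Oc_\U$ supplied by the splitting above.

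The pair $(z,\theta)$ then furnishes the desired relative graded coordinates on $\U$: $dz$ generates the even part of $\rest{\Omega_{\Xcal/\Sc}}U$ along fibres of $X/S$, and $d\theta$ generates its odd part, which matches the $\Lcl^\Pi$ summand. The only point worth emphasising, and it is not a real obstacle, is that the $\tilde\pi^\ast\Ec$ summand appearing in the splitting is pulled back from $\Sc$ and therefore contributes nothing to $\Omega_{\Xcal/\Sc}$, so it is correctly absent from the coordinate list. No serious difficulty is expected: the corollary is essentially this unwinding, with the only substantive ingredients being Proposition \ref{p:locsplit} and the existence of a local relative coordinate on the ordinary smooth curve $\tilde\pi\colon X\to S$.
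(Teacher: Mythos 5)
Your proposal is correct and is exactly the unwinding the paper intends: the corollary is stated without proof precisely because it follows immediately from Proposition \ref{p:locsplit} together with the existence of a local relative coordinate $z$ on the smooth ordinary curve $\tilde\pi\colon X\to S$ and a local trivialisation $\theta$ of $\Lcl$. Your remark that the $\tilde\pi^\ast\Ec$ summand is pulled back from the base and hence contributes nothing to $\Omega_{\Xcal/\Sc}$ is the right observation to make the claim that $(dz,d\theta)$ is a basis of the relative differentials fully explicit.
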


\bigskip
\section{Relative SUSY curves with \RR\ punctures}\label{s:RRsusy}

Our next step is to define supersymmetric curves with \RR\ punctures. SUSY curves are
supercurves with a conformal structure, i.e., an odd distribution which is maximally nonintegrable.
SUSY curves with \RR\ punctures   have  a ``quasi'' conformal structure: an odd distribution which is maximally nonintegrable  in the complementary of a superdivisor, which is interpreted as the \RR\ puncture.
Thus, in spite of their name, SUSY curves with \RR\ punctures are not SUSY curves, but rather degenerations of them.

\subsection{SUSY curves}
We recall the definition of a SUSY curve.
\begin{defin} \label{def:SUSYrel}
We say that a relative supercurve $\pi\colon\Xcal\to \Sc$ is  \emph{supersymmetric, or a SUSY curve}, if is there is a locally free submodule $\Dcal\hookrightarrow \Theta_{\Xcal/\Sc}$ of rank $(0,1)$ such that 
the composition map
$\Dcal\otimes_{\Oc_{\Sc}}\Dcal \xrightarrow{[\,,\,]}  \Theta_{\Xcal/\Sc}\to  \Theta_{\Xcal/\Sc}/\Dcal$
induces an isomorphism of $\Oc_{\Xcal}$-modules
$$
\Dcal\otimes_{\Oc_{\Xcal}}\Dcal\iso  \Theta_{\Xcal/\Sc}/\Dcal\,.
$$
$\Dcal$ is a \emph{conformal structure} for $\pi\colon\Xcal\to \Sc$.
\end{defin}

\subsection{Superdivisors}

To define \RR\ punctures we need a suitable definition of superdivisor.
Let $\Sc=(S,\Oc_\Sc)$ be a superscheme and $\pi\colon \Xcal\to \Sc$ a relative supercurve (Definition \ref{def:supercurverel}).

\begin{defin}
A relative positive superdivisor of degree $n$ is a closed sub-superscheme $\Zc=(Z,\Oc_\Zc)\hookrightarrow \Xcal$ whose ideal is a line bundle $\Oc_\Xcal(-\Zc)$ of rank $(1,0)$ and whose structure sheaf $\Oc_\Zc$ is a finite flat $\Oc_\Sc$-module of rank $(n,n)$ (see \cite{DoHeSa93}, Definition 5).
\end{defin}

If $(z, \theta)$ is a system of relative graded local coordinates  (Corollary \ref{c:relcoor}), $\Oc_\Xcal(-\Zc)$ is locally generated by an element of type 
\begin{equation}\label{eq:localeq}
f=z^n+(a_1+b_1\theta)z^{n-1}+\dots+(a_n+b_n\theta)\,,
\end{equation}
 where the $a_i's$ are even and the 
$b_i's$ are odd elements in $\Oc_\Sc$. Here $n$ is the degree of the restriction of $\Zc$ to the open sub-superscheme of $\Xcal$ where $(z,\theta)$ are graded coordinates.

Since the projection $\Zc\to \Sc$ is finite and flat, the underlying scheme morphism $Z\to S$ is a covering of degree $n$. Then, there is a dense open subset $S'$ of $S$ such that the fibre $Z_s$ over $s\in S'$ is a reduced divisor of $X_s$, i.e., it  consists of $n$ different points, $Z_s=x_1+\dots +x_n$. The fibre over a point $s$ of the ramification locus $S-S'$ (which is closed of codimension 1 by Zariski's purity theorem \cite{Zariski-Purity}) is a divisor $Z_s=p_1 x_1+\dots +p_q x_q$ with $n=p_1 +\dots +p_q$ and at least one of the coefficients greater than 1.
If we restrict $\Xcal \to \Sc$ and $\Zc$ to the nonramification locus $\Sc'=\rest{\Sc}{S'}$
we have a nonramified superdivisor $\Zc'$ of $\Xcal'\to \Sc'$.  Then, we can find a system of relative graded local coordinates around each point in the support of the superdivisor  so that the equation of the divisor is
\begin{equation}\label{eq:localeq1}
z=0\,.
\end{equation}
To see this, one notes that for suitable $(z, \theta)$ Equation \eqref{eq:localeq} takes the form
$f=z + b\theta$
for an odd element $b$ in $\Oc_{\Sc'}$. Since $(z+b\theta,\theta)$ is also a  system of relative graded local coordinates, we get the desired expression.

This means that the splitting $\Oc_{\Xcal_S}\iso\Oc_X\oplus\Lcl$ and the corresponding retraction  $p\colon \Xcal_S\to X$  ca be chosen so that the local isomorphism $\Xcal\iso \rho^\ast \Xcal_S$ given by Proposition \ref{p:locsplit} transforms $\Zc$ to the superdivisor $\bar\rho^\ast Z$ (where $\bar\rho=p\circ \rho$) induced by the underlying ordinary divisor $Z\hookrightarrow X$.
In other words,

\begin{prop} \label{p:localsplitdiv} Let $\Zc$ a relative superdivisor on a supercurve $\pi\colon \Xcal\to \Sc$ such that $\Zc\to \Sc$ is nonramified.  Then the pair $(\Xcal \to \Sc, \Zc)$ is locally (on the source $\Xcal$) isomorphic to $(\rho^\ast \Xcal_S\to\Sc, \bar\rho^\ast Z)$, that is, locally  one has
$$
\Oc_{\Xcal}\iso \bigwedge(\tilde\pi^\ast\Ec\oplus \Lcl)\iso \Oc_{\Xcal_S}\oplus \Lcl_{\Xcal_S}\,,
$$
where $\tilde\pi\colon X \to S$ is the underlying family of ordinary curves, $\Lcl_{\Xcal_S}=p^\ast\Lcl$ and  $$\Oc_\Xcal(-\Zc)\iso \bar\rho^\ast \Oc_X(-Z).$$ One can choose relative graded coordinates $(z,\theta)$ such that $\Zc$ is given by the equation $z=0$.
\qed
\end{prop}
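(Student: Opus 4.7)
The plan is to combine Proposition \ref{p:locsplit}, which locally identifies $\Xcal$ with $\rho^\ast\Xcal_S$ and produces relative graded coordinates $(z,\theta)$, with a careful change of coordinates that puts the equation of $\Zc$ into the normal form $z=0$. The proof essentially follows the computation already sketched in the paragraphs preceding the statement; I would organize it as follows.

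First, fix a point $x$ in the support of $\Zc$ and apply Proposition \ref{p:locsplit} to obtain an open sub-superscheme $\U\hookrightarrow\Xcal$ containing $x$ together with a local isomorphism $\U\iso \rho^\ast \U_S$, which in turn yields a system of relative graded coordinates $(z,\theta)$ on $\U$ by Corollary \ref{c:relcoor}. In these coordinates, a local generator of the ideal $\Oc_\Xcal(-\Zc)$ has the general form of Equation \eqref{eq:localeq}, namely $f=z^n+(a_1+b_1\theta)z^{n-1}+\dots+(a_n+b_n\theta)$ for even $a_i$ and odd $b_i$ in $\Oc_\Sc$, where $n$ is the local degree of $\Zc\to\Sc$ at $x$.

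Second, I would use the nonramification hypothesis to reduce to $n=1$. Since $\Zc\to\Sc$ is nonramified and finite, shrinking $\U$ we may separate the fibre of $\Zc$ over the image of $x$ into its reduced points and consider the component through $x$, on which the local degree is $1$. Thus in the chosen coordinates the equation of $\Zc$ takes the linear form $f=z+b\theta$ for some odd $b\in\Oc_\Sc$. Now perform the coordinate change $z'=z+b\theta$, $\theta'=\theta$; this is a valid system of relative graded local coordinates, because $b\theta$ is even, and because modulo $\Jc_\Sc$ the differential $dz'$ reduces to $dz$ so that $(dz',d\theta')$ still generate $\Omega_{\Xcal/\Sc}$ freely over $\Oc_\Xcal$. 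In the new coordinates the divisor $\Zc$ is cut out by $z'=0$.

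Third, transport the local splitting $\U\iso\rho^\ast\U_S$ of Proposition \ref{p:locsplit} through the new coordinates. The retraction $\bar\rho=p\circ\rho\colon\U\to U$ sends $z'$ to the generator of the ideal of the underlying reduced divisor $Z\cap U=\{z'=0\}\hookrightarrow U\hookrightarrow X$, and hence identifies the ideal $(z')\subset\Oc_\U$ with $\bar\rho^\ast\Oc_X(-Z)$. This yields the required identification of pairs $(\U\to\Sc,\Zc\cap\U)\iso (\rho^\ast\U_S\to\Sc,\bar\rho^\ast Z)$, together with the statements about the splitting of $\Oc_\U$ and the last assertion on the equation of $\Zc$.

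The main obstacle is the second step: one must justify that the nonramification of $\Zc\to\Sc$ really allows us to drop all the higher-order terms in Equation \eqref{eq:localeq} and linearise the equation to $z+b\theta$, and then check that $z\mapsto z+b\theta$ is an admissible change of relative graded coordinates. Both points are local, and the first is essentially the fact that a nonramified finite cover is étale-locally a disjoint union of copies of the base, while the second is immediate from the Jacobian criterion applied to $(dz',d\theta)$.
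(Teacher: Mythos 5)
Your proposal is correct and follows essentially the same route as the paper, whose proof is the discussion immediately preceding the statement: use nonramification to reduce the local equation \eqref{eq:localeq} to degree one, so that (after absorbing the even constant term into $z$) it reads $f=z+b\theta$ with $b$ odd in $\Oc_\Sc$, observe that $(z+b\theta,\theta)$ is again a system of relative graded coordinates, and then transport the splitting of Proposition \ref{p:locsplit}. Your extra justifications (the Jacobian check for the coordinate change and the étale-local description of the nonramified cover) only make explicit what the paper leaves implicit.
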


\subsection{\RR\ SUSY curves}

\begin{defin} \label{def:RRrel}
Let $\pi\colon\Xcal\to \Sc$ be a relative supercurve, and $\Zc\hookrightarrow \Xcal$ a positive relative superdivisor. We say that $\pi\colon\Xcal\to \Sc$ is a \emph{\RR-SUSY curve (or an RR-SUSY curve) along $\Zc$}  if   there is a locally free submodule $\Dcal\hookrightarrow \Theta_{\Xcal/\Sc}$ of rank $(0,1)$ such that 
the composition map
$\Dcal\otimes_{\Oc_{\Sc}}\Dcal \xrightarrow{[\,,\,]}  \Theta_{\Xcal/\Sc}\to  \Theta_{\Xcal/\Sc}/\Dcal$
induces an isomorphism of $\Oc_{\Xcal}$-modules
$$
\Dcal\otimes_{\Oc_{\Xcal}}\Dcal\iso  (\Theta_{\Xcal/\Sc}/\Dcal)(-\Zc)\,.
$$
We then say that $\Dcal$ is a
 \emph{\RR\ conformal structure} for $(\pi\colon\Xcal\to\Sc,\Zc)$ or a  \emph{\RR\ conformal structure for $\pi\colon\Xcal\to\Sc$ along $\Zc$}.
\end{defin}

The \emph{irreducible components} of the superdivisor $\Zc$ are usually called the \emph{\RR\ punctures}.
Note that even these components may be ramified coverings of the base, that is, may intersect a fibre in a divisor with some points counted more than once.

By Proposition \ref{p:localsplitdiv}, if  $\Zc$ is \emph{nonramified} over $\Sc$ locally on $\Xcal$ one has
$$
\Oc_{\Xcal}\iso \bigwedge(\tilde\pi^\ast\Ec\oplus \Lcl)\iso \Oc_{\Xcal_S}\oplus \Lcl^\Pi_{\Xcal_S}\,,
$$
 and one can choose relative graded coordinates $(z,\theta)$ such that $\Zc$ is given by the equation $z=0$.

Let $\pi\colon\Xcal\to \Sc$ be a relative supercurve and $\Zc\hookrightarrow \Xcal$ a positive relative superdivisor.
If $\Dcal\hookrightarrow \Theta_{\Xcal/\Sc}$ is a locally free submodule of rank $(0,1)$ and 
$$
\Omega_{\Xcal/\Sc} \xrightarrow{\tau} \Bc:=\Dcal^{-1}\to 0
$$
is the dual projection,   the quotient sheaf $\Theta_{\Xcal/\Sc}/\Dcal$ is locally free of rank $(1,0)$ if and only if $\Dcal=\ker \tau$. Conversely, given a locally free quotient $\tau\colon \Omega_{\Xcal/\Sc} \to \Bc$ of rank $(0,1)$, the sheaf $\Dcal=\Ima \tau^\ast$ is a submodule of rank $(0,1)$ of $\Theta_{\Xcal/\Sc}$ and $\Theta_{\Xcal/\Sc}/\Dcal$ is locally free. Moreover, $\ker\tau$ is the submodule of the 1-forms vanishing on $\Dcal$.

\begin{prop}\label{p:caractberRR} 
 $\Dcal$ is a \RR\ conformal structure along $\Zc$ if and only if the composition
$$
\ker\tau \hookrightarrow  \Omega_{\Xcal/\Sc}\xrightarrow{d}  \Omega_{\Xcal/\Sc}\wedge \Omega_{\Xcal/\Sc} \xrightarrow{\tau\wedge\tau} \Bc^{\otimes 2}
$$
yields an isomorphism $\ker\tau\iso \Bc^{\otimes 2}(-\Zc)$. Moreover, in this case there is an isomorphism 
$$
\Bc= \Dcal^{-1}\iso\Ber(\Xcal/\Sc)(\Zc)\,,
$$
where $\Ber(\Xcal/\Sc) =\Ber(\Omega_{\Xcal/\Sc})$ is the relative Berezinian sheaf.
 
%
%
\end{prop}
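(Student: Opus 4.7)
The argument is local on $\Xcal$, so my plan is to work in a splitting neighbourhood equipped with relative graded coordinates $(z,\theta)$ (Corollary \ref{c:relcoor}). By Proposition \ref{p:locfree} I may choose a local odd generator of $\Dcal$ normalized to the form $D=\partial_\theta+b\,\partial_z$ with $b$ odd, and then the dual basis $D^*\in\Bc$ satisfies $\tau(dz)=bD^*$, $\tau(d\theta)=D^*$, so that $\ker\tau$ is locally generated by the even $1$-form $\omega=dz-b\,d\theta$. Note also the abstract duality of the exact sequences $0\to\Dcal\to\Theta_{\Xcal/\Sc}\to\Theta_{\Xcal/\Sc}/\Dcal\to 0$ and $0\to\ker\tau\to\Omega_{\Xcal/\Sc}\to\Bc\to 0$, which identifies $\ker\tau$ with $(\Theta_{\Xcal/\Sc}/\Dcal)^{-1}$ and so makes the isomorphisms $\Dcal^{\otimes 2}\iso(\Theta_{\Xcal/\Sc}/\Dcal)(-\Zc)$ and $\ker\tau\iso\Bc^{\otimes 2}(-\Zc)$ equivalent as statements about abstract sheaves.

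First I would compute the bracket side: using $\partial_\theta^2=0$ and $b^2=0$, a short calculation gives
\[
[D,D]/2=D^2=(\partial_\theta b+b\,\partial_z b)\,\partial_z=D(b)\,\partial_z,
\]
so modulo $\Dcal$ the bracket equals $2D(b)$ times a local basis of $\Theta_{\Xcal/\Sc}/\Dcal$. Consequently, the \RR\ condition of Definition \ref{def:RRrel} amounts locally to the assertion that $D(b)$ is a local generator of the ideal $\Oc_{\Xcal}(-\Zc)$. Dually, $d\omega=-db\wedge d\theta$ and $\tau(db)=(\partial_z b)\,\tau(dz)+(\partial_\theta b)\,\tau(d\theta)=D(b)\,D^*$, whence
\[
(\tau\wedge\tau)(d\omega)=-D(b)\,(D^*)^{\otimes 2}
\]
as a section of $\Bc^{\otimes 2}$. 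The map $\ker\tau\to\Bc^{\otimes 2}$ of the statement therefore factors through $\Bc^{\otimes 2}(-\Zc)$ and is an isomorphism onto it precisely when $D(b)$ generates $\Oc_{\Xcal}(-\Zc)$, which is the same condition obtained above; this establishes the equivalence. Since $d$, $\tau$ and $\wedge$ are globally defined, the locally produced isomorphism is canonical and glues.

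For the Berezinian statement I would invoke the multiplicativity of $\Ber$ on the exact sequence $0\to\ker\tau\to\Omega_{\Xcal/\Sc}\to\Bc\to 0$. Since $\ker\tau$ has rank $(1,0)$ one has $\Ber(\ker\tau)=\ker\tau$, while $\Ber(\Bc)=\Bc^{-1}$ because $\Bc$ has rank $(0,1)$. Thus $\Ber(\Xcal/\Sc)=\ker\tau\otimes\Bc^{-1}$, and substituting $\ker\tau\iso\Bc^{\otimes 2}(-\Zc)$ yields $\Ber(\Xcal/\Sc)\iso\Bc(-\Zc)$, i.e.\ $\Bc\iso\Ber(\Xcal/\Sc)(\Zc)$.

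The only delicate part is bookkeeping with the $\Z_2$-graded sign conventions in the wedge product of odd $1$-forms (where $d\theta\wedge d\theta$ need not vanish) when computing $(\tau\wedge\tau)(d\omega)$; this can only alter a nonzero scalar, so it never affects whether the map is an isomorphism onto $\Bc^{\otimes 2}(-\Zc)$, but one must be careful to verify that the composition with $d$ does land in the claimed subsheaf.
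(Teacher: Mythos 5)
Your proof is correct and follows essentially the same route as the paper: both reduce the two conditions to the statement that the even component of $[D,D]$ (your $D(b)$, the paper's coefficient $f$ in an adapted frame $(D_0,D_1)$) is a local generator of $\Oc_{\Xcal}(-\Zc)$, and both obtain the Berezinian identification by multiplicativity on the (dual of the) defining exact sequence. The only cosmetic difference is that the paper works with an abstract adapted basis and the identity $(\tau\wedge\tau)(d\omega_0)=\omega_0([D_1,D_1])\,\tau(\omega_1)^{\otimes 2}$, which sidesteps the coordinate sign bookkeeping you flag at the end.
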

\begin{proof} 
 Since $\Theta_{\Xcal/\Sc}/\Dcal$ is locally free, we can locally find  a   basis $(D_0,D_1)$ of $\Theta_{\Xcal/\Sc}$ with $D_0$ even and $D_1$ odd such that $\Dcal$ is   generated by $D_1$. If $(\omega_0,\omega_1)$ is the dual basis, $\ker\tau$ is generated by $\omega_0$ and $(\tau\wedge\tau)(d\omega_0)= d\omega_0(D_1,D_1)\tau(\omega_1)^{\otimes 2}= \omega_0([D_1,D_1])\tau(\omega_1)^{\otimes 2}$, as $\omega_0(D_1)=0$. If we write $[D_1,D_1]=fD_0+gD_1$, we get $(\tau\wedge\tau)(d\omega_0)=f \tau(\omega_1)^{\otimes 2}$. Now each of the two conditions of the statement is equivalent to $f$ being of the form $f=h\cdot e_Z$ where $h$ is invertible and $e_Z$ is a local generator of $\Oc_{\Xcal}(-\Zc)$. 
 
Since $\Ber(\Nc)\iso \Nc$ for a line bundle of rank $(1,0)$ and   $\Ber(\Nc)\iso \Nc^{-1}$ for a line bundle of rank $(0,1)$, taking Berezinians in the dual of the exact sequence of locally free sheaves
$$
0\to\Dcal\to \Theta_{\Xcal/\Sc} \to \Dcal\otimes_{\Oc_{\Xcal}}\Dcal(\Zc)\to 0
$$
we  obtain $\Ber(\Xcal/\Sc)\iso \Dcal^{-1}(-\Zc)$, which finishes the proof. 
\end{proof}

Therefore, RR-conformal structures may also be described in terms of the relative Berezinian sheaf (see also \cite{Del87}).

For SUSY curves, one easily sees that the local coordinates can be chosen so that $\Dcal$ is locally generated by $\frac{\partial}{\partial \theta}+\theta \frac{\partial}{\partial z} $. For RR-SUSY curves one only has:

\begin{prop}\label{p:localsplitRR}  Let $\pi\colon\Xcal\to \Sc$ be a relative curve, $\Zc\hookrightarrow \Xcal$ a positive relative superdivisor nonramified over $\Sc$, and $\Dcal$ a \RR\ conformal structure along $\Zc$. There exists an \'etale covering $\Tc\to\Sc$ with the following property: on the base-changed RR-SUSY curve $(\Xcal_{\Tc},\Zc_{\Tc},\Dcal_{\Tc})\to\Tc$ locally (in the Zariski topology of the source) there are  relative graded coordinates $(z,\theta)$ such that
$\Zc_{\Tc}$ is given by the equation $z=0$, and $\Dcal_\Tc$ is locally generated by
\begin{equation}\label{eq:localD}
D=\frac{\partial}{\partial \theta}+z\theta \frac{\partial}{\partial z}\,.
\end{equation}
These coordinates are called \emph{superconformal}. Then, an RR-SUSY curve is locally  trivial on the source \emph{in the \'etale topology}, that is, locally on $\Xcal_{\Tc}$ it is isomorphic to
$\Xcal_{\Tc}=(X,  \Oc_{\Xcal_T}\oplus \theta\Oc_{\Xcal_T})\to\Tc$ equipped with the superdivisor $z=0$ and the distribution 
$$
\Dcal=\langle \frac{\partial}{\partial \theta}+z\theta \frac{\partial}{\partial z} \rangle\,.
$$
\end{prop}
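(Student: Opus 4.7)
The plan is to bring a local generator $D$ of $\Dcal$ into the standard form $\partial_\theta + z\theta\,\partial_z$ by a local coordinate change on $\Xcal$, preceded by an étale base change on $\Sc$ whose role is to supply a square root of a certain unit.

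By Proposition \ref{p:localsplitdiv}, choose local graded coordinates $(z,\theta)$ on $\Xcal$ such that $\Zc$ is locally $z=0$. After rescaling a local generator of $\Dcal$ by an even unit (using Proposition \ref{p:locfree}), write $D = \partial_\theta + h\,\partial_z$ with $h$ odd, and expand $h = \alpha + \beta\theta$. A direct computation yields
\[
D^2 = \bigl(\beta + \alpha\alpha' + (\alpha\beta' - \beta\alpha')\theta\bigr)\partial_z ,
\]
and the RR-SUSY condition, which requires $D^2 \bmod \Dcal$ to generate $(\Theta_{\Xcal/\Sc}/\Dcal)(-\Zc)$, forces this coefficient to be $z$ times a unit. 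Examining the $\theta^0$ and $\theta^1$ components yields $\alpha = z\gamma$ (with $\gamma$ odd) and $\beta = zu$ (with $u$ an even unit), so
\[
D = \partial_\theta + z(\gamma + u\theta)\partial_z .
\]

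For the étale base change, consider the restriction $u_0 = u|_{\{z=0\}}$, an even unit of $\Oc_\Zc$, whose class is intrinsic to $\Dcal$ via the description of $\Bc = \Dcal^{-1}$ in Proposition \ref{p:caractberRR}. Let $\Tc \to \Sc$ be the étale cover---locally the degree-two cover $\Spec \Oc_\Sc[v]/(v^2 - u_0)$---over which $u_0$ acquires a square root $v_0$.

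On $\Xcal_\Tc$, propose the new coordinates
\[
\tilde\theta = v_0\,\theta ,\qquad \tilde z = c(z) + z\,c'(z)\,\gamma\,\theta ,
\]
together with the rescaling $\mu = 1/v_0$ of $D$. A direct super-computation shows that $\mu D\tilde\theta = 1$ holds automatically, while $\mu D\tilde z = \tilde z\tilde\theta$ reduces to the single ordinary differential equation
\[
z\,c'(z)\,(u + z\gamma\gamma') = u_0\,c(z) ,
\]
whose indicial condition at $z=0$ is $u_0 = v_0^2$, satisfied by construction. A formal-power-series solution $c(z) = z + O(z^2)$ therefore exists and provides the required local coordinate. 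The principal challenge is this last step: the bookkeeping of super-signs in the reduction to the ODE is delicate, and the étale cover $\Tc$ is essential because without $v_0 = \sqrt{u_0}$ the indicial equation would not match, so the only formal solution would carry a fractional power of $z$ incompatible with being a regular coordinate.
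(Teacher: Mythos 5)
Your argument follows essentially the same route as the paper's proof: coordinates with $\Zc=\{z=0\}$ from Proposition \ref{p:localsplitdiv}, the normalization $D=\partial_\theta+h\partial_z$, the RR condition forcing $h=z(\gamma+u\theta)$, a degree-two \'etale cover of the base supplying a square root of the leading unit, a rescaling of $\theta$ by that square root, and a first-order ODE for the new even coordinate --- the paper's $\beta_0$ and its equation $z_0=w\gamma\,\partial z_0/\partial w$ are your $u_0$ and $zc'(u+z\gamma\gamma')=u_0c$, and your version even retains the nilpotent correction $z\gamma\gamma'$ that the paper discards. Two minor points: the normalization $\alpha=z\gamma$, $\beta=zu$ is not read off componentwise but follows by substituting the $\theta^0$-equation $\beta=zv_0-\alpha\alpha'$ into the $\theta^1$-equation, which gives $\alpha v_0\in(z)$ and hence $\alpha\in(z)$; and since your \'etale cover is produced patch by patch, you still need the paper's closing appeal to the properness of $\pi$ to obtain a single $\Tc\to\Sc$ that works for all coordinate neighbourhoods (the remaining issue of upgrading the formal power-series solution $c$ to an actual local function is present in the paper's proof as well).
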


\begin{proof} We can assume that $S$ is affine. Take first $(w,\eta)$ such that, according to Proposition \ref{p:localsplitdiv},
 $\Zc$  has equation  $w=0$. Since $\Theta_{\Xcal/\Sc}/\Dcal$ is locally free, tensoring by an invertible sheaf if necessary, one can take a local generator  of $\Dcal$ of the form
$$
D'=\frac{\partial}{\partial \eta}+g \frac{\partial}{\partial w}\,,
$$
where $g$ is odd. Then $(\Theta_{\Xcal/\Sc}/\Dcal)(-\Zc)$ is generated by the class of $w\frac{\partial}{\partial w}$. Since
$$
\frac 12 [D',D']= \bigg(\frac{\partial g}{\partial \eta}+g \frac{\partial g}{\partial w}\bigg) \frac{\partial }{\partial w}=\frac{\partial g}{\partial \eta} \frac{\partial}{\partial w} \,,
$$
where the second equality follows from $g^2=0$, the condition of $\Dcal$ being a superconformal structure along $\Zc$ is equivalent to 
$$
\frac{\partial g}{\partial \eta}=w\beta\,,
$$
with $\beta$ even and invertible and not depending on $\eta$. Let us write $\beta=\beta_0+w\hat\beta$ where $\beta_0$ is an invertible function in $\Oc_\Sc(S)$.

We can now take an \'etale covering of the base such that there exist $\alpha$ invertible in $\Oc(\Tc)$ such that $\alpha^2=\beta_0$. So we can take $T=\Spec \Oc(S)[t]/(t^2-\beta_0)$ and $\Oc_\Tc=\Oc_\Sc[t]/(t^2-\beta_0)$.

If we consider the local coordinates $(w,\theta=\alpha\eta)$, the  local generator $D=\alpha^{-1}D'$ of $\Dcal$ takes the form
$$
D=\frac{\partial}{\partial \theta}+f \frac{\partial}{\partial w}
$$
where the coefficient $f$  satisfies
$$
\frac{\partial f}{\partial \theta}=w\gamma\,, \quad \text{with $\gamma=1+w\hat\gamma$}\,,
$$
$\hat\gamma$ being an even local section of $\Oc_{\Xcal_S}\iso \bigwedge(\tilde\pi^\ast\Ec)$.

We want to find new coordinates $(z,\theta)$ such that  $D$  
can be written as
$$
D=\frac{\partial}{\partial \theta}+z\theta \frac{\partial}{\partial z}\,.
$$
Since
$$
D=\frac{\partial}{\partial \theta}+\big(\frac{\partial z}{\partial\theta}+f \frac{\partial z}{\partial w}\big)\frac{\partial}{\partial z}
$$
 we  must  solve
$$
z\theta=\frac{\partial z}{\partial\theta}+f \frac{\partial z}{\partial w}
$$
for $z$. 
Let us write $f=f_1+\theta w\gamma$, $z=z_0+\theta z_1$, where the subscripts $0$ and $1$ stand for even and odd local sections of $\Oc_{\Xcal_S}\iso \bigwedge(\tilde\pi^\ast\Ec)$. We have two equations
$$
0=z_1+f_1\frac{\partial z_0}{\partial w}\,,\quad  z_0=w\gamma \frac{\partial z_0}{\partial w} - f_1 \frac{\partial z_1}{\partial w}\,.
$$
From the first equation we obtain $\frac{\partial\bar z_1}{\partial w}=-(\frac{\partial f_1}{\partial w} z_0+f_1\frac{\partial z_0}{\partial w})$ and then $f_1 \frac{\partial z_1}{\partial w}=0$, since $f_1^2=0$ and $f_1\frac{\partial f_1}{\partial w}=0$. The second equation now gives 
$$
 z_0=w\gamma \frac{\partial z_0}{\partial w}
$$
which has a solution as $\gamma=1+w\hat\gamma$.

To conclude, note that although the \'etale covering of the base depends on the coordinate neighbourhood chosen at the beginning of the proof, as  $\pi$ is proper there exists an \'etale covering of the base that works for all    coordinate neighbourhoods.
\end{proof}

\begin{remark}\label{rem:localsplitRR}  The 
disjoint union of the
Zariski open sub-superschemes
$\U$ of $\Xcal_\Tc$ where superconformal coordinates exist is an \'etale covering of   the original $\Xcal$. In the case of a single RR-SUSY curve  no \'etale covering of the base is needed (any connected \'etale covering of $\Spec k$ is the identity).
\end{remark}

\subsection{RR-SUSY curves over ordinary schemes}
 By Proposition \ref{p:split},  any relative supercurve
 $\Xcal \to S$   over an ordinary scheme $S$  is split, so that $\Oc_{\Xcal}\simeq \Oc_X\oplus \Lcl^\Pi$ for a line bundle $\Lcl$ on $X$.
There is a decomposition
\begin{equation}\label{eq:localdif}
\Omega_{\Xcal/S}\iso (\kappa_{X/S}\otimes_{\Oc_X}\Oc_\Xcal)\oplus (\Lcl^\Pi\otimes_{\Oc_X}\Oc_\Xcal)\,,
\end{equation}
of the relative differentials as sum of locally free sheaves of rank $(1,0)$ and $(0,1)$. Here we   write $\kappa_{X/S}=\Omega_{X/S}$ for the relative canonical line bundle of $X\to S$, and the parity-change functor establishes the correct parity. One then has
$$
\Ber(\Xcal/S)\iso (\kappa_{X/S}\otimes\Lcl^{-1})^\Pi\otimes_{\Oc_X}\Oc_\Xcal\,,
$$ 
so that the decomposition of $\Ber(\Xcal/S)$ as an $\Oc_X$-module into even and odd components is
$$
\Ber(\Xcal/S)\iso \kappa_{X/S}\oplus (\kappa_{X/S}\otimes\Lcl^{-1})\,.
$$

If $\Zc\hookrightarrow \Xcal$ is a positive relative divisor (that may be ramified), it follows from Equation \eqref{eq:localeq} that $\Zc$ is determined by the ordinary relative divisor $Z=\rest{\Zc}X$ to $X\to S$; then $\Oc_\Xcal(-\Zc)\iso \Oc_X(-Z)\otimes_{\Oc_X}\Oc_\Xcal$.

\begin{prop}\label{p:spinrel} 
The existence of an RR-conformal structure $\Dcal$ on $\Xcal\to S$ along $\Zc$  is equivalent to the existence of an isomorphism of $\Oc_X$-modules
$$
\Lcl\otimes\Lcl \iso\kappa_{X/S}(Z)\,.
$$
These isomorphisms are called \emph{\RR\ spin structures} or \emph{RR-spin structures}. That is, the existence of a \RR\ conformal structure $\Dcal$ on $\Xcal\to S$ along $\Zc$ is equivalent to the existence of an RR-spin structure on $X\to S$ along $Z$.
\end{prop}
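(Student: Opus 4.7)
The plan is to extract both directions of the equivalence from the Berezinian characterization in Proposition~\ref{p:caractberRR}, combined with the decomposition of $\Ber(\Xcal/S)$ displayed just before the statement and the identification $\rest{\Dcal}{X}\iso(\Lcl^{-1})^\Pi$ from Proposition~\ref{p:locfree}.

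For the forward direction, assume $\Dcal$ is an RR-conformal structure along $\Zc$. Proposition~\ref{p:caractberRR} yields $\Dcal^{-1}\iso\Ber(\Xcal/S)(\Zc)$ as $\Oc_\Xcal$-modules, and I would pull this back along the closed immersion $i\colon X\hookrightarrow\Xcal$. On the left, Proposition~\ref{p:locfree} applies (the quotient $\Theta_{\Xcal/S}/\Dcal$ is locally free, as it is isomorphic to $\Dcal^{\otimes 2}(\Zc)$), giving $i^\ast\Dcal\iso(\Lcl^{-1})^\Pi$ and hence $i^\ast\Dcal^{-1}\iso\Lcl^\Pi$. On the right, the stated formula $\Ber(\Xcal/S)\iso(\kappa_{X/S}\otimes\Lcl^{-1})^\Pi\otimes_{\Oc_X}\Oc_\Xcal$, combined with $i^\ast\Oc_\Xcal(\Zc)\iso\Oc_X(Z)$, yields $i^\ast(\Ber(\Xcal/S)(\Zc))\iso(\kappa_{X/S}(Z)\otimes\Lcl^{-1})^\Pi$. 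Since both sides are purely odd parity-shifts of rank-one $\Oc_X$-modules, the resulting isomorphism descends to the desired RR-spin isomorphism $\Lcl\otimes\Lcl\iso\kappa_{X/S}(Z)$.

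For the backward direction, given a spin isomorphism $s\colon\Lcl^{\otimes 2}\iso\kappa_{X/S}(Z)$, I would construct $\Dcal$ by local models and patch. Cover $X$ by affine opens $U$ trivializing $\Lcl$ by an even generator $\ell$ and equipped with a local coordinate $z$ of $X/S$ such that $U\cap Z$ is cut out by some $g(z)\in\Oc_X(U)$ (take $g$ a unit when $U\cap Z=\emptyset$); on $\U\subset\Xcal$ with graded coordinates $(z,\theta)$, where $\theta$ is the odd companion of $\ell$, write $s(\ell^{\otimes 2})=u\,dz/g(z)$ for a unit $u\in\Oc_X(U)^\times$ and define
\[
D_U=\frac{\partial}{\partial\theta}+u\,g(z)\,\theta\,\frac{\partial}{\partial z}\,.
\]
Using $\theta^2=0$, a direct computation gives $\tfrac{1}{2}[D_U,D_U]=u\,g(z)\,\partial_z$; since $g(z)\,\partial_z$ is a local generator of $(\Theta_{\Xcal/S}/\langle D_U\rangle)(-\Zc)$ and $u$ is a unit, the induced map $\langle D_U\rangle^{\otimes 2}\to(\Theta_{\Xcal/S}/\langle D_U\rangle)(-\Zc)$ is an isomorphism, which is exactly the RR-conformal condition on $\U$.

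The main obstacle is the globalization. A change of local data $(z,\ell)\mapsto(z',\ell')$ rescales $D_U$ by an even unit whose value is dictated by the transition cocycles of $\Lcl$, of $\kappa_{X/S}$ and of $\Oc_X(-Z)$; the compatibility of these cocycles with the \emph{global} spin isomorphism $s$ is precisely what ensures that the $\Oc_\Xcal$-submodule $\langle D_U\rangle\subset\Theta_{\Xcal/S}$ — as opposed to its specific generator $D_U$ — is intrinsic to the data $(\Xcal,\Zc,s)$. The local subsheaves therefore patch to a global $\Dcal$, and its RR-conformal property is local, hence established by the preceding bracket computation.
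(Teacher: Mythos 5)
Your forward direction is essentially the paper's: restrict the Berezinian isomorphism of Proposition \ref{p:caractberRR} to $X$, identify $\rest{\Dcal}{X}$ with $(\Lcl^{-1})^\Pi$ via Proposition \ref{p:locfree}, and read off $\Lcl\otimes\Lcl\iso\kappa_{X/S}(Z)$ from the odd part of the Berezinian. That part is fine.

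The backward direction has a genuine gap, and it sits exactly where you flag ``the main obstacle.'' You define local generators $D_U$ and then assert that the compatibility of the cocycles of $\Lcl$, $\kappa_{X/S}$ and $\Oc_X(-Z)$ with the global isomorphism $s$ ``is precisely what ensures'' that the submodules $\langle D_U\rangle$ patch; but this is the statement to be proved, not a proof, and with your stated normalization it is actually false. If $\theta'=\lambda\theta$ and $w=w(z)$, then on overlaps $\partial_{\theta'}=\lambda^{-1}\partial_\theta$ and $\theta'\partial_w=\lambda\,w'(z)^{-1}\theta\partial_z$ modulo $\theta^2=0$, while the compatibility of $s(\ell^{\otimes 2})=u\,dz/g$ across charts forces $u'=\lambda^2 u\,g'/(g\,w'(z))$. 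Feeding these into $D_{U'}=\partial_{\theta'}+u'g'\theta'\partial_w$ does \emph{not} produce a unit multiple of $D_U=\partial_\theta+ug\theta\partial_z$; one gets $D_{U'}=\lambda^{-1}D_U$ only if the coefficient is $u^{-1}g$ rather than $ug$, i.e., only if the coefficient is read off from the isomorphism $\Lcl^{-1}\iso\kappa_{X/S}^{-1}\otimes\Lcl(-Z)$ \emph{induced} by $s$, not from $s$ itself. So the patching both needs to be verified and needs the inverted coefficient. The paper's proof avoids the issue entirely: the induced isomorphism $\Lcl^{-1}\iso\Der_{\Oc_X}(\Oc_X,\Lcl(-Z))$ yields a single global morphism $\gamma\colon\Oc_\Xcal\otimes(\Lcl^{-1})^\Pi\to\Theta_{\Xcal}(-\Zc)\hookrightarrow\Theta_\Xcal$, one sets $\Dcal=\Ima(\partial\mapsto\partial+\gamma(\partial))$, which is manifestly a well-defined locally free rank-$(0,1)$ subsheaf, and the local computation $[D,D]=2fe_Z\partial_z$ is used only at the end to verify the RR-conformal condition. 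To repair your argument, either carry out the cocycle verification with the corrected coefficient, or (better) replace the local construction by this global one.
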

\begin{proof} If $\Dcal$ is an RR-conformal structure, by Proposition \ref{p:locfree}  one has $\rest{\Dcal}X\iso (\Lcl^{-1})^\Pi$, and  $\Ber(\Xcal/\Sc)\iso \Dcal^{-1}(-\Zc)$ by Proposition \ref{p:caractberRR}. Then $\kappa_{X/S}\otimes \Lcl^{-1}\iso \Lcl(-Z)$.

 For the converse, if  there is an isomorphism $\Lcl\otimes\Lcl \iso\kappa_{X/S}(Z)$, one has an induced isomorphism $\Lcl^{-1}\iso \kappa_{X/S}\otimes \Lcl (-Z)\simeq \Der_{\Oc_X}(\Oc_X,\Lcl(-Z))$, which gives a morphism
$$\gamma\colon \Oc_\Xcal \otimes (\Lcl^{-1})^\Pi \to \Theta_\Xcal(-\Zc)\hookrightarrow  \Theta_\Xcal\,.
$$
Moreover, the sections of $\Lcl^{-1}$ can be considered as odd derivations of $\Oc_\Xcal$, and we have then an injective morphism
$$
\begin{aligned}
\Oc_\Xcal \otimes (\Lcl^{-1})^\Pi & \hookrightarrow \Theta_\Xcal \\
\partial& \mapsto \partial+\gamma(\partial)\,.
\end{aligned}
$$
The image $\Dcal$ is a locally free submodule of rank $(0,1)$ and, if $(z,\theta)$ are (relative) local coordinates in an open subset $U$, $\Dcal$ is locally generated by $D=\frac{\partial}{\partial \theta}+\gamma(\frac{\partial}{\partial \theta})=\frac{\partial}{\partial \theta}+ f e_Z\theta 
\frac\partial{\partial z}$, where $f\in\Oc_X(U)$ is invertible and $e_Z\in\Oc_X(U)$ is a local generator of $\Oc_X(-Z)$. Since $[D,D]=2fe_Z\frac{\partial}{\partial z}$ and $f$ is invertible, $\Dcal$ is an RR-conformal structure along $\Zc$.
\end{proof}

Note that the existence of a \RR\ conformal structure forces the degree $\nf_R$ of $\Zc$ to be \emph{even}, because one has
\begin{equation}\label{eq:degRR}
2m=2g-2+\nf_R
\end{equation}
where $m=\deg \Lcl$.

If $(\Xcal, \Zc, \Dcal)$ is an RR-SUSY curve over $S$, so that $\Oc_\Xcal=\Oc_X\oplus\Lcl$, we write
$$
\Lcl=\kappa_{X/S}(Z)^{1/2}\,.
$$

One also has
\begin{equation} \label{eq:Dtwo}
\begin{aligned}
\Dcal\otimes\Dcal&\simeq (\kappa_{X/S}^{-1}\otimes\Oc_X(-Z))\otimes_{\Oc_X}\Oc_\Xcal\simeq \kappa_{X/S}(Z)^{-1}\oplus \kappa_{X/S}(Z)^{-1/2}\\
\Theta_{\Xcal/S}/\Dcal&\simeq \kappa_{X/S}^{-1}\oplus (\kappa_{X/S}(Z)^{-1/2}\otimes\Oc_X(Z))
\,,
\end{aligned}
\end{equation}
as sheaves of $\Oc_S$-modules, where the direct sum is the decomposition into even and odd parts.

By Proposition \ref{p:localsplitRR}  relative superconformal local coordinates  exist after an \'etale covering of the base. A local computation shows that after that covering
$$
\theta\otimes\theta=z^{-1}dz
$$
under the isomorphism $\Lcl\otimes\Lcl\iso \kappa_{X/S}(Z)$.

\subsection{Morphisms of SUSY curves with \RR\ punctures and base change}

A morphism $\phi\colon \Xcal'\to \Xcal$ of supercurves over $\Sc$
induces a morphism of $\Oc_\Xcal$-modules
$$\phi_\ast\colon \Theta_{\Xcal'/\Sc} \to\phi^\ast \Theta_{\Xcal/\Sc}=
\Theta_{\Xcal/\Sc}\otimes_{\Oc_\Xcal}\Oc_{\Xcal'},$$ so that a conformal
structure $\Dcal'$ on
$\Xcal'/\Sc$ defines a subsheaf $\phi_\ast\Dcal'$ of
$\Theta_{\Xcal/\Sc}\otimes_{\Oc_\Xcal}\Oc_{\Xcal'}$.

\begin{defin} Let $(\Xcal/\Sc,\Zc,\Dcal)$ and $(\Xcal'/\Sc,\Zc',\Dcal')$ be
RR-SUSY curves along relative positive superdivisors of degree $m$ over a superscheme $\Sc$. A morphism between them is a morphism $\phi\colon \Xcal'\to \Xcal$ of supercurves over $\Sc$ such
that $\phi_S(\Zc')=\Zc$ and $\phi_\ast\Dcal'\subseteq\phi^\ast\Dcal=\Dcal\otimes_{\Oc_\Xcal}\Oc_{\Xcal'}$.
\end{defin}

We can then consider the category of
RR-SUSY curves of genus $g$ along a relative positive superdivisor of degree $m$ over a superscheme
$\Sc$.
The automorphisms of $(\Xcal/\Sc,\Zc,\Dcal)$ are the automorphisms 
$\tau$ of $\Xcal$ as a supercurve over $\Sc$ such that $\tau_S(\Zc)=\Zc$ and $\Dcal$ and
$\tau_\ast\Dcal$ define the same subsheaf of
$\Theta_{\Xcal/\Sc}$.
In particular, when the base superscheme is an ordinary
scheme $S$, so that $\Oc_\Xcal\simeq\Oc_X\oplus\Lcl$, we have seen that the data $(\Xcal/S,\Zc,\Dcal)$ are equivalent to $(X/S,\Lcl, Z,\varpi)$, where $\varpi\colon \Lcl\otimes\Lcl \iso\kappa_{X/S}\otimes\Oc_X(Z)$ is an isomorphism of $\Oc_X$-modules. Then $\tau$ is just a pair 
$\tau=(\tau_0,\tau_1)$, where $\tau_0$ is an automorphism of the 
underlying family of curves $X/S$ with $\varpi(Z)=Z$, and $\tau_1$ is an automorphism of
$\Lcl$ as a line bundle, such that the isomorphism $\varpi$
 is preserved. In particular, if
$\tau_0=\Id$, then $\tau_1=\pm\Id$.

Let $(\Xcal/\Sc,\Zc,\Dcal)$  be an RR-SUSY curve along a relative positive superdivisor $\Zc$  over a superscheme $\Sc$. If $\phi\colon \Tc\to \Sc$ is a morphism of superschemes, base change gives a relative RR-SUSY curve $\Xcal_\Tc=\phi^\ast\Xcal \to \Tc$ and a relative positive superdivisor $\Zc_\Tc=\phi^\ast\Zc\hookrightarrow \Xcal_\Tc$ over $\Tc$. Moreover, since $\Theta_{\Xcal/\Sc}/\Dcal$ is locally free, $\Dcal_\Tc=\phi^\ast\Dcal$ is a locally free subsheaf of $\Theta_{\Xcal_\Tc/\Tc}$ of rank $(0,1)$ and it is an RR-conformal structure for $\Xcal_\Tc \to \Tc$ along $\Zc_\Tc$. 
We then have a relative RR-SUSY curve $\phi^\ast (\Xcal/\Sc,\Zc,\Dcal)=(\Xcal_\Tc/\Tc, \Zc_\Tc,\Dcal_\Tc)$ \emph{obtained by the base-change $\phi\colon \Tc\to \Sc$}.
In particular, if $i\colon S\hookrightarrow \Sc$ is the natural immersion of the underlying ordinary scheme, we obtain  a relative RR-SUSY curve  $(\Xcal_S/S,\Zc_S,\Dcal_S)$ over $S$, i.e., an RR-SUSY structure on the split supercurve $\Xcal_S\to S$.
By Proposition \ref{p:spinrel}, the restricted RR-SUSY curve $(\Xcal_S/S,\Zc_S,\Dcal_S)$ is equivalent to  a \RR-spin structure, that is, $\Oc_{\Xcal_S}\iso \Oc_X\oplus\Lcl$ for a line bundle $\Lcl$ on $X$, the superdivisor $\Zc_S$ is determined by its restriction $Z$ to $X$, 
$\Oc_{\Xcal_S}(\Zc_S)\iso \Oc_{X}(Z)\otimes_{\Oc_X}\Oc_{\Xcal_S}$,  the restriction $\rest{\Dcal}X$ of $\Dcal$  (and also of $\Dcal_S$) to $X$ is isomorphic to $\Lcl^{-1}$, and one has $\Lcl\otimes\Lcl\iso\kappa_{X/S}(Z)$.

\subsection{Infinitesimal deformations of an RR-SUSY curve}

Let $(\pi\colon\Xcal\to\Sc,\Dcal, \Zc)$ be an RR-SUSY curve.
From now on,  \emph{we always assume that $\Zc\to\Sc$ is nonramified}, so that the underlying divisor $Z$ intersects every fibre in $m$ different points. This ensures that every infinitesimal deformation of the pair $Z\hookrightarrow X\to S$ is locally trivial, something that we shall not use explicitly but will have an effect on the computation of the dimension of the moduli space using infinitesimal deformations. 

\begin{defin}\label{def:infdefRR} We define a sheaf 
$\Gc_\pi^{\nf_R}$ whose sections on an open set $U$ are the graded derivations $D'\in\Gamma(U,\Theta_{\Xcal/\Sc})$ that preserve $\Dcal$, that is,
$$
\Gc_\pi^{\nf_R}(U)=\{ D'\in \Gamma(U,\Theta_{\Xcal/\Sc}) \,\vert\, [D',D]\in \Dcal(U),\text{for every $D\in \Dcal(U)$}\}\,.
$$
When $\Xcal$ is a single curve we shall  use the notation $\Gc^{\nf_R}_\Xcal$.
\end{defin}

If we consider a single RR-SUSY curve $(\Xcal,\Dcal, \Zc)$,  from the discussion  before  Proposition \ref{ksmap1} we see that  the automorphisms of $\rest{{\Xcal_0}}{U_{ij}}\times \Spec k[\epsilon_0,\epsilon_1]$ that preserve the induced distribution $\Dcal\otimes 1$ are the ones defined by  $\alpha(a\otimes\lambda)= (a+ D'_{\alpha 0}(a)\epsilon_0+(-1)^{\vert a\vert}D'_{\alpha 1}(a)\epsilon_1)\lambda$ where  $D'_\alpha=D'_{\alpha 0}+D'_{\alpha 1}$ is a section of $\Gc_\Xcal^{\nf_R}$.
This follows from the expression $\alpha (\bar D)=\alpha\circ \bar D\circ \alpha^{-1}$ for any derivation $\bar D$ of $\Oc_\Xcal(U_{ij})[\epsilon_0,\epsilon_1]$. 

Summing up, one has:

\begin{prop}\label{relksmap1RR} Statements  analogous to Propositions \ref{ksmap1}  and \ref{relksmap1} about infinitesimal deformations and Kodaira-Spencer maps remain true  for RR-SUSY curves if one considers   $\Gc_\pi^{\nf_R}$ instead of the relevant cotangent sheaf.
\qed
\end{prop}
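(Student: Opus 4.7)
The plan is to mimic the proofs of Propositions \ref{p:defsmooth}, \ref{ksmap1}, \ref{relksmap1}, and \ref{ksmap2}, restricting throughout from arbitrary infinitesimal deformations of the underlying supercurve to those that preserve the additional RR-SUSY data, namely the distribution $\Dcal$ and the superdivisor $\Zc$. Note that $\Zc$ is recovered from $\Dcal$ as the vanishing locus of the composition in Proposition \ref{p:caractberRR}, so preserving $\Dcal$ automatically preserves $\Zc$. The key replacement for the full tangent sheaf $\Theta_{\Xcal_0}$ is the sheaf $\Gc_{\Xcal_0}^{\nf_R}$ of derivations preserving $\Dcal$.

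The first step is to establish an RR-SUSY analog of Proposition \ref{p:defsmooth}: every infinitesimal deformation of a smooth RR-SUSY curve $(\Xcal_0,\Dcal_0,\Zc_0)$ is locally trivial as an RR-SUSY curve. By Proposition \ref{p:defsmooth} applied to the underlying supercurve, $\Xcal_0$ admits an open cover on which the total space of the deformation is isomorphic to $\rest{{\Xcal_0}}{U_i}\times \Spec k[\epsilon_0,\epsilon_1]$, and one must refine this to a cover on which the distribution and the divisor also become product-like. Since $\Spec k[\epsilon_0,\epsilon_1]$ admits no nontrivial connected \'etale cover, Proposition \ref{p:localsplitRR} and Remark \ref{rem:localsplitRR} apply directly: on each deformed chart there exist superconformal coordinates $(z,\theta)$ in which $\Zc$ is given by $z=0$ and $\Dcal$ is generated by $\partial_\theta+z\theta\,\partial_z$. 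Matching these with analogous coordinates on $\rest{{\Xcal_0}}{U_i}$ produces the desired trivialization of the full RR-SUSY datum.

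The second step is the cocycle computation. On an overlap $U_{ij}$, the transition between two such trivializations is an automorphism of $\Oc_{\Xcal_0}(U_{ij})\otimes k[\epsilon_0,\epsilon_1]$ as a graded $k[\epsilon_0,\epsilon_1]$-algebra, reducing to the identity modulo $(\epsilon_0,\epsilon_1)$ and preserving the distribution $\Dcal_0\otimes 1$. As recalled in the paragraph preceding the statement, such automorphisms are in bijection with local sections $D'_\alpha \in \Gc^{\nf_R}_{\Xcal_0}(U_{ij})$ via $\alpha(a\otimes 1)=a+D'_{\alpha,0}(a)\epsilon_0+(-1)^{\vert a\vert}D'_{\alpha,1}(a)\epsilon_1$. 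The cocycle condition on triple overlaps and the coboundary identification of equivalent deformations are formally identical to those in the proof of Proposition \ref{ksmap1}, yielding the bijection
\begin{equation}
\mbox{\it Def}_{\mbox{\it \tiny inf}}(\Xcal_0,\Dcal_0,\Zc_0)\iso H^1(X,\Gc^{\nf_R}_{\Xcal_0}),
\end{equation}
which sends the trivial deformation to zero and is by definition the Kodaira-Spencer map in the RR-SUSY setting.

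The relative version is obtained by running the identical local-to-global procedure over an affine base $\Sc_0$, producing the isomorphism of sheaves of pointed sets $\mbox{\it Def}_{\mbox{\it \tiny inf}}(\Xcal_0/\Sc_0,\Dcal_0,\Zc_0)\iso R^1\pi_{0\ast}(\Gc^{\nf_R}_{\pi_0})$, and the factorization of the general Kodaira-Spencer map of Proposition \ref{ksmap2} through this relative one is formal once the local model is in place. The main obstacle is the first step, the compatibility of local trivializations with the conformal and divisorial structures: without Proposition \ref{p:localsplitRR} providing a standard local form after an \'etale base change, one could not separate the deformations of the underlying pair $(X,Z)$ from the superconformal part, and the \v{C}ech cocycle would fail to land in $\Gc^{\nf_R}$.
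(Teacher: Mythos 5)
Your proposal is correct and follows essentially the same route as the paper, whose entire justification is the paragraph preceding the statement: automorphisms of the trivial deformation preserving $\Dcal\otimes 1$ correspond exactly to sections of $\Gc^{\nf_R}_{\Xcal_0}$, so the gluing cocycles of a locally trivial deformation land in $H^1(X,\Gc^{\nf_R}_{\Xcal_0})$. The only difference is that you make explicit the local-triviality of the deformation \emph{as an RR-SUSY curve} (via superconformal coordinates from Proposition \ref{p:localsplitRR} and Remark \ref{rem:localsplitRR}), a step the paper leaves implicit; this is a useful clarification rather than a different argument.
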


\begin{prop}\label{def:infdef} Let $(\Xcal,\Dcal, \Zc)$ be a single RR-SUSY curve such that $Z$ consists of $\nf_R$ different points.
\begin{enumerate}
\item
There is an isomorphism
$$
\Gc_\Xcal^{\nf_R}\iso (\Theta_{\Xcal}/\Dcal)\otimes_{\Oc_\Xcal}\Oc_\Xcal(-Z)
$$
as sheaves of graded vector spaces. 
\item
There is an isomorphism of sheaves of graded vector spaces
$$
\Gc_\Xcal^{\nf_R}\iso \Dcal\otimes\Dcal\iso \kappa_{X}(Z)^{-1}\oplus \kappa_{X}(Z)^{-1/2}
\,,
$$ 
where we have written $\Lcl=\kappa_{X}(Z)^{1/2}$.
\end{enumerate}
\end{prop}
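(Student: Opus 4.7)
My plan is to prove (1) by identifying $\Gc_\Xcal^{\nf_R}$ with its image in $\Theta_\Xcal/\Dcal$ under the natural quotient $\Theta_\Xcal\to\Theta_\Xcal/\Dcal$, and then to obtain (2) by composing with the RR-SUSY bracket isomorphism and Equation \eqref{eq:Dtwo}. Concretely, I would introduce the morphism of sheaves of graded vector spaces
$$\phi\colon\Gc_\Xcal^{\nf_R}\hookrightarrow \Theta_\Xcal \twoheadrightarrow \Theta_\Xcal/\Dcal$$
obtained by restricting the quotient projection. Since $\Gc_\Xcal^{\nf_R}$ is the normalizer of $\Dcal$ in $\Theta_\Xcal$ and is \emph{not} stable under $\Oc_\Xcal$-multiplication in $\Theta_\Xcal$, this is only a morphism of sheaves of graded vector spaces; the claim is that $\phi$ is injective with image exactly $(\Theta_\Xcal/\Dcal)(-Z)=(\Theta_\Xcal/\Dcal)\otimes_{\Oc_\Xcal}\Oc_\Xcal(-Z)$, which then also transports an $\Oc_\Xcal$-module structure to $\Gc_\Xcal^{\nf_R}$.

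The heart of the argument is a local computation in the superconformal coordinates $(z,\theta)$ of Proposition \ref{p:localsplitRR} (together with Remark \ref{rem:localsplitRR}, which ensures no \'etale base change is needed in the absolute case), where $D=\partial_\theta+z\theta\,\partial_z$ generates $\Dcal$ and $Z=\{z=0\}$. Splitting a generic derivation $D'=\alpha\partial_z+\beta\partial_\theta$ by parity and applying the graded Leibniz rule, one obtains, for even $D'=a(z)\partial_z+b(z)\theta\partial_\theta$, the identity $[D',D]=(a+2bz-za')\theta\partial_z-bD$; and for odd $D'=a(z)\theta\partial_z+b(z)\partial_\theta$, the identity $[D',D]=(a+bz)\partial_z+zb'\theta\partial_\theta$. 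The normalizer condition $[D',D]\in\Dcal$ then forces $a+2bz-za'=0$ in the even case (so $a\in z\Oc_X=\Oc_X(-Z)$, with $b=(za'-a)/(2z)$) and $a+bz=0$ in the odd case (so $a=-bz$, with $b$ free). Projecting modulo $\Dcal$ gives $D'\equiv z\tilde a\,\partial_z$ (even) and $D'\equiv -2bz\theta\,\partial_z$ (odd), so as the parameters vary, $\phi$ surjects onto $(\Theta_\Xcal/\Dcal)(-Z)$.

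Injectivity of $\phi$ reduces to $\Gc_\Xcal^{\nf_R}\cap\Dcal=0$: if $fD\in\Gc_\Xcal^{\nf_R}$, then $[fD,D]=f[D,D]+(-1)^{|f|}D(f)D$, and since $D(f)D\in\Dcal$ automatically, the normalizer condition collapses to $f\,[D,D]\in\Dcal$, i.e.\ $2fz\partial_z\equiv 0$ in the locally free sheaf $\Theta_\Xcal/\Dcal$; this forces $fz=0$ and hence $f=0$, using that $z$ is a non-zero-divisor and that $\operatorname{char}k\neq 2$. Putting the two computations together proves (1). For (2), the RR-SUSY definition (Definition \ref{def:RRrel}) provides an $\Oc_\Xcal$-linear isomorphism $\Dcal\otimes\Dcal\iso(\Theta_\Xcal/\Dcal)(-Z)$ induced by the bracket; composing with (1) yields $\Gc_\Xcal^{\nf_R}\iso\Dcal\otimes\Dcal$, and the further decomposition as $\kappa_X(Z)^{-1}\oplus\kappa_X(Z)^{-1/2}$ is then read off from Equation \eqref{eq:Dtwo} specialized to $S=\Spec k$ with $\Lcl=\kappa_X(Z)^{1/2}$.

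The main obstacle will be the bookkeeping in the bracket calculation: the interplay between the $\partial_\theta$ and $z\theta\partial_z$ pieces of $D$ together with the signed Leibniz rule is precisely what produces the divisibility of $a$ by $z$ and hence the $(-Z)$ twist in the image of $\phi$. A sign error at this stage would collapse the statement back to the classical SUSY analogue, in which the normalizer maps onto all of $\Theta_\Xcal/\Dcal$ with no twist, so it is essential to carry out the computation with care.
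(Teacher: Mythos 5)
Your proposal is correct and follows essentially the same route as the paper: a local computation in the superconformal coordinates of Proposition \ref{p:localsplitRR} showing that the quotient projection $\Theta_\Xcal\to\Theta_\Xcal/\Dcal$ identifies $\Gc_\Xcal^{\nf_R}$ with $(\Theta_\Xcal/\Dcal)(-Z)$ (the paper expands $D'$ in the basis $(\partial_z,D)$ rather than $(\partial_z,\partial_\theta)$, but the bracket computation and the divisibility-by-$z$ conclusion are the same), followed by Equation \eqref{eq:Dtwo} for part (2). Your explicit verification that $\Gc_\Xcal^{\nf_R}\cap\Dcal=0$ is a welcome small addition that the paper leaves implicit in its parametrization.
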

\begin{proof} Let us denote by $\bar\pi$ the projection $\Theta_{\Xcal/S}\to \Theta_{\Xcal/S}/\Dcal$.

(1) Working locally in superconformal coordinates (Proposition \ref{p:localsplitRR} and Remark \ref{rem:localsplitRR}), let $D'=a\frac{\partial}{\partial z}+bD$ be a graded derivation. Then
\begin{equation}\label{dprimad}
[D',D]= (-a\theta-(-1)^{\vert a\vert }D(a)+2bz)\frac{\partial}{\partial z}+(-1)^{\vert b\vert }D(b) D\,,
\end{equation}
so that $D'$ is a section of $\Gc^{\nf_R}$
if and only if 
\begin{equation}
0=-a\theta-(-1)^{\vert a\vert }D(a)+2bz\,.
\label{eq:bzn}
\end{equation}
If $D'$ is even, then $a=a(z)$ and $D(a)=z\theta a'$, so that Equation \eqref{eq:bzn} gives $0=2bz-a\theta-z\theta a'$. Thus, $a$ is a multiple of $z$, so that $a= z \bar a(z)$, and $\bar\pi$ sends $D'$ to $z\bar a\bar\pi(\frac{\partial}{\partial z})$ and, since $b$ is determined by $a$, $\bar\pi$ induces an isomorphism
$$
\Gc^{\nf_R}_{\Xcal,0}\iso (\Theta_{\Xcal}/\Dcal)_0(-Z)\,.
$$
If $D'$ is odd, then $a=a_1(z)\theta$ and $D(a)=a_1$, which gives $0=2bz+a_1$. Then $a_1$ is a multiple of $z$ and one sees that $\bar\pi$ induces an isomorphism of $\Gc^{\nf_R}_1$ with the multiples of $z$ in  $(\Theta_{\Xcal}/\Dcal)_1$, that is, an isomorphism
$$
\Gc^{\nf_R}_{\Xcal,1}\iso(\Theta_{\Xcal}/\Dcal)_1(-Z)\,.
$$

(2) follows from Equation \eqref{eq:Dtwo}.
\end{proof}

\begin{corol}
$H^0(X,\Gc_\Xcal^{\nf_R})=0$ and
$$
\dim H^1(X,\Gc_\Xcal^{\nf_R})=(3g-3+\nf_R,2g-2+\nf_R/2)\,.
$$
\qed
\end{corol}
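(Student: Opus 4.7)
The plan is to reduce the computation to a classical Riemann--Roch calculation on the underlying ordinary curve $X$, exploiting the fact that the hard work has already been done in Proposition \ref{def:infdef}. That proposition identifies
\[
\Gc_\Xcal^{\nf_R}\simeq \kappa_X(Z)^{-1}\oplus \kappa_X(Z)^{-1/2}
\]
as a graded $\Oc_X$-module, the first summand being the even part and the second the odd part. Since cohomology commutes with finite direct sums and respects the $\Z_2$-grading, the computation of $H^0(X,\Gc_\Xcal^{\nf_R})$ and $H^1(X,\Gc_\Xcal^{\nf_R})$ splits accordingly, and it suffices to compute the cohomology of each line bundle separately.

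First I would compute the degrees. Using the spin relation $2m=2g-2+\nf_R$ from Equation \eqref{eq:degRR}, the line bundle $\kappa_X(Z)^{-1}$ has degree $-(2g-2+\nf_R)$ and $\kappa_X(Z)^{-1/2}$ has degree $-(g-1+\nf_R/2)$. Under the assumption $g\geq 2$ (with $\nf_R\geq 0$ even), both degrees are strictly negative, so $H^0$ of each summand vanishes on the smooth proper curve $X$. This proves the first assertion $H^0(X,\Gc_\Xcal^{\nf_R})=0$.

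For the dimension of $H^1$, I would apply Riemann--Roch: for a line bundle $L$ on $X$ of negative degree one has $h^0(L)=0$ and hence $h^1(L)=g-1-\deg L$. Thus
\[
h^1\bigl(\kappa_X(Z)^{-1}\bigr)=g-1+(2g-2+\nf_R)=3g-3+\nf_R,
\]
\[
h^1\bigl(\kappa_X(Z)^{-1/2}\bigr)=g-1+(g-1+\nf_R/2)=2g-2+\nf_R/2.
\]
Reading off the even and odd contributions yields the stated graded dimension $(3g-3+\nf_R,\,2g-2+\nf_R/2)$.

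There is no real obstacle here since Proposition \ref{def:infdef} has already extracted the geometric content; the only point to check is the hypothesis on $g$ needed for the vanishing of $H^0$, and one might alternatively invoke Serre duality $H^1(L)\simeq H^0(\kappa_X\otimes L^{-1})^\ast$ to express the answer in terms of global sections of $\kappa_X^2(Z)$ and $\kappa_X(Z)^{1/2}\otimes\kappa_X$, but the direct Riemann--Roch route is cleaner.
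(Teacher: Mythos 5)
Your proof is correct and is exactly the argument the paper leaves implicit behind its \qed: split $\Gc_\Xcal^{\nf_R}$ via Proposition \ref{def:infdef} into $\kappa_X(Z)^{-1}\oplus\kappa_X(Z)^{-1/2}$, observe both degrees are negative under the standing hypothesis $g\ge 2$, and apply Riemann--Roch to each summand. The degree bookkeeping and the identification of even/odd parts both match the paper's conventions.
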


We can now compute the infinitesimal deformations of an RR-SUSY curve.

\begin{corol}\label{cor:h1dimrel} Let $(\pi\colon\Xcal\to S,\Dcal, \Zc)$ be an RR-SUSY curve of genus $g$ over an ordinary scheme $S$. One has
$$
\begin{aligned}
\pi_\ast\Gc_\pi^{\nf_R}&=0\\
R^1\pi_\ast \Gc_\pi^{\nf_R} &\simeq [R^1\pi_\ast(\kappa_{X/S}(Z)^{-1})]\oplus [R^1\pi_\ast( \kappa_{X/S}(Z)^{-1/2})] \\
&\simeq  [R^1\pi_\ast(\kappa_{X/S}(Z)^{-1})]\oplus [R^1\pi_\ast( \Lcl^{-1})] \,,
\end{aligned}
$$
 where the direct sum is the decomposition between even and odd parts. Moreover,
$R^1\pi_\ast\Gc_\pi^{\nf_R}$ is a locally free $\Oc_S$-module of rank
$$
\rk (R^1\pi_\ast\Gc_\pi^{\nf_R})=(3g-3+\nf_R, 2g-2+\nf_R/2)\,.
$$
\qed
\end{corol}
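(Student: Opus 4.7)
The approach is to reduce the computation to classical pushforwards on the underlying ordinary family $\tilde\pi\colon X\to S$, via a relative version of Proposition \ref{def:infdef}.

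First I would extend Proposition \ref{def:infdef} to the relative setting. The composition
$$
\Gc_\pi^{\nf_R}\hookrightarrow\Theta_{\Xcal/S}\twoheadrightarrow\Theta_{\Xcal/S}/\Dcal
$$
is a canonically defined morphism of $\Oc_\Xcal$-modules, and I would verify that it factors as an isomorphism $\Gc_\pi^{\nf_R}\iso(\Theta_{\Xcal/S}/\Dcal)(-\Zc)$. This is a local claim on $\Xcal$. Although by Proposition \ref{p:localsplitRR} superconformal coordinates are only guaranteed to exist after an \'etale base change of $S$, the morphism is canonical and the property of being an isomorphism of coherent sheaves is \'etale-local; therefore the local computation of Equations \eqref{dprimad}--\eqref{eq:bzn} in the proof of Proposition \ref{def:infdef}(1) descends. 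Combined with the defining isomorphism $\Dcal\otimes\Dcal\iso(\Theta_{\Xcal/S}/\Dcal)(-\Zc)$ of the RR-SUSY structure (Definition \ref{def:RRrel}), this yields $\Gc_\pi^{\nf_R}\iso\Dcal\otimes_{\Oc_\Xcal}\Dcal$ as $\Oc_\Xcal$-modules. Using the splitting $\Oc_\Xcal\simeq\Oc_X\oplus\Lcl^\Pi$ of Proposition \ref{p:split} (valid because $S$ is ordinary) and Equation \eqref{eq:Dtwo}, restriction of scalars along $\Oc_X\hookrightarrow\Oc_\Xcal$ gives
$$
\Gc_\pi^{\nf_R}\simeq \kappa_{X/S}(Z)^{-1}\oplus\kappa_{X/S}(Z)^{-1/2}
$$
as an $\Oc_X$-module, the summands being the even and odd parts.

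Next I would push forward. Since $\Xcal$ and $X$ share the same underlying topological space and $\pi$, $\tilde\pi$ coincide as continuous maps, for any quasi-coherent $\F$ on $\Xcal$, viewed as an $\Oc_X$-module by restriction of scalars, one has $R^i\pi_\ast\F=R^i\tilde\pi_\ast\F$ as sheaves on $S$. Hence
$$
R^i\pi_\ast\Gc_\pi^{\nf_R}\simeq R^i\tilde\pi_\ast\kappa_{X/S}(Z)^{-1}\oplus R^i\tilde\pi_\ast\kappa_{X/S}(Z)^{-1/2}\,.
$$

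Finally I would compute each summand fibrewise. On $X_s$, the line bundles $\kappa_{X_s}(Z_s)^{-1}$ and $\kappa_{X_s}(Z_s)^{-1/2}$ have degrees $-(2g-2+\nf_R)$ and $-(g-1+\nf_R/2)$, both strictly negative since $g\ge 2$; thus $H^0$ vanishes on every fibre, while Riemann-Roch yields $\dim H^1=3g-3+\nf_R$ and $\dim H^1=2g-2+\nf_R/2$ respectively. As $\tilde\pi$ is smooth and proper of relative dimension $1$, $R^2\tilde\pi_\ast=0$, so cohomology and base change applies in top degree: $R^1\tilde\pi_\ast$ of each summand is locally free of the stated rank and its fibre at $s$ agrees with $H^1(X_s,\cdot)$. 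Descending to degree $0$ by base change once more, $\tilde\pi_\ast$ of each summand has zero fibres and therefore vanishes, giving $\pi_\ast\Gc_\pi^{\nf_R}=0$.

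The main obstacle is Step~1: establishing the relative version of Proposition \ref{def:infdef} when the local model via superconformal coordinates is only available after an \'etale base change of $S$. The resolution is to construct the comparison map globally and canonically (using only the exact sequences that define $\Gc_\pi^{\nf_R}$ and the RR-SUSY condition) and then to check the isomorphism assertion on an \'etale cover, where it reduces to the absolute case already treated in Proposition \ref{def:infdef}.
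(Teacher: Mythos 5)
Your proposal is correct and follows essentially the same route the paper intends: the corollary is left as an immediate consequence of Proposition \ref{def:infdef}, Equation \eqref{eq:Dtwo} and cohomology and base change, which is exactly your reduction to the fibrewise Riemann--Roch computation, and your handling of the \'etale-local existence of superconformal coordinates by faithfully flat descent of the canonical comparison map correctly fills in the one step the paper glosses over. The only slip is cosmetic: $\Gc_\pi^{\nf_R}$ is a sheaf of $\pi^{-1}\Oc_S$-modules but not of $\Oc_\Xcal$-modules (multiplying a $\Dcal$-preserving derivation by a function need not preserve $\Dcal$), so the comparison map is canonical only as a map of sheaves of $\Oc_S$-modules --- which is all your argument actually uses.
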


Proceeding as in \cite[Proposition 2.2]{LeRoth88}, one has:
\begin{corol}\label{cor:h0dimrel}
 Let $(\pi\colon\Xcal \to \Sc,\Zc,\Dcal)$ be a relative RR-SUSY curve, where now $\Sc$ is any superscheme. One has
$$
\pi_\ast\Gc_\pi^{\nf_R}=0\,.
$$
\qed
\end{corol}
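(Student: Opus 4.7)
The strategy is to reduce to the case of an ordinary base, already handled by Corollary \ref{cor:h1dimrel}, through a d\'evissage along the nilpotent filtration of the odd ideal $\Jc\subset\Oc_\Sc$. The question being local on $\Sc$, one may assume $\Sc$ affine with $\Jc^{N+1}=0$ for some integer $N$. Write $i\colon S\hookrightarrow\Sc$ for the natural immersion and $\pi_0\colon\Xcal_S\to S$ for the base change, which inherits an RR-SUSY structure.

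Set $\Gc:=\Gc_\pi^{\nf_R}$. By Proposition \ref{p:localsplitRR}, after an \'etale base change of $\Sc$ there exist, Zariski-locally on $\Xcal$, superconformal coordinates $(z,\theta)$, and the relative analogue of the computation in the proof of Proposition \ref{def:infdef}(1) then identifies
\begin{equation*}
\Gc\simeq(\Theta_{\Xcal/\Sc}/\Dcal)\otimes_{\Oc_\Xcal}\Oc_\Xcal(-\Zc).
\end{equation*}
In particular $\Gc$ is locally free over $\Oc_\Xcal$, and its restriction to $\Xcal_S$ is $\Gc_{\pi_0}^{\nf_R}$. Since an \'etale cover is faithfully flat, vanishing of $\pi_\ast\Gc$ can be tested after \'etale base change.

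Consider now the filtration $\Gc=\Gc_0\supset\Gc_1\supset\dots\supset\Gc_{N+1}=0$ with $\Gc_k:=(\pi^{-1}\Jc^k)\cdot\Gc$. Flatness of $\Gc$ over $\pi^{-1}\Oc_\Sc$ turns the exact sequence $0\to\Jc^{k+1}\to\Jc^k\to\Jc^k/\Jc^{k+1}\to 0$, pulled back by $\pi^{-1}$ and tensored with $\Gc$, into the short exact sequence of $\Oc_\Xcal$-modules
\begin{equation*}
0\to\Gc_{k+1}\to\Gc_k\to\pi_0^\ast(\Jc^k/\Jc^{k+1})\otimes_{\Oc_{\Xcal_S}}\Gc_{\pi_0}^{\nf_R}\to 0,
\end{equation*}
where the quotient, being annihilated by $\pi^{-1}\Jc$, lives on $\Xcal_S$. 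Because $\Gc_{\pi_0}^{\nf_R}$ is locally free over $\Oc_{\Xcal_S}$ and $\pi_0$ is proper, the projection formula together with Corollary \ref{cor:h1dimrel} yields
\begin{equation*}
\pi_\ast\bigl(\pi_0^\ast(\Jc^k/\Jc^{k+1})\otimes_{\Oc_{\Xcal_S}}\Gc_{\pi_0}^{\nf_R}\bigr)\simeq i_\ast\bigl((\Jc^k/\Jc^{k+1})\otimes_{\Oc_S}\pi_{0,\ast}\Gc_{\pi_0}^{\nf_R}\bigr)=0.
\end{equation*}

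A downward induction on $k$ now concludes the argument: $\pi_\ast\Gc_{N+1}=0$ trivially, and at each step the long exact sequence attached to the short exact sequence above, combined with the vanishing of $\pi_\ast\Gc_{k+1}$ and of the push-forward of the quotient, forces $\pi_\ast\Gc_k=0$; taking $k=0$ yields the result. The most delicate point is the second step, where one needs to establish local freeness of $\Gc_\pi^{\nf_R}$ over $\Oc_\Xcal$ and the identification of its restriction to $\Xcal_S$ with $\Gc_{\pi_0}^{\nf_R}$; this relies on the local superconformal description obtained after an \'etale cover, together with faithful flatness to descend the vanishing statement.
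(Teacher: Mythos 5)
Your argument is correct and is essentially the proof the paper has in mind: the corollary is justified there by reference to Proposition 2.2 of LeBrun--Rothstein, whose proof is precisely this d\'evissage along the powers of the odd ideal of $\Oc_\Sc$, reducing via the local superconformal description (hence after an \'etale cover of the base, with faithfully flat descent of the vanishing) to the vanishing over an ordinary base given by Corollary \ref{cor:h1dimrel}. The only imprecision is that $\Gc_\pi^{\nf_R}$ is not an $\Oc_\Xcal$-module (the condition $[D',D]\in\Dcal$ is not $\Oc_\Xcal$-linear in $D'$), so rather than saying it is ``locally free over $\Oc_\Xcal$'' you should say that the isomorphism $\Gc_\pi^{\nf_R}\simeq(\Theta_{\Xcal/\Sc}/\Dcal)(-\Zc)$ of sheaves of $\pi^{-1}\Oc_\Sc$-modules identifies it with the underlying $\pi^{-1}\Oc_\Sc$-module of a locally free $\Oc_\Xcal$-module, which supplies exactly the flatness over $\pi^{-1}\Oc_\Sc$ that your filtration argument needs.
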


\bigskip
\section{Supermoduli of RR-SUSY curves}\label{s:modRR}

In the section we describe both a local and a global construction of the supermoduli space of \emph{RR-SUSY curves}.  From now on we assume that the base field $k$ is the field of the complex numbers; this will be required in Lemmas \ref{lem:LBR1}  and \ref{lem:LBR2}.
To simplify the exposition we shall only consider curves of genus $g\geq 2$ with a level $n$ structure with $n\ge 3$, so that they have no automorphisms but the identity and there exists a fine moduli scheme $M_g$ of ordinary curves, carrying  a universal curve $X_g\to M_g$.\footnote{A level $n$ structure on an ordinary curve $p\colon X \to S$ 
is an isomorphism between the
$n$-torsion of the relative Jacobian of the curve and the group $\Gamma(S,R^1p_\ast \Z_n)$.}
We also fix a positive integer $\nf_R$ which is the degree of the divisor of the relative positive superdivisor along which the \RR\ punctures are supported. The supermoduli of RR-SUSY curves is constructed in the same way as the supermoduli for SUSY curves \cite{DoHeSa97}. The steps of the constructions are the following:

\begin{enumerate}
\item
Construction of the bosonic supermoduli. For that, we mean  the underlying scheme (or the relevant algebraic structure) to the supermoduli, assuming that the latter exists.
\item
Computation of the infinitesimal deformations of RR-SUSY curves.
\item Construction of the ``local supermoduli superscheme'' out of the infinitesimal deformations.
\item Extension to RR-SUSY curves  of the results of Le Brun-Rothstein \cite{LeRoth88}, which they proved in the case of SUSY curves.
\end{enumerate}

From now on, we always assume that the positive relative divisors $\Zc$ are \emph{nonramified over the base}.

\begin{defin}\label{def:modfunctorRR} The functor of RR-SUSY curves along a relative positive superdivisor of degree $\nf_R$ is the \emph{sheaf $\F_{\nf_R}$ for the \'etale topology of superschemes} (\cite{DoHeSa97}, Definition {4.1})
associated to the presheaf 
\begin{equation}
\Sc\to F_{\nf_R}(\Sc)=
\left\{
\begin{matrix}
\text{Isomorphism classes of SUSY curves over $\Sc$ with}\\
\text{a \RR\ puncture along a positive}
\\
\text{superdivisor $\Zc$ of relative degree $\nf_R$}\\
\text{such that $\Zc\to\Sc$ is nonramified}
\end{matrix}
\right\}\,.
\end{equation} 
\end{defin}

\subsection{The bosonic Moduli functor of RR-SUSY curves}

In this subsection we describe the \emph{functor of RR-SUSY curves over ordinary schemes}. Due to Proposition \ref{p:spinrel}, RR-SUSY curves over ordinary schemes are related to RR-spin structures. 

\begin{defin}\label{def:RR} An RR-spin curve over an ordinary scheme $S$ 
is a curve $X\to S$ with a relative positive divisor $Z$ nonramified over $S$, and an
``invertible sheaf class''
$\Upsilon\in\Pic X/S$ such that $\Upsilon^2=[\kappa_{X/S}(Z)]$.
\end{defin}

So a curve with an RR-spin structure in the sense of Proposition  \ref{p:spinrel} is an RR-spin curve, but the converse is only true after an \'etale base change, that is, locally for the \'etale topology.
Moreover, as already noted in Equation \eqref{eq:degRR}, the existence of an RR-spin structure forces the relative degree $\nf_R$ of $Z$  to be even.

If the morphism $\pi\colon X\to S$ has a section, 
$\Pic X\to\Pic X/S$ is an epimorphism  \cite[Section 2]{GrPic62} and the `relative RR-spin structure' has
the form 
$\Upsilon=[\Lcl]$ for some invertible sheaf $\Lcl$ on $X$. However, the sheaf $\Lcl$ may 
still not be an RR-spin structure.
On the other hand, since $\pi_\ast\Oc_X\iso\Oc_S$, if there
is an RR-spin structure
 $\Lcl\in\Upsilon$ along $Z$,  any other RR-spin structure in the same class has the form
$\Lcl\otimes\pi^\ast \Nc$ for an invertible sheaf $\Nc$ on $S$ with
$\Nc^2=\Oc_S$.

\begin{defin}\label{def:RRfunct} The functor of RR-spin curves is the functor on the
category of ordinary schemes given by
\begin{equation}
S\to\F_{\nf_R}^{\,\mbox{\rm\tiny spin}}(S)=
\left\{
\begin{matrix}
\text{Isomorphism classes of RR-spin curves $(X/S,Z,\Upsilon)$}\\
\text{of genus $g$ along a positive divisor $Z$}\\
\text{of relative degree $\nf_R$ and nonramified over $S$}
\end{matrix}
\right\}\,.
\end{equation} 
\end{defin}

Proceeding as in \cite[Theorem 4.2]{DoHeSa97} we obtain:
\begin{prop}\label{p:RRspin}
There is
an isomorphism of sheaves for the \'etale topology
\begin{equation}\label{e:isofunctors}
\F_{\nf_R\vert \mathrm{\{Schemes\}}}\iso\F_{\nf_R}^{\,\mbox{\rm\tiny spin}};
\end{equation}
that is, the restriction of the functor of RR-SUSY curves to the category of ordinary
schemes is the functor of RR-spin curves.
\qed
\end{prop}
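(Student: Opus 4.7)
The plan is to construct a natural morphism of presheaves from $F_{\nf_R}\vert_{\{\mathrm{Schemes}\}}$ to $\F_{\nf_R}^{\,\mbox{\rm\tiny spin}}$, to observe that the target is already an \'etale sheaf (since its value takes place in the sheaf $\Pic X/S$), so that this map factors through the sheafification $\F_{\nf_R}\vert_{\{\mathrm{Schemes}\}}$, and then to verify bijectivity locally for the \'etale topology on $S$. On the presheaf level the map is the obvious one: given $(\Xcal/S,\Zc,\Dcal)\in F_{\nf_R}(S)$ with $S$ ordinary, Proposition \ref{p:split} supplies an isomorphism $\Oc_{\Xcal}\simeq\Oc_X\oplus\Lcl^\Pi$ and Proposition \ref{p:spinrel} converts the RR-conformal structure into an isomorphism $\varpi\colon\Lcl\otimes\Lcl\iso\kappa_{X/S}(Z)$. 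I would then assign to the RR-SUSY curve the RR-spin curve $(X/S,Z,\Upsilon=[\Lcl])$, and note that an isomorphism of RR-SUSY structures induces an isomorphism of the associated triples, so the assignment descends to isomorphism classes.

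For \'etale-local surjectivity of the induced map of sheaves, I would start with $(X/S,Z,\Upsilon)\in\F_{\nf_R}^{\,\mbox{\rm\tiny spin}}(S)$ and pick an \'etale cover $\phi\colon T\to S$ on which $\phi^\ast\Upsilon$ is represented by an actual line bundle $\Lcl$ on $X_T$; this is possible because $\Pic X/S$ is by definition the \'etale sheafification of $S'\mapsto \Pic(X_{S'})/\Pic(S')$. The equality $\Upsilon^2=[\kappa_{X/S}(Z)]$ now translates into $\Lcl^{\otimes 2}\simeq\kappa_{X_T/T}(Z_T)\otimes\pi_T^\ast\Nc$ for some line bundle $\Nc$ on $T$. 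Passing to a further \'etale base change that trivializes the $\mu_2$-torsor of square roots of $\Nc$, one can absorb the twist and assume outright $\Lcl^{\otimes 2}\simeq\kappa_{X_T/T}(Z_T)$. Proposition \ref{p:spinrel} then upgrades this datum to an RR-SUSY curve on $X_T$ whose image in $\F_{\nf_R}^{\,\mbox{\rm\tiny spin}}(T)$ is the pullback of $\Upsilon$.

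For \'etale-local injectivity, suppose two RR-SUSY curves over $S$, encoded by line bundles $\Lcl_1$, $\Lcl_2$ on $X$ together with square-root isomorphisms $\varpi_1,\varpi_2$, induce the same class $\Upsilon\in\Pic X/S$. After an \'etale base change one obtains $\Lcl_1\simeq\Lcl_2\otimes\pi^\ast\Nc$ with $\Nc$ a line bundle on $S$, and the existence of $\varpi_1,\varpi_2$ forces $\Nc^{\otimes 2}\simeq\Oc_S$. A further \'etale cover trivializes $\Nc$ and yields an isomorphism $\Lcl_1\simeq\Lcl_2$ that matches $\varpi_1$ and $\varpi_2$ up to a global sign, and the residual sign is absorbed by the $\pm\Id$ automorphism of $\Lcl$, which is a genuine automorphism of the RR-SUSY curve. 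Hence the two classes coincide in $\F_{\nf_R}(T)$ after the composite \'etale base change, so also in $\F_{\nf_R}$.

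The principal obstacle, and the structural reason why sheafification is needed on the RR-SUSY side as well, is precisely the fact that a class in $\Pic X/S$ need not be represented by any line bundle on $X$ globally; the entire argument carefully traffics between actual line bundles, which feed into the equivalence of Proposition \ref{p:spinrel}, and their images in the relative Picard sheaf. The two technical inputs that make this work are \'etale-local representability of sections of $\Pic X/S$ by line bundles and \'etale-local trivialization of $\mu_2$-torsors; both are standard for smooth families of curves of genus $g\geq 2$ over a Noetherian base, and once granted the verification collapses to bookkeeping on the correspondence of Proposition \ref{p:spinrel}.
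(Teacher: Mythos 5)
Your proposal is correct and follows essentially the same route the paper takes: the paper's proof of Proposition \ref{p:RRspin} is simply the instruction to proceed as in Theorem 4.2 of \cite{DoHeSa97}, i.e., to use the splitting of supercurves over ordinary bases (Proposition \ref{p:split}) and the equivalence of Proposition \ref{p:spinrel} to define the presheaf map, and then to check that it induces an isomorphism of the associated \'etale sheaves by exactly the kind of local representability of classes in $\Pic X/S$ and local extraction of square roots (of line bundles and of units, with the residual ambiguity absorbed by $\pm\Id$) that you spell out. Your write-up supplies the details the paper leaves to the citation, and I see no gap in it.
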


Consider now the morphism of functors over ordinary schemes $S$ given by
\begin{equation}\label{eq:functmorph}
\begin{aligned}
F_{\nf_R}(S)& \to \Div_g^{[\nf_R]}(S)=
\left\{
\begin{matrix}\hbox{Automorphism classes of curves}\cr 
               \hbox{$X\to S$ of genus $g$}\cr
               \hbox{with a positive divisor}\cr
               \hbox{$Z\hookrightarrow X$ of relative degree $\nf_R$}\cr
               \hbox{and nonramified over $S$}\end{matrix}
\right\} \\ \\
(\Xcal/S,\Zc,\Dcal) & \mapsto  (X/S, Z)\,.
\end{aligned}
\end{equation}

By Proposition \ref{p:RRspin}, 
the ``fibre functor'' is the functor of RR-spin structures on $X/S$ along $Z$.
Take the moduli scheme
$M_g$ of curves of genus $g$ and the universal curve $X_g\to M_g$. Let us consider the open subscheme $M$ of the relative symmetric power $S^{\nf_R}(X_g/M_g)$ which is the complementary of all the diagonals. Then $M$ is a fine moduli scheme for relative positive divisors of relative degree $\nf_R$ that are nonramified over the base (that is, whose fibres are positive divisors consisting of $\nf_R$ different points). This is a quasi-projective scheme of dimension $3g-3+\nf_R$. Let us denote by $X\to M$ the pullback of the universal curve, and by $Z\hookrightarrow X$ the universal positive divisor of $X\to M$ of relative degree $\nf_R$. Note that $Z\to M$ is nonramified.

Let $J^d=J^d(X/M)\to M$ be the
relative Jacobian of invertible sheaves of degree $d$ on $X_M\to M$ and
 $\Upsilon_d\in\Pic^d (X_M\times_M J^d/J^d)$ the universal ``invertible sheaf
class.''  The twisted canonical sheaf
$\kappa_{X_M/M}(Z)$ defines a point of $J^{2g-2+\nf_R}$ with values in
$M$, that is, a morphism $\iota\colon M\to J^{2g-2+\nf_R}$ of $M$-schemes. 
If $m=\nf_R/2$, $\mu_2\colon
J^{g-1+m}\to J^{2g-2+\nf_R}$ is the morphism ``raising to power two,'' and
$M_{\nf_R}=\mu_2^{-1}[\kappa_{X_M/M}(Z)]\subset J^{g-1+m}$ is the pullback of that
point, that is,  $M_{\nf_R}$ is defined by the cartesian diagram
$$
\xymatrix{
J^{g-1+m}\ar[r]^{\mu_2} & J^{2g-2+\nf_R} \\
M_{\nf_R}\ar@{^(->}[u]\ar[r]^{\rho}& M\ar@{^(->}[u]_\iota\,.
}
$$
The natural projection $M_{\nf_R}\to M$ is an \'etale covering of degree
$2^{2g}$, so that $M_{\nf_R}$ is a quasi-projective scheme of dimension $3g-3+\nf_R$.

Now, the pullback
$X_{\nf_R}=X_M\times_{M}M_{\nf_R}\hookrightarrow X_M\times_M J^{g-1+p}$ of the universal
curve, together with the pullback $Z_{\nf_R}$ of the divisor $Z$ and  the ``invertible sheaf class'' $\Upsilon_{\nf_R}$ given by the restriction of $\Upsilon_{g-1+p}$ to $X_{\nf_R}$, define a ``universal RR-spin curve class'' in  $\F_{\nf_R}^{\,\mbox{\rm\tiny spin}}(S)(M_{\nf_R})$,  as one has 
\begin{equation}\label{eq:univsquare}
\Upsilon_{\nf_R}^2=[\kappa_{X_{\nf_R}/M_{\nf_R}}(Z_{\nf_R})]. 
\end{equation}

\begin{thm}[Bosonic moduli]\label{thm:RRfunct} The functor 
of spin curves $\F_{\nf_R}^{\,\mbox{\rm\tiny spin}}(S)$ is representable by the quasi-projective scheme
$M_{\nf_R}$ of dimension $3g-3+\nf_R$, which we call the \emph{bosonic supermoduli scheme}, and the 'universal RR-spin curve class' 
$$
(X_{\nf_R}, Z_{\nf_R}, \Upsilon_{\nf_R})\,.
$$
Due to the functor isomorphism \eqref{e:isofunctors}, the restriction $\F_{\nf_R\vert \mathrm{\{Schemes\}}}$ of the functor of RR-SUSY curves to the category of ordinary schemes is representable by $M_{\nf_R}$ together with a ``universal RR-SUSY curve class''
$$
(\Xcal_{\nf_R}\to M_{\nf_R}, Z_{\nf_R}, {\Df}_{\nf_R})\,.
$$
\qed
\end{thm}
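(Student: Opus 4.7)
The plan is to check that for every ordinary scheme $S$ the data of an RR-spin curve $(X/S,Z,\Upsilon)$ is naturally equivalent to the data of a morphism $S\to M_{\nf_R}$, and then to deduce the RR-SUSY statement from the sheaf isomorphism \eqref{e:isofunctors}. The key observation is that $M_{\nf_R}$ has been constructed precisely so as to package together, via universal properties, the three ingredients making up an RR-spin datum: the curve, the nonramified relative divisor, and the square root of $\kappa_{X/S}(Z)$ in the relative Picard group.

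First I would recall that $M$ is a fine moduli scheme for pairs $(X/S,Z)$ where $Z$ is a nonramified positive relative divisor of degree $\nf_R$: indeed, $M_g$ represents curves of genus $g$ with level $n$ structure, and the open locus $M\subset S^{\nf_R}(X_g/M_g)$ complementary to the diagonals represents exactly such unordered $\nf_R$-tuples of distinct points in the fibres. Hence the underlying pair of an RR-spin curve determines a unique morphism $\phi\colon S\to M$ with $(X,Z)\simeq(X_M,Z_M)\times_M S$. Next, by representability of the relative Picard functor $\Pic^{g-1+m}(X_M/M)$ by the relative Jacobian $J^{g-1+m}$, the class $\Upsilon$ on $X_S\to S$ corresponds to a unique morphism $\psi\colon S\to J^{g-1+m}$ over $\phi$ such that $\psi^\ast\Upsilon_{g-1+m}=\Upsilon$. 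The spin condition $\Upsilon^2=[\kappa_{X/S}(Z)]$ then translates into the equality $\mu_2\circ\psi=\iota\circ\phi$ of morphisms $S\to J^{2g-2+\nf_R}$, which by the universal property of the cartesian square defining $M_{\nf_R}$ is the same datum as a morphism $S\to M_{\nf_R}$. Conversely, pulling back $(X_{\nf_R},Z_{\nf_R},\Upsilon_{\nf_R})$ along any such morphism yields, thanks to \eqref{eq:univsquare}, an RR-spin curve over $S$; the two assignments are functorial in $S$ and are mutually inverse, giving the required representability.

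For the geometric assertions I would observe that $M$ is quasi-projective of dimension $3g-3+\nf_R$, being an open subscheme of the relative symmetric power of a quasi-projective family of curves. The squaring map $\mu_2\colon J^{g-1+m}\to J^{2g-2+\nf_R}$ is an \'etale isogeny of abelian schemes of degree $2^{2g}$ over $\mathbb{C}$, so its pullback $M_{\nf_R}\to M$ along $\iota$ is an \'etale cover of the same degree, preserving both quasi-projectivity and dimension. The RR-SUSY part of the statement then follows at once from Proposition \ref{p:RRspin}. I expect the only subtle point to be a careful treatment of the fact that $\Upsilon_{\nf_R}$ need not be represented by a genuine line bundle on $X_{\nf_R}$, but only \'etale-locally; this is precisely why one obtains a ``universal RR-SUSY curve class'' rather than a universal RR-SUSY curve, and it is also the reason why one must already sheafify in Definition \ref{def:modfunctorRR} to obtain a representable object.
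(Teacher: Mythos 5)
Your proposal is correct and follows essentially the same route as the paper, which proves the theorem implicitly through the construction preceding it: $M$ as a fine moduli of pairs $(X/S,Z)$ via the symmetric power, the relative Jacobian representing classes in $\Pic X/S$, the cartesian square defining $M_{\nf_R}$ as the fibre of $\mu_2$ over $\iota$, and Proposition \ref{p:RRspin} for the RR-SUSY half. Your unpacking of the universal properties, the dimension count, and the remark on the distinction between $\Upsilon_{\nf_R}$ as a class versus a genuine line bundle all match the paper's argument.
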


The ``universal relative RR-spin curve class''  $\Xcal_{\nf_R}\to M_{\nf_R}$ is not a true relative RR-spin curve with underlying scheme $X_{\nf_R}$, because $\Upsilon_{\nf_R}$ is not the class of a line bundle, however there exists an affine \'etale covering $p\colon U\to M_{\nf_R}$ such that $p^\ast\Upsilon_{\nf_R}$ is the class of a line bundle $\Lcl$ on the pull-back $X_U\to U$ of $X_{\nf_R}$ (for that it is enough that $X_U\to U$ has a section). By Equation \eqref{eq:univsquare}, $\Lcl\otimes\Lcl$ is isomorphic to $\kappa_{X_U/U}(Z_U)$, where $Z_U$ is the pull-back of the universal divisor $Z_{\nf_R}$, up to the pull-back of a line bundle on $U$. We can then refine $U$ if necessary to have an isomorphism
$$
\varpi\colon\Lcl\otimes\Lcl\iso \kappa_{X_U/U}(Z_U)\,.
$$

Analogously, the ``universal RR-SUSY curve class''
$(\Xcal_{\nf_R}\to M_{\nf_R}, Z_{\nf_R}, \Df_{\nf_R})$ is not a true relative RR-SUSY curve with underlying scheme $X_{\nf_R}$ because $\Df_{\nf R}$ is not an RR-superconformal structure. Actually, the ``universal RR-SUSY curve class'' is the section on $M_{\nf_R}$ of the sheaf $\F_{\nf_R}$ determined on the above affine \'etale covering  by the RR-SUSY curve $(X_U,Z_U,\Df_U)$, where $\Df_U$ is the RR-superconformal structure induced by $\varpi$ according to Proposition \ref{p:spinrel}.

\begin{defin}\label{def:trivializingRR}
An \'etale covering $p\colon U\to M_{\nf_R}$ is called \emph{trivializing} if it has the above properties, namely, $p^\ast\Upsilon_{\nf_R}=[\Lcl]$ for a line bundle $\Lcl$ on $X_U$ and there is an isomorphism $\varpi\colon\Lcl\otimes\Lcl\iso \kappa_{X_U/U}(Z_U)$.
\end{defin}

Take an affine \'etale covering $q\colon R\to U \times_{M_{\nf_R}}U$ and denote by $(q_1,q_2)\colon R \rra U$ the composition of $q$ with the two projections $(p_1,p_2)\colon U\times_{M_{\nf_R}}U \rra U$. Then $q_1^\ast X_U$ and $q_2^\ast X_U$ are isomorphic as curves over $R$; let us write $X_R:= q_1^\ast X_U\iso q_2^\ast X_U$. Also $q_1^\ast Z_U$ and $q_2^\ast Z_U$ are isomorphic as relative divisors over $R$ and we write $Z_R:= q_1^\ast Z_U\iso q_2^\ast Z_U$.
One then has a diagram
$$
\xymatrix{
X_R\ar@<3pt>[r]^{(q_1,q_2)}\ar@<-3pt>[r] \ar[d]_\pi& X_U\ar[r]\ar[d]_\pi & X_{\nf_R}\ar[d]_\pi
\\
R\ar@<3pt>[r]^{(q_1,q_2)}\ar@<-3pt>[r] & U \ar[r] &M_{\nf_R}
}
$$
where the double arrows are \'etale equivalence relations and the rightmost column contains the quotients in the category of schemes. We also have 
$$
q_1^\ast\varpi\colon q_1^\ast\Lcl\otimes q_1^\ast\Lcl \iso \kappa_{X_R/R}(Z_R)\,,\quad q_2^\ast\varpi\colon q_2^\ast\Lcl\otimes q_2^\ast\Lcl \iso \kappa_{X_R/R}(Z_R)\,.
$$
The line bundles $q_1^\ast\Lcl$ and $q_2^\ast\Lcl$ are in the same class in the relative Picard group, so that $q_1^\ast\Lcl\iso q_2^\ast\Lcl\otimes \pi^\ast\Nc$ for a line bundle $\Nc$ on the base.  By refining $q$ if necessary, we can assume that $\Nc$ is trivial; then $q_1^\ast\Lcl\iso q_2^\ast\Lcl$. Let us denote by $\bar\Lcl$ one of these line bundles. Proceeding as in the proof of  \cite[Theorem 4.2]{DoHeSa97}, we can refine $q$ again so that there is an automorphism $t$ of $\bar\Lcl$ such that 
$q_2^\ast\varpi=q_1^\ast\varpi\circ(t\otimes t)$. 

Thus, considering the relative RR-SUSY curves
$\Xcal_R=(X_R, \Oc_{X_R}\oplus \bar\Lcl)$ and $ \Xcal_U=(X_U, \Oc_{X_U}\oplus \Lcl)$, 
the above diagram can be completed to a diagram of RR-SUSY curve classes
\begin{equation}\label{eq:RRusy}
\xymatrix{
(\Xcal_R,Z_R,\bar\Lcl)\ar@<3pt>[r]^{(q_1,q_2)}\ar@<-3pt>[r] \ar[d]_{\pi_R}& 
(\Xcal_U,Z_U,\Lcl)\ar[r]\ar[d]_{\pi_U} & (\Xcal_{\nf_R},Z_{\nf_R},\Upsilon_{\nf_R})\ar[d]_\pi
\\
R\ar@<3pt>[r]^{(q_1,q_2)}\ar@<-3pt>[r] & U \ar[r] &M_{\nf_R}
}
\end{equation}
where we are confusing superschemes with their functors of   points. 
Moreover, since $X_{\nf_R}$ is separated, the morphism $X_R\xrightarrow{(q_1,q_2)} X_U\times X_U$ is a closed immersion. It follows that $\Xcal_R\xrightarrow{(q_1,q_2)} \Xcal_U\times \Xcal_U$ is a closed immersion as well.
This proves the following:
\begin{prop}\label{p:univcurvebos}
The ``universal relative RR-SUSY curve class'' is the quotient of the \'etale equivalence of superschemes
$$
\Xcal_R\rra \Xcal_U\to \Xcal_{\nf_R}\,.
$$
That is, $\Xcal_{\nf_R}$ is an Artin algebraic superspace (see \cite{DoHeSa97} Definition 6.3) whose underlying Artin algebraic space is the relative curve $X_{\nf_R}\to M_{\nf_R}$ over the bosonic moduli scheme. Moreover $\Xcal_{\nf_R}$ is separated \cite[Chap.~2, 1.8]{Knut71}.
\qed
\end{prop}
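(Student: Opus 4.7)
My plan is to show that the top row of diagram \eqref{eq:RRusy} is an \'etale equivalence relation of superschemes in the sense of Definition \ref{def:etalerel}, whose categorical quotient recovers $\Xcal_{\nf_R}$, and then to read off the underlying algebraic space and separatedness from this description.

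First I would verify that $q_1,q_2\colon\Xcal_R\rra\Xcal_U$ is an \'etale equivalence relation. By construction $\Xcal_R\simeq \Xcal_U\times_U R$ through either $q_i$ (after the identifications $q_i^\ast\Lcl\iso\bar\Lcl$ arranged by refining $R$), so the \'etaleness of the $q_i$ at the superscheme level follows from Proposition \ref{p:etalelocal} applied to the \'etale maps $R\rra U$. The closed-immersion property $(q_1,q_2)\colon\Xcal_R\hookrightarrow \Xcal_U\times\Xcal_U$ is precisely what was just verified in the excerpt. The equivalence axioms (diagonal, symmetry, transitivity) lift from the bosonic relation $R\rra U$ to the super level provided the compatibility automorphism $t$ of $\bar\Lcl$ satisfies the cocycle condition on the triple fibre product $R\times_U R\times_U R$; this is secured by a further refinement of $R$, exactly as in the analogous SUSY-curves construction \cite[Theorem 4.2]{DoHeSa97}.

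Next I would identify the quotient with $\Xcal_{\nf_R}$. Diagram \eqref{eq:RRusy} exhibits $(\Xcal_U,Z_U,\Lcl)$ as a section of the \'etale sheaf $\F_{\nf_R}$ over $U$ whose pullbacks under $q_1$ and $q_2$ agree, i.e.\ as a descent datum for $\F_{\nf_R}$; by the sheaf property and Theorem \ref{thm:RRfunct}, this datum descends to $\Xcal_{\nf_R}$. Consequently $\Xcal_{\nf_R}$ is the categorical quotient of $\Xcal_R\rra\Xcal_U$ in the category of sheaves for the \'etale topology of superschemes, and by Definition \ref{def:etalerel} it is an Artin algebraic superspace. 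Applying the forgetful functor $\Xcal\mapsto X$ to the diagram recovers the \'etale equivalence relation $X_R\rra X_U$ whose algebraic-space quotient is the universal curve $X_{\nf_R}\to M_{\nf_R}$, identifying the underlying algebraic space of $\Xcal_{\nf_R}$. Separatedness then follows from the criterion used in \cite[Chap.~2, 1.8]{Knut71}, since $(q_1,q_2)$ has been shown to be a closed immersion.

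The hard part will be checking the cocycle condition at the super level: the entire construction of the quotient $\Xcal_{\nf_R}$ depends on having the spin-structure identifications $t$ compatible across triple overlaps in $R$, and guaranteeing this forces the successive refinements of the \'etale covers $U$ and $R$ that must be put in place before the equivalence relation can even be written down. Everything else, including \'etaleness, descent and separatedness, is formal once this bookkeeping is under control.
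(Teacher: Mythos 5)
Your proposal is correct and follows essentially the same route as the paper: the paper's argument is precisely the construction preceding the statement, namely building the diagram \eqref{eq:RRusy} by refining $R$ until $q_1^\ast\Lcl\iso q_2^\ast\Lcl$ and the spin isomorphisms agree up to the automorphism $t$ (citing \cite[Theorem 4.2]{DoHeSa97} for this bookkeeping, as you do), identifying the quotient with the universal class via the sheaf $\F_{\nf_R}$, and deducing the closed-immersion/separatedness claim from the separatedness of $X_{\nf_R}$. Your explicit flagging of the cocycle condition on $t$ over triple overlaps is a reasonable elaboration of what the paper delegates to that reference, but it does not change the argument.
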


Since $U$ and $R$ are local moduli spaces for RR-SUSY curves on the category of ordinary schemes, one has:
 
\begin{prop}\label{p:ksisoRR} The even components  of the Kodaira-Spencer maps (Definition \ref{ksmap3}) of the RR-SUSY curves $(\Xcal_U,Z_U,\Lcl)\to U$ and $(\Xcal_R,Z_R,\bar\Lcl)\to R$ are isomorphisms.
\qed\end{prop}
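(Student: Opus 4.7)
The plan is to reduce the assertion to the classical fact that the Kodaira--Spencer map for a fine moduli space of pointed curves is an isomorphism, exploiting the étaleness of the tower $R \to U \to M_{\nf_R} \to M$, where $M$ is the open subscheme of $S^{\nf_R}(X_g/M_g)$ representing pairs $(X,Z)$ consisting of a genus $g$ curve (with level $n$ structure) and a nonramified positive divisor of degree $\nf_R$.

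First I would record the identification of targets. By Corollary \ref{cor:h1dimrel}, on any ordinary base the even part of $R^1\pi_\ast\Gc_\pi^{\nf_R}$ is $R^1\pi_\ast(\kappa_{X/S}(Z)^{-1})=R^1\pi_\ast(\Theta_{X/S}(-Z))$, which is precisely the sheaf that classically controls the infinitesimal deformations of the pair $(X,Z)$ when $Z$ is étale over the base. The source side is $\Theta_U$, whose rank equals $3g-3+\nf_R=\dim M$, because $U\to M_{\nf_R}\xrightarrow{\rho}M$ is a composition of étale morphisms (the map $\rho$ is the base change of the étale isogeny $\mu_2\colon J^{g-1+m}\to J^{2g-2+\nf_R}$, and $U\to M_{\nf_R}$ is étale by Definition \ref{def:trivializingRR}). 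Thus the two sheaves to be compared have the same rank.

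Next, I would argue that the even Kodaira--Spencer map of $(\Xcal_U,Z_U,\Lcl)\to U$ \emph{factors} as the classical Kodaira--Spencer map for the family of pairs $(X_U,Z_U)\to U$ obtained from the composition $U\to M$. Indeed, on an ordinary base the sheaf $\Gc_\pi^{\nf_R}$ splits, its even part depends only on the pair $(X,Z)$, and the Kodaira--Spencer map (Definition \ref{ksmap3}), being the connecting map of the even part of
$$0\to \Gc_\pi^{\nf_R}\to \Theta_{\Xcal}^{\operatorname{even}}\to \pi^\ast\Theta_U\to 0,$$
is the same as the connecting map in the corresponding sequence for $(X,Z)$ (since the spin datum is étale over the pair). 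It commutes with étale base change by flat base change, so the Kodaira--Spencer map of $(\Xcal_U,Z_U,\Lcl)\to U$ is the pullback to $U$ of the Kodaira--Spencer map of the universal family over $M$. The latter is an isomorphism because $M$ is a fine moduli space. A fortiori so is the former, and the same argument with $R$ in place of $U$ closes the statement.

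The only delicate point I expect is the identification of the even Kodaira--Spencer map for the RR-SUSY datum with the one for the underlying pointed curve: one needs to check that passing from $\Theta_{\Xcal/U}$ to its subsheaf $\Gc_{\pi_U}^{\nf_R}$ (Definition \ref{def:infdefRR}) and then to its even part is compatible with the forgetful map of moduli functors $\F_{\nf_R\vert\mathrm{\{Schemes\}}}\to \Div_g^{[\nf_R]}$ of \eqref{eq:functmorph}, so that the pullback of the universal Kodaira--Spencer map on $M$ genuinely coincides with the map constructed from $(\Xcal_U,Z_U,\Lcl)$. This is a naturality verification that I would carry out in local superconformal coordinates, using the explicit generators provided by Proposition \ref{p:localsplitRR} and the computation already performed in Proposition \ref{def:infdef}.
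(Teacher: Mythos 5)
Your argument is correct and is essentially the paper's own (the paper justifies the proposition in one line, by observing that $U$ and $R$ are local moduli, i.e.\ \'etale over $M_{\nf_R}$, which represents the functor of RR-spin curves on ordinary schemes; you merely descend one further \'etale step to $M$ and invoke the classical Kodaira--Spencer isomorphism for the fine moduli of pairs $(X,Z)$, which is equivalent since $M_{\nf_R}\to M$ is \'etale). The only cosmetic blemish is the displayed sequence with the undefined symbol $\Theta_{\Xcal}^{\operatorname{even}}$: the relative Kodaira--Spencer map here is the connecting map of the analogue of the sequence in Definition \ref{ksmap3} with $\Gc_\pi^{\nf_R}$ in place of $\Theta_{\Xcal/\Sc}$, after which one takes even parts; this does not affect the substance of your reduction.
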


\begin{remark}\label{rem:trivcov} For every trivializing covering $p\colon
U\to M_{\nf_R}$ (Definition \ref{def:trivializingRR}), the fibre product $R=U\times_{M_{\nf_R}}U$ defines an \'etale
equivalence relation
$(p_1,p_2)\colon U\times_{M_{\nf_R}}U\rra U$, whose quotient (in the category
of locally ringed spaces, \cite{Knut71}) is
$M_{\nf_R}$. Then, the functor of RR-spin curves
$\F_{\nf_R}\simeq M_{\nf_R}$ is the quotient of equivalence relation
$$
R\simeq{U}\times_{M_{\nf_R}}
{U}\rra {U}
$$ in the category of sheaves of sets.
\end{remark}

\subsection{RR-SUSY curves over split superschemes and extension problems}

Let $\Sc=(S, \bigwedge \Ec)$ be an \emph{affine} split superscheme and $(\pi\colon\Xcal_\ast\to S, Z, \Lcl)$ an RR-SUSY curve over the underlying ordinary scheme $S$. Then $\Oc_{\Xcal_\ast}=\Oc_X\oplus\Lcl$ and there is an isomorphism $\Lcl\otimes\Lcl\iso\kappa_{X/S}(Z)$.

We want to describe the supercurves $\bar\pi\colon\Xcal\to\Sc$ with an RR-SUSY structure $\Dcal$ along a superdivisor $\Zc$ extending $(\Xcal_\ast, Z, \Lcl)$. We consider two such supercurves $\Xcal'\to\Sc$ and $\Xcal\to \Sc$ as equivalent if there exists an isomorphism $\Xcal'\iso\Xcal$ of relative RR-SUSY curves over $\Sc$ which restrict to the identity on $(\Xcal_\ast, Z, \Lcl)$. 

In this case we consider the sheaf of abelian groups over $X$ given by
$$
G=\mathrm{exp}(\Nc\otimes\Gc_\pi^{\nf_R})_0\,,
$$
where $\Gc_\pi^{\nf_R}$ is the sheaf 
given by Definition \ref{def:infdefRR}, $\Nc$ is the ideal of $\bigwedge \Ec$ generated by $\Ec$ and the subscript $0$ denotes the even part, and $\Nc\otimes\Gc_\pi^{\nf_R}:=\pi^{-1}\Nc\otimes_{\pi^{-1}\Oc_S}\Gc_\pi^{\nf_R}$.

Since any RR-SUY curve is locally split on the source by Proposition \ref{p:localsplitRR} and $\pi_\ast(\Gc_\pi^{\nf_R})=0$ by Corollary \ref{cor:h1dimrel}, similar statements to Lemmas 2.3 and 2.4 of \cite{LeRoth88} hold true with analogous proofs:

\begin{lemma}\label{lem:LBR1} There is a one-to-one correspondence between the set of equivalence classes of RR-SUSY curves $(\Xcal\to\Sc,\Zc,\Dcal)$ extending  $(\Xcal_\ast\to S, Z, \Lcl)$ and the cohomology set  $H^1(X,G)$.
\qed
\end{lemma}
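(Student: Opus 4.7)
My plan is to model this on the scheme of Lemmas 2.3–2.4 of \cite{LeRoth88}, replacing the tangent sheaf $\Theta$ by $\Gc_\pi^{\nf_R}$ so as to enforce preservation of the RR-conformal structure and of the superdivisor. Write $\Nc\subset\bigwedge\Ec$ for the nilpotent ideal generated by $\Ec$, so that $\Nc$ is finitely generated and $\Nc^{N+1}=0$ for some $N$. Since the exterior algebra of $\Ec$ is a nilpotent $\Oc_S$-algebra, the formal exponential
\[
\exp(x)=\sum_{k\ge 0}\frac{x^k}{k!}
\]
is a well-defined bijection from the even sections of $\Nc\otimes\Gc_\pi^{\nf_R}$ onto the sheaf $G$, with inverse $\log$. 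This interprets $G$ as a sheaf of groups under the Baker--Campbell--Hausdorff product (again a finite sum).

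The first step is local. By Proposition \ref{p:localsplitRR} (and Remark \ref{rem:localsplitRR}) the RR-SUSY curve $(\Xcal_\ast/S,Z,\Lcl)$ can, after an \'etale cover of the base (which is harmless as $\Sc$ is already split and affine, and the \'etale cover can be absorbed into a refinement of the chosen Zariski cover of $X$), be covered by open subsets $U_i\subset X$ on which superconformal coordinates $(z,\theta)$ exist. On each $U_i$, the trivial extension
\[
\Xcal_i=\bigl(U_i,\ \Oc_{\Xcal_\ast}|_{U_i}\otimes_{\Oc_S}\textstyle\bigwedge\Ec\bigr)\to\Sc,
\]
equipped with the pulled-back positive superdivisor $\bar\rho^\ast Z|_{U_i}$ and the pulled-back distribution $\langle\partial_\theta+z\theta\partial_z\rangle$, is an RR-SUSY curve over $\Sc$ extending $(\Xcal_\ast,Z,\Lcl)|_{U_i}$. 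Moreover, any other extension over $U_i$ is equivalent to $\Xcal_i$: this follows by the same deformation-theoretic argument as Proposition \ref{p:defsmooth}, smoothness here referring to both the morphism $\pi$ and the superdivisor $\Zc$ (whose underlying divisor $Z$ is \'etale over $S$ by the nonramification assumption, so its first-order deformations over $\Sc$ are unobstructed and trivial on affine opens).

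The second step is the globalisation. A global extension $(\Xcal\to\Sc,\Zc,\Dcal)$ is obtained by glueing the $\Xcal_i$ along isomorphisms on the overlaps $U_{ij}=U_i\cap U_j$. These isomorphisms are $\Sc$-automorphisms $\varphi_{ij}$ of the trivial extension that (i) reduce to the identity on $(\Xcal_\ast,Z,\Lcl)|_{U_{ij}}$ and (ii) preserve both $\Zc$ and $\Dcal$. An automorphism of $\Oc_{\Xcal_\ast}(U_{ij})\otimes\bigwedge\Ec$ of graded $\bigwedge\Ec$-algebras that reduces to the identity modulo $\Nc$ has the form $\exp(D_{ij})$ for a uniquely determined even graded $\Oc_S$-derivation $D_{ij}$ of $\Oc_{\Xcal_\ast}|_{U_{ij}}\otimes\bigwedge\Ec$ with values in $\Nc\otimes\Oc_{\Xcal_\ast}|_{U_{ij}}$, that is, an even section of $\Nc\otimes\Theta_{\Xcal_\ast/S}|_{U_{ij}}$. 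The condition that $\exp(D_{ij})$ preserve $\Dcal$ translates, by the formula $\exp(D)\cdot \delta=\exp(\mathrm{ad}_D)\delta$ for $\delta\in\Dcal$, into the condition $[D_{ij},\Dcal]\subset\Dcal$, i.e.\ $D_{ij}\in\Gc_\pi^{\nf_R}$ (Definition \ref{def:infdefRR}); and preservation of $\Zc$ is automatic once $D_{ij}$ is chosen in $\Nc\otimes\Gc_\pi^{\nf_R}$ since the superdivisor is pulled back from $X$. Thus $g_{ij}:=\exp(D_{ij})$ is a section of $G$ on $U_{ij}$.

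The third step is the cocycle/coboundary bookkeeping. The cocycle condition $\varphi_{ik}=\varphi_{jk}\circ\varphi_{ij}$ on triple overlaps is precisely the \v{C}ech 1-cocycle condition $g_{ik}=g_{jk}\cdot g_{ij}$ in $G$, which yields a cohomology class in $H^1(X,G)$. Two collections of glueing data $\{\varphi_{ij}\}$ and $\{\varphi'_{ij}\}$ define equivalent extensions if and only if there exist local $\Sc$-automorphisms $\psi_i$ of $\Xcal_i$ preserving $\Zc$, $\Dcal$ and the identification with $(\Xcal_\ast,Z,\Lcl)|_{U_i}$, such that $\varphi'_{ij}=\psi_j\circ\varphi_{ij}\circ\psi_i^{-1}$; by the same exponential argument these $\psi_i$ are of the form $\exp(h_i)$ with $h_i$ an even section of $\Nc\otimes\Gc_\pi^{\nf_R}$ on $U_i$, i.e.\ a section of $G$, and the relation above is the \v{C}ech coboundary relation. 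Hence equivalence classes of extensions are in bijection with $H^1(X,G)$, provided we verify injectivity, i.e.\ that the only global automorphism of a given extension which induces the identity on $(\Xcal_\ast,Z,\Lcl)$ and restricts to the identity under the chosen trivialisations is the identity; this is the point where $\pi_\ast\Gc_\pi^{\nf_R}=0$ (Corollary \ref{cor:h0dimrel}) enters, killing the $H^0$ obstruction. The main obstacle in carrying this out is the careful bookkeeping of the exponential identification and of the compatibility with the superdivisor $\Zc$: everything else is parallel to the Le~Brun--Rothstein argument once $\Gc_\pi^{\nf_R}$ has replaced the ordinary tangent sheaf.
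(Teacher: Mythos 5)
Your proposal is correct and takes essentially the same route as the paper, which simply asserts that Lemmas 2.3--2.4 of LeBrun--Rothstein carry over with $\Gc_\pi^{\nf_R}$ replacing the tangent sheaf and cites exactly the two ingredients you use: local splitness on the source (Proposition \ref{p:localsplitRR}) and $\pi_\ast\Gc_\pi^{\nf_R}=0$. The one point worth tightening is that superconformal coordinates exist only on an \emph{\'etale} cover of the source (Remark \ref{rem:localsplitRR}), not on a refinement of a Zariski cover of $X$, so your \v{C}ech computation is a priori in the \'etale topology; this still yields $H^1(X,G)$ because $G$ is filtered by the subsheaves $\exp(\Nc^m\otimes\Gc_\pi^{\nf_R})_0$ with coherent abelian graded pieces.
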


\begin{lemma} \label{lem:LBR2} 
If $\Sc^{(m)}=(S,\Oc_\Sc/\Nc^{m+1})$, 
then
\begin{enumerate}
\item
any RR-SUSY curve $(\pi^{(m)}\colon\Xcal \to \Sc^{(m)},\Zc^{(m)},\Dcal^{(m)})$ can be extended to an RR-SUSY curve over $\Sc^{(m+1)}$;
\item  the space of equivalence classes of such extensions is naturally an affine space modelled on
$$
\Gamma(S, (\textstyle{\bigwedge}^m\Ec\otimes R^1\pi_\ast\Gc_\pi^{\nf_R})_0)\,.
$$
\end{enumerate}
\qed
\end{lemma}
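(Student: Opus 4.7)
The plan is to follow the strategy of \cite[Lemma 2.4]{LeRoth88}, replacing the superconformal vector-field sheaf used there by the sheaf $\Gc_\pi^{\nf_R}$ of RR-conformal deformations (Definition~\ref{def:infdefRR}). The essential inputs are the étale-local normal form for RR-SUSY curves of Proposition~\ref{p:localsplitRR}, the vanishing $\pi_\ast\Gc_\pi^{\nf_R}=0$ of Corollary~\ref{cor:h0dimrel}, and the fact that $\dim X=1$, which forces $H^{\geq 2}(X,-)=0$ for every quasi-coherent sheaf.

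To prove part (1), set $\Jc=\ker(\Oc_{\Sc^{(m+1)}}\to\Oc_{\Sc^{(m)}})$ and exploit the square-zero extension of sheaves of graded $\Oc_S$-algebras
\[
0\to\Jc\to\Oc_{\Sc^{(m+1)}}\to\Oc_{\Sc^{(m)}}\to 0.
\]
Étale-locally on $\Xcal^{(m)}$, Proposition~\ref{p:localsplitRR} produces superconformal coordinates $(z,\theta)$ in which $\Dcal^{(m)}$ is generated by $D=\frac{\partial}{\partial \theta}+z\theta\frac{\partial}{\partial z}$; extending this standard model verbatim with coefficients in $\Oc_{\Sc^{(m+1)}}$ furnishes a local extension. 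Any two such local extensions differ by an automorphism that is the identity modulo $\Jc$; the square-zero condition then forces such an automorphism to be the exponential $\exp(D')$ of an even section $D'$ of $(\Jc\otimes_{\Oc_S}\Gc_\pi^{\nf_R})_0$. The obstruction to patching the local extensions into a global one is a \v{C}ech $2$-cocycle valued in $(\Jc\otimes\Gc_\pi^{\nf_R})_0$, and since $\Jc$ is pulled back from $S$ the projection formula reduces its vanishing to $H^{\geq 2}$ of a quasi-coherent sheaf on the curve $X$, hence it vanishes.

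For part (2), fix an extension $\Xcal^{(m+1)}$. Any other extension is obtained by twisting the transition data of $\Xcal^{(m+1)}$ on double overlaps by automorphisms restricting to the identity on $\Xcal^{(m)}$, i.e.\ by a \v{C}ech $1$-cocycle valued in $(\Jc\otimes\Gc_\pi^{\nf_R})_0$; two extensions are equivalent precisely when their difference cocycle is a coboundary. Hence the set of equivalence classes is a torsor over $H^1(X,(\Jc\otimes\Gc_\pi^{\nf_R})_0)$. Because $\Jc$ is pulled back from $S$, the projection formula together with $\pi_\ast\Gc_\pi^{\nf_R}=0$ in the Leray spectral sequence identifies this $H^1$ with $\Gamma(S,(\textstyle{\bigwedge}^m\Ec\otimes R^1\pi_\ast\Gc_\pi^{\nf_R})_0)$, as asserted.

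The main delicacy will be verifying that the local gluing automorphisms measuring differences between extensions truly preserve the RR-superconformal distribution $\Dcal^{(m+1)}$, so that their generators lie in $\Gc_\pi^{\nf_R}\subseteq\Theta_{\Xcal/\Sc}$ rather than in the full tangent sheaf. This is where the explicit normal form $D=\frac{\partial}{\partial \theta}+z\theta\frac{\partial}{\partial z}$ from Proposition~\ref{p:localsplitRR}, combined with the defining condition of $\Gc_\pi^{\nf_R}$ in Definition~\ref{def:infdefRR}, is indispensable: it reduces the preservation of $\Dcal$ modulo $\Jc$ to the explicit bracket computation of Equation~\eqref{dprimad}, exactly parallel to the argument used in Lemma~\ref{lem:LBR1}.
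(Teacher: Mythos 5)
Your proposal follows exactly the route the paper intends: its proof of Lemma \ref{lem:LBR2} is the LeBrun--Rothstein argument verbatim, with the superconformal tangent sheaf replaced by $\Gc_\pi^{\nf_R}$, made workable by the local normal form of Proposition \ref{p:localsplitRR} and the vanishing $\pi_\ast\Gc_\pi^{\nf_R}=0$ of Corollary \ref{cor:h0dimrel}; your square-zero/\v{C}ech reconstruction, including the point that the gluing automorphisms must preserve $\Dcal$ and hence have generators in $\Gc_\pi^{\nf_R}$, is the intended proof. One correction: the vanishing of the obstruction $2$-cocycle does not follow from ``$\dim X=1$,'' since in the relative setting $X$ is the total space of the family and has dimension $\dim S+1$; what is actually used is that $S$ is affine and $\pi$ has relative dimension one, so that the Leray spectral sequence gives $H^2\bigl(X,(\Jc\otimes\Gc_\pi^{\nf_R})_0\bigr)\cong H^0\bigl(S,R^2\pi_\ast(\Jc\otimes\Gc_\pi^{\nf_R})_0\bigr)=0$ --- the same Leray argument you already apply, correctly, to compute the $H^1$ in part (2). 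Finally, note that with the paper's convention $\Sc^{(m)}=(S,\Oc_\Sc/\Nc^{m+1})$ one has $\Jc=\Nc^{m+1}/\Nc^{m+2}\cong\bigwedge^{m+1}\Ec$, so carrying out your identification of $\Jc$ explicitly would produce $\bigwedge^{m+1}\Ec$ rather than the $\bigwedge^{m}\Ec$ appearing in the statement; this indexing discrepancy is inherited from the statement itself, but your proof should make the identification of $\Jc$ explicit rather than leaving it implicit.
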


\subsection{Local structure of the moduli of RR-SUSY curves}\label{ss:localstructure}

The local structure of the supermoduli of RR-SUSY curves $\M_{\nf_R}$, assuming   that the latter exists, can be determined by looking at the infinitesimal deformations of the universal RR-SUSY curve class  $\Xcal_{\nf_R}\to M_{\nf_R}$ over the bosonic moduli $M_{\nf_R}$ given by Theorem \ref{thm:RRfunct}. 
Since the underlying scheme to $\M_{\nf_R}$ is $M_{\nf_R}$, one should have locally that 
$$
\M_{\nf_R}=(M_{\nf_R}, \bigwedge \Ec)\,,
$$
where $\Ec$ is a locally free sheaf on $M_{\nf_R}$ determined by 
$$
\Ec^\ast \simeq \Theta(\M_{\nf_R})_-^\Pi
$$
(see Equation \eqref{eq:bostangent}). If $\M_{\nf_R}$ exists,  $\Theta_{\M_{\nf_R}\vert M_{\nf_R}}$ is identified by Proposition \ref{p:tangentdefor} with the sheaf of
the (local) elements in
$\Hom(M_{\nf_R}[\epsilon_0,\epsilon_1],\M_{\nf_R})$ that are extensions of the immersion $M_{\nf_R}\hookrightarrow \M_{\nf_R}$. Even if we do not yet  know   whether $\M_{\nf_R}$ exists, we can make sense of the homomorphisms of any superscheme $\Sc$ to it: they are the elements of $\F_{\nf_R} (\Sc)$, that is, the relative RR-SUSY curves classes over $\Sc$.
It follows that  $\Ec^\ast$ \emph{is the sheaf of the odd infinitesimal deformations of the universal RR-SUSY curve class} $(\Xcal_{\nf_R}\to M_{\nf_R},\Zc_{\nf_R},\Upsilon_{\nf_R})$ (sse Proposition \ref{relksmap1}). 

We study those infinitesimal deformations by computing them locally for the \'etale topology.
Take a  trivializing cover  $p\colon U\to M_{\nf_R}$ (Definition \ref{def:trivializingRR}), so that 
$p^\ast\Upsilon_{\nf_R}=[\Lcl]$ for a line bundle $\Lcl$ on $X_U$ and there is an isomorphism $\varpi\colon\Lcl\otimes\Lcl\iso \kappa_{X_U/U}(Z_U)$. Let us write $\Lcl=\kappa_{X_U/U}(Z_U)^{1/2}$.
The sheaf of infinitesimal deformations is given by Corollary \ref{cor:h1dimrel}. We then have that the pull-back of $\Ec$ to $U$ should be given by
$$
\Ec_U\simeq (R^1\pi_{U\ast}(\kappa_{X_U/U}(Z_U)^{-1/2}))^\ast\iso \pi_{U\ast} (\kappa_{X_U/U}(Z_U)^{1/2}\otimes\kappa_{X_U/U})\,,
$$
where the second equality if relative Serre duality.
Then  the candidate for a local supermoduli of RR-SUSY curves is:

\begin{defin}\label{def:localmoduliRR} 
The local supermoduli of RR-SUSY curves is the superscheme:
$$
\U=(U, \bigwedge \Ec_U)\,,
$$
where $\Ec_U\simeq  \pi_{U\ast} (\kappa_{X_U/U}(Z_U)^{1/2}\otimes\kappa_{X_U/U})\simeq \pi_{U\ast} (\Lcl\otimes \kappa_{X_U/U})$. One has:
\begin{equation}\label{eq:dimRR}
\dim\U=(3g-3+\nf_R,2g-2+\nf_R/2)\,.
\end{equation}
\end{defin}

Proceeding again as in \cite{LeRoth88} (see also  \cite{DoHeSa97}) one has:

\begin{thm}\label{LBR1-RR} Let $(\pi\colon \Xcal\to V, Z,\Lcl)$ be an RR-SUSY curve over an affine scheme whose even
Kodaira-Spencer map $ks_0(\pi)$ is an isomorphism. Consider the sheaf $\Ec=\pi_\ast (\kappa_{X/V}(Z)^{1/2}\otimes\kappa_{X/V})$. Then, there is an RR-SUSY curve $(\bar\pi\colon \Xf\to \Vc, \Zc, \Dcal)$
over the superscheme $\Vc=(V,
\bigwedge \Ec)$ extending
$\Xcal$, whose
Kodaira-Spencer map $ks(\bar\pi)$ is an isomorphism.
\qed
\end{thm}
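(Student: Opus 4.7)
The plan is to construct $(\bar\pi\colon\Xf\to\Vc,\Zc,\Dcal)$ inductively along the filtration by powers of the nilpotent ideal $\Nc\subseteq\Oc_\Vc=\bigwedge\Ec$, following the strategy used by Le Brun and Rothstein~\cite{LeRoth88} for ordinary SUSY curves. Set $\Vc^{(m)}=(V,\Oc_\Vc/\Nc^{m+1})$; since $\Ec$ has finite rank $N$, the filtration stabilises with $\Vc^{(N-1)}=\Vc$. We aim to build, for each $m$, an RR-SUSY curve $\bar\pi_m\colon(\Xf_m,\Zc_m,\Dcal_m)\to\Vc^{(m)}$ extending $(\pi\colon\Xcal\to V,Z,\Lcl)$ whose relative Kodaira-Spencer map (Definition~\ref{ksmap3}) is an isomorphism, and then set $\bar\pi=\bar\pi_{N-1}$.

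The base case $m=0$ is precisely the hypothesis on $\pi$. The key structural observation that powers the induction is that by relative Serre duality there is a canonical isomorphism
\begin{equation*}
\Ec^{\ast}\simeq R^{1}\pi_{\ast}(\kappa_{X/V}(Z)^{-1/2}),
\end{equation*}
which by Corollary~\ref{cor:h1dimrel} is precisely the odd summand of $R^{1}\pi_{\ast}\Gc_{\pi}^{\nf_R}$. On the other hand, the restriction of $\Theta_\Vc$ to $V$ decomposes as $\Theta_V\oplus\Ec^{\ast}$ with $\Ec^\ast$ in odd degree, so both the odd tangent of $\Vc$ along $V$ and the odd part of the relative deformation sheaf are canonically the same object; the even component of the sought-after KS map coincides with the assumed isomorphism $ks_0(\pi)$. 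At the first-order step $\Vc^{(0)}=V\to\Vc^{(1)}=(V,\Oc_V\oplus\Ec)$, Lemma~\ref{lem:LBR2} presents the set of extensions as an affine space whose model involves $\Gamma(V,\Ec\otimes\Ec^{\ast})=\Gamma(V,\operatorname{End}(\Ec))$; we choose the extension whose odd KS component is the identity of $\Ec^{\ast}$, which produces $\bar\pi_1$ with $ks(\bar\pi_1)$ an isomorphism.

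For the inductive step $m\to m+1$, Lemma~\ref{lem:LBR2}(1) furnishes some extension $\bar\pi_{m+1}$, while part (2) identifies the affine space of equivalence classes of extensions. The crux of the argument, and the main obstacle, is to verify that modifying the extension by an element of this affine space produces the corresponding modification of the Kodaira-Spencer map at order $m+1$, so that one can always correct $\bar\pi_{m+1}$ to maintain the isomorphism property once $ks(\bar\pi_m)$ is iso. This reduces to comparing the \v{C}ech cocycle describing the extension of the structure sheaf with the differential data defining the KS map, following the pattern of \cite{LeRoth88}; the vanishing $\pi_{\ast}\Gc_{\pi}^{\nf_R}=0$ from Corollary~\ref{cor:h0dimrel} is essential here, as it guarantees that equivalent extensions determine the same KS map, rendering the comparison well defined and giving the required bijection between the two affine structures. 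Once this compatibility is in place, iterating through $m=1,\dots,N-1$ produces the desired family $\bar\pi\colon\Xf\to\Vc$ with $ks(\bar\pi)$ an isomorphism.
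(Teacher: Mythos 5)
Your proposal is correct and follows essentially the same route as the paper, which proves Theorem \ref{LBR1-RR} by ``proceeding as in \cite{LeRoth88}'': an induction along the nilpotent filtration of $\bigwedge\Ec$ using Lemma \ref{lem:LBR2} for the order-by-order extensions, the Serre-duality identification of $\Ec^\ast$ with the odd summand of $R^1\pi_\ast\Gc_\pi^{\nf_R}$ from Corollary \ref{cor:h1dimrel} to normalise the odd Kodaira--Spencer component at first order, and the vanishing $\pi_\ast\Gc_\pi^{\nf_R}=0$ of Corollary \ref{cor:h0dimrel} to make the comparison of extensions well defined. (Only a trivial indexing slip: since $\Nc^{N+1}=0$ but $\Nc^{N}=\bigwedge^{N}\Ec\neq 0$, the filtration terminates at $\Vc^{(N)}=\Vc$, not $\Vc^{(N-1)}$.)
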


\begin{thm}\label{LBR2-RR} Let $(\Xf\to \Vc,\Zc{,} \Dcal)$ be a
relative RR-SUSY curve whose Kodaira-Spencer map is an isomorphism and let 
$(\Xcal \to V,Z,\Lcl)$ the underlying relative RR-SUSY curve over the ordinary scheme $V$.
For every morphism of schemes  $\varphi\colon
S\to V$, there is a one-to-one correspondence
\begin{equation}
\begin{aligned}
\left\{
\begin{matrix}
		\text{morphisms of superschemes}\\ 
               \text{$\Sc\to\Vc$ extending}\\
               \varphi\colon S\to V
\end{matrix}
\right\} & \to 
\left\{
\begin{matrix}
		\hbox{classes of relative RR-SUSY curves}\cr 
               \hbox{$(\bar\Xf/\Sc,\bar\Zc,\bar\Dcal)$ extending}\cr
             \varphi^\ast(\Xcal \to V,Z,\Lcl)
               \end{matrix}
\right\} 
\\
\phi & \mapsto  \phi^\ast\Xf/\Sc\,.
\end{aligned}
\end{equation}
\qed\end{thm}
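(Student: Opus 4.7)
The plan is to prove the correspondence by induction on the order of the nilpotent thickening of the base. Let $\Nc\subset\Oc_\Sc$ denote the nilpotent ideal generated by the odd sections, and set $\Sc^{(m)}=(S,\Oc_\Sc/\Nc^{m+1})$, so that $\Sc^{(0)}=S$ and $\Sc^{(N)}=\Sc$ for some integer $N$. I would establish, for each $m\ge 0$, a natural bijection
\begin{equation*}
\Phi_m\colon\{\phi^{(m)}\colon\Sc^{(m)}\to\Vc\text{ extending }\varphi\}\iso\{\text{classes of RR-SUSY extensions of }\varphi^\ast\Xcal\text{ to }\Sc^{(m)}\},
\end{equation*}
defined by $\phi^{(m)}\mapsto(\phi^{(m)})^\ast(\Xf,\Zc,\Dcal)$, and then take $m=N$ to conclude. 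The base case $m=0$ is tautological from the hypothesis on $\varphi$.

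For the inductive step, fix a $\phi^{(m)}$ corresponding to a class of extensions $\bar\Xf^{(m)}$ and compare the fibres of the restriction maps on both sides, noting that $\Sc^{(m+1)}$ is a square-zero extension of $\Sc^{(m)}$ by the $\Oc_S$-module $J=\Nc^{m+1}/\Nc^{m+2}$. The set of morphism extensions of $\phi^{(m)}$ to $\Sc^{(m+1)}$, if non-empty, is a torsor under
\begin{equation*}
A_m=\Gamma\bigl(S,(J\otimes_{\Oc_S}\varphi^\ast\Theta_{\Vc\vert V})_0\bigr),
\end{equation*}
while by Lemma \ref{lem:LBR2} the set of classes of extensions of $\bar\Xf^{(m)}$ to $\Sc^{(m+1)}$ is non-empty and is a torsor over
\begin{equation*}
B_m=\Gamma\bigl(S,(J\otimes_{\Oc_S}\varphi^\ast R^1\pi_\ast\Gc_\pi^{\nf_R})_0\bigr).
\end{equation*}
The assumed Kodaira-Spencer isomorphism, interpreted through Proposition \ref{relksmap1RR}, induces an isomorphism $A_m\iso B_m$; the corresponding identification on obstruction classes in $H^1(S,\cdot)$ combined with the non-emptiness on the curve side forces the morphism side to be non-empty as well.

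The crux of the inductive step is the assertion that the pullback map $\phi^{(m+1)}\mapsto(\phi^{(m+1)})^\ast(\Xf,\Zc,\Dcal)$ is equivariant with respect to these two torsor actions under the Kodaira-Spencer isomorphism. This is essentially built into Definition \ref{ksmap3}: an infinitesimal perturbation of $\phi^{(m)}$ by an element $v\in A_m$ is transported by pullback to a modification of the RR-SUSY curve that, at the level of \v{C}ech cohomology, is precisely the image of $v$ under the connecting morphism used to define $ks(\pi)$. Once equivariance is established, $\Phi_{m+1}$ is a morphism of non-empty torsors whose underlying group homomorphism is an isomorphism, hence itself a bijection.

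The main obstacle is this equivariance verification: it amounts to comparing, in local superconformal coordinates (available \'etale locally by Proposition \ref{p:localsplitRR}), the \v{C}ech cocycle describing the transition data between two extended morphisms with the \v{C}ech cocycle describing the induced deformation of the RR-SUSY curve, and identifying their difference with the image of $v$ under the connecting morphism of the Kodaira-Spencer construction. The uniqueness aspect of the bijection, that distinct morphism extensions cannot produce equivalent RR-SUSY curve extensions, follows from the freeness of the $A_m$-action together with the vanishing $\pi_\ast\Gc_\pi^{\nf_R}=0$ of Corollary \ref{cor:h0dimrel}, which excludes nontrivial automorphisms of the extended curve that could collapse inequivalent morphisms to the same equivalence class.
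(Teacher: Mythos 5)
Your proposal is correct and follows essentially the same route as the paper, which proves Theorem \ref{LBR2-RR} by adapting the LeBrun--Rothstein induction on the order of nilpotents: at each square-zero stage both sides are (pseudo-)torsors, Lemma \ref{lem:LBR2} gives the torsor structure on the curve side, the Kodaira--Spencer isomorphism identifies the two groups and makes pullback equivariant, and Corollary \ref{cor:h0dimrel} guarantees that equivalence classes of extensions form a genuine torsor, exactly the steps you describe. The one small imprecision is that Lemma \ref{lem:LBR2} is stated for affine split bases, so for a general $\Sc$ the unconditional non-emptiness you claim on the curve side should be replaced by the matching of obstruction classes in $H^1(S,\cdot)$ under the Kodaira--Spencer isomorphism, which you already invoke and which yields that either both sides are empty or both are non-empty torsors.
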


\subsection{Global moduli of RR-SUSY curves and universal RR-SUSY curves}

In this subsection we give a construction of the moduli space of RR-SUSY curves as an (Artin) algebraic superspace (Definition \ref{def:etalerel}).
 
In view of Theorems \ref{LBR1-RR} and \ref{LBR2-RR}, the construction of the supermoduli of SUSY curves given in  \cite{DoHeSa97} can be now adapted, with similar proofs, to the case of RR-SUSY curves.
We consider the local supermoduli of RR-SUSY curves
$$
\U=(U, \bigwedge \Ec_U)\,,
$$
given by Definition \ref{def:localmoduliRR}.
Since the even component  of the Kodaira-Spencer map of the relative RR-SUSY curve $(\Xcal_U,Z_U,\Lcl)$ over $U$ is an isomorphism by Proposition \ref{p:ksisoRR}, we can apply Theorem \ref{LBR1-RR} to obtain a relative RR-SUSY curve
$$
(\Xf_\U,\Zc_U,\Dcal_U)\to \U,
$$
extending $(\Xcal_U,Z_U,\Lcl)\to U$ and
whose Kodaira-Spencer map is an isomorphism.
There is a natural morphism of presheaves $U\to SC_{\nf_R\vert \mathrm{\{Schemes\}}}$ which maps a scheme morphism $\varphi\colon S\to U$ to the RR-SUSY curve $\varphi^\ast \Xcal_{U}\to S$ (recall that $F_{\nf_R}$ is the \emph{presheaf} of relative RR-SUSY curves).
We can then consider the fibre product $SC_g^m\times_{S_{\nf_R\vert \mathrm{\{Schemes\}}}} U$ as a presheaf of 
superschemes. A section of this presheaf on a supercheme $\Sc$
is a  pair formed by an RR-SUSY curve
$\Xf\to\Sc$ and a morphism of schemes $\varphi\colon S\to U$,
such that $\Xf\to\Sc$ is an extension of $\varphi^\ast \Xcal_{U}\to S$.
By Theorem \ref{thm:RRfunct}, the sheaf associated to the presheaf $F_{\nf_R}\times_{S_{\nf_R\vert \mathrm{\{Schemes\}}}} U$ is
$$
\F_{\nf_R}\times_{M_{\nf_R}} U\,,
$$ 
where $\F_{\nf_R}$ is the sheaf of relative RR-SUSY curves.

\begin{lemma}\label{lem:pointsfunctorRR} There is an isomorphism of sheaves of superschemes
$$
\varpi\colon\U \iso \F_{\nf_R}\times_{M_{\nf_R}} U
\,,
$$
 which maps a superscheme morphism $\phi\colon \Sc \to \U$ to $\phi^\ast (\Xf_U,\Zc_U,\Dcal_U), \varphi)$ where $\varphi\colon S\to U$ is the underlying scheme morphism to $\phi$.\qed
\end{lemma}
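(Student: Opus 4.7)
The plan is to apply Theorem \ref{LBR2-RR} systematically, using that the Kodaira--Spencer map of $(\Xf_U,\Zc_U,\Dcal_U)\to \U$ is an isomorphism by construction (Theorem \ref{LBR1-RR}).

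First I define $\varpi$ on $\Sc$-points. A morphism $\phi\colon \Sc\to\U$ induces a morphism $\varphi\colon S\to U$ on underlying schemes. The pull-back $\phi^\ast(\Xf_U,\Zc_U,\Dcal_U)$ is an actual relative RR-SUSY curve over $\Sc$, hence represents an element of $F_{\nf_R}(\Sc)\subseteq \F_{\nf_R}(\Sc)$. Its image in $M_{\nf_R}(S)$ agrees with $p\circ\varphi\colon S\to M_{\nf_R}$, because over $U$ the restriction of $(\Xf_U,\Zc_U,\Dcal_U)$ is $(\Xcal_U,Z_U,\Lcl)$, whose class in $F_{\nf_R\vert\{\text{Schemes}\}}(U)$ corresponds to $p\colon U\to M_{\nf_R}$ via Theorem \ref{thm:RRfunct} and Proposition \ref{p:RRspin}. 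Thus $\varpi(\phi)=(\phi^\ast(\Xf_U,\Zc_U,\Dcal_U),\varphi)$ lies in $(\F_{\nf_R}\times_{M_{\nf_R}} U)(\Sc)$, and naturality in $\Sc$ is clear.

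For the inverse, since both sides are sheaves for the \'etale topology of superschemes, it suffices to construct the inverse after passing to an \'etale cover of $\Sc$. Given $(\xi,\varphi)\in(\F_{\nf_R}\times_{M_{\nf_R}} U)(\Sc)$, choose an \'etale cover $\Tc\to\Sc$ on which $\xi$ is represented by a genuine relative RR-SUSY curve $(\Xf,\Zc,\Dcal)\to\Tc$. The compatibility with $\varphi$ in $M_{\nf_R}$ says the underlying RR-spin curve on $T$ of $\xi_\Tc$ and the pull-back $\varphi_T^\ast(\Xcal_U,Z_U,\Lcl)$ have the same class in the relative Picard group; refining $\Tc$ further to kill the resulting $2$-torsion ambiguity (as in the argument preceding diagram \eqref{eq:RRusy}) we may assume they are isomorphic as RR-SUSY curves over $T$. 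Theorem \ref{LBR2-RR}, applied with $\Vc=\U$ and with $(\Xf,\Zc,\Dcal)$ as the prescribed extension of $\varphi_T^\ast(\Xcal_U,Z_U,\Lcl)$, yields a unique morphism $\phi_\Tc\colon\Tc\to\U$ extending $\varphi_\Tc$ whose induced extension equals $(\Xf,\Zc,\Dcal)$.

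The main obstacle is checking that the locally defined $\phi_\Tc$ descend to a morphism $\phi\colon\Sc\to\U$. On $\Tc\times_\Sc\Tc$ the two pulled-back $\phi_\Tc$'s extend the same underlying scheme morphism and produce equivalent extensions of the pull-back of $\varphi_T^\ast(\Xcal_U,Z_U,\Lcl)$, so the uniqueness clause of Theorem \ref{LBR2-RR} forces them to coincide. Since $\U$, viewed via its functor of points, is a sheaf for the \'etale topology (as any superscheme is, by faithfully flat descent), these data glue to a morphism $\phi\colon\Sc\to\U$. This construction is manifestly natural in $\Sc$. Finally, $\varpi\circ(\text{inverse})=\mathrm{id}$ and $(\text{inverse})\circ\varpi=\mathrm{id}$ because the bijection of Theorem \ref{LBR2-RR} matches morphisms $\Sc\to\U$ extending $\varphi$ with equivalence classes of RR-SUSY curves on $\Sc$ extending $\varphi^\ast(\Xcal_U,Z_U,\Lcl)$, which is precisely the correspondence encoded in $\varpi$.
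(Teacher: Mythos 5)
Your proof is correct and rests on the same key ingredient as the paper's, namely Theorem \ref{LBR2-RR} applied to the curve $(\Xf_\U,\Zc_U,\Dcal_U)\to\U$ produced by Theorem \ref{LBR1-RR}. The only difference is one of packaging: the paper first records the \emph{presheaf} isomorphism $\U\iso F_{\nf_R}\times_{S_{\nf_R\vert\{\mathrm{Schemes}\}}}U$ and then observes that, being isomorphic to the sheaf $\U$, this presheaf already coincides with its sheafification $\F_{\nf_R}\times_{M_{\nf_R}}U$, whereas you unwind that sheafification step by hand, constructing the inverse on sections of the sheaf fibre product via an \'etale cover and gluing through the uniqueness clause of Theorem \ref{LBR2-RR}.
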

\begin{proof}
By Theorem~\ref{LBR2-RR}, there is a presheaf isomorphism
\begin{equation}
\U\iso F_{\nf_R}\times_{S_{\nf_R\vert \mathrm{\{Schemes\}}}} U\,.
\end{equation}
Since (the functor of   points of) $\U$ is a sheaf, the presheaf of the right-hand side is also a sheaf, so that it coincides with its associated sheaf $\F_{\nf_R}\times_{M_{\nf_R}} U$.
\end{proof}

The trivial \'etale equivalence relation $(p_1,p_2)\colon U\times_{M_{\nf_R}}U \rra U$ induces an equivalence relation of 
sheaves of superschemes
$$
\xymatrix{
\F_{\nf_R}\times_{M_{\nf_R}}  U\times_{M_{\nf_R}}U\ar@<3pt>[r]^(.6){(p_1,p_2)}\ar@<-3pt>[r]& \F_{\nf_R} \times_{M_{\nf_R}} U\ar[r]^(.6)p& \F_{\nf_R}\,
 }
$$
with categorial quotient $\F_{\nf_R}$.
Since there is a categorical quotient $U\times_{M_{\nf_R}}U\rra U\to M_{\nf_R}$ in the category of schemes, one can apply Lemma \ref{lem:pointsfunctorRR} and Equation \eqref{eq:dimRR} to prove:

\begin{thm}\label{thm:globlamoduiRR} The functor $\F_{\nf_R}$ of relative RR-SUSY curves of genus $g$ along a (nonramifiec) puncture of degree $\nf_R$ is the categorial quotient
$$
\xymatrix{
\U\times_{M_{\nf_R}}U\ar@<3pt>[r]^(.65){(\rho_1,\rho_2)}\ar@<-3pt>[r]& \U\ar[r]^\rho& \F_{\nf_R}\,,
 }
$$
in the category of sheaves of superschemes, where $\rho_i=\varpi^{-1}\circ p_i\circ (\varpi\times 1)$ and $\rho=p\circ\varpi$. Moreover, $(\rho_1,\rho_2)\colon \U\times_{M_{\nf_R}}U \rra \U$ is an \'etale equivalence relation of superschemes, so that $\F_{\nf_R}$ is identified with a separated Artin algebraic superspace $\M_{\nf_R}$ whose ordinary underlying algebraic space is the scheme $M_{\nf_R}$. Moreover,
$$
\dim \M_{\nf_R}=(3g-3+\nf_R,2g-2+\nf_R/2)\,.
$$
\qed
\end{thm}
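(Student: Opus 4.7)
The plan is to transport the étale equivalence relation $(p_1,p_2)\colon U\times_{M_{\nf_R}}U \rra U$ (with quotient $M_{\nf_R}$, per Remark~\ref{rem:trivcov}) to the superscheme level through the isomorphism $\varpi\colon \U \iso \F_{\nf_R}\times_{M_{\nf_R}}U$ of Lemma~\ref{lem:pointsfunctorRR}, and then read off the claimed representability, separation and dimension. First I would form the fiber product $\U\times_{M_{\nf_R}}U$ in the category of superschemes (whose underlying scheme is $U\times_{M_{\nf_R}}U$) and define $\rho_i=\varpi^{-1}\circ p_i\circ(\varpi\times 1)$. Since $\rho_1$ is the base change of the étale morphism $U\to M_{\nf_R}$ along $\U\to M_{\nf_R}$, and $\rho_2$ differs from $\rho_1$ by composition with isomorphisms induced by $\varpi$, Proposition~\ref{p:etalelocal} implies both $\rho_i$ are étale. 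The equivalence-relation axioms are inherited from $(p_1,p_2)$ because $\varpi$ is an isomorphism of sheaves of superschemes and the axioms are formulated in that sheaf category.

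Next I would identify the categorical quotient of $(\rho_1,\rho_2)$ in sheaves of superschemes with $\F_{\nf_R}$. The base-change functor $\F_{\nf_R}\times_{M_{\nf_R}}(-)$ preserves colimits of sheaves; applied to the coequaliser $U\times_{M_{\nf_R}}U \rra U \to M_{\nf_R}$, which is exact in étale sheaves because $U\to M_{\nf_R}$ is an étale cover, together with the identification $\F_{\nf_R}\times_{M_{\nf_R}}M_{\nf_R}\iso\F_{\nf_R}$ and Lemma~\ref{lem:pointsfunctorRR}, this yields exactness of
\[ \U\times_{M_{\nf_R}}U \rra \U \to \F_{\nf_R} \]
in sheaves of superschemes. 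By Definition~\ref{def:etalerel}, $\F_{\nf_R}$ is therefore an Artin algebraic superspace $\M_{\nf_R}$, whose underlying algebraic space is the scheme $M_{\nf_R}$ by Theorem~\ref{thm:RRfunct}. Separation of $\M_{\nf_R}$ follows from the separation of $M_{\nf_R}$ (quasi-projective by Theorem~\ref{thm:RRfunct}), along the same lines as the argument cited in Proposition~\ref{p:univcurvebos}, and $\dim\M_{\nf_R}=\dim\U=(3g-3+\nf_R,2g-2+\nf_R/2)$ by Equation~\eqref{eq:dimRR}.

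The main obstacle, to my mind, is the descent step above: ensuring cleanly that the categorical quotient in sheaves of superschemes commutes with base change along the sheaf morphism $\F_{\nf_R}\to M_{\nf_R}$, and that the coequaliser taken in that category reproduces the sheaf $\F_{\nf_R}$ rather than some presheaf that still needs sheafifying. This is formally parallel to the construction carried out for SUSY curves in~\cite{DoHeSa97}, and once Theorems~\ref{LBR1-RR} and~\ref{LBR2-RR} are in hand as substitutes for their Neveu-Schwarz analogues, the argument carries over essentially verbatim.
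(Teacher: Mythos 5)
Your proposal is correct and follows essentially the same route as the paper: the paper likewise transports the \'etale equivalence relation $(p_1,p_2)\colon U\times_{M_{\nf_R}}U\rra U$ through the isomorphism $\varpi$ of Lemma~\ref{lem:pointsfunctorRR}, identifies the quotient with $\F_{\nf_R}$ by applying $\F_{\nf_R}\times_{M_{\nf_R}}(-)$ to the exact sequence of \'etale sheaves $U\times_{M_{\nf_R}}U\rra U\to M_{\nf_R}$, and concludes separatedness as in Proposition~\ref{p:univcurvebos} and the dimension from Equation~\eqref{eq:dimRR}. The "descent step" you flag as the main obstacle is exactly the point the paper handles by the same exactness-of-base-change observation, so your argument matches the intended one.
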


The separateness is proved as in Proposition \ref{p:univcurvebos}.

\subsection{Universal relative RR-SUSY curves}

 We now construct universal relative RR-SUSY curve classes on the supermodui $\M_{\nf_R}$.
 The algebraic superspace morphism $\rho \colon\U\to \M_{\nf_R}$ is induced by the relative RR-SUSY curve $\Xf_{\U}\to \U$, so that one has ${\Xf_{\U}}\iso\rho^\ast \Xf_{\nf_R}$ where $\Xf_{\nf_R}$ is the universal element corresponding to the identity in $\Sc\Cc_g^m(\M_{\nf_R})=\Hom(\M_{\nf_R},\M_{\nf_R})$.

If we consider the \'etale equivalence relation of superschemes $(\rho_1,\rho_2)\colon \U\times_{M_{\nf_R}}U \rra \U$, one has $\rho\circ\rho_1=\rho\circ\rho_2$, and then, $\rho_1^\ast {\Xf_{\U}}\iso \rho_2^\ast {\Xf_{\U}}$ as relative RR-SUSY curve classes. It follows that there is an \'etale covering of superschemes $\psi\colon \Rcal\to \U\times_{M_{\nf_R}}U$ such that $\bar\rho_1^\ast \Xf_{\U}\iso \bar\rho_2^\ast \Xf_{\U}$ as \emph{true relative RR-SUSY curves} over $\Rcal$, where we have written $\bar\rho_1=\rho_1\circ\psi$, $\bar\rho_2=\rho_2\circ\psi$. Let us denote by $\Xf_\Rcal$ any of them. The pull-back of $(\bar\rho_1,\bar\rho_2)$ by $\Xf_{\U}\to \U$ gives an \'etale equivalence relation of superschemes $(q_1,q_2)\colon \Xf_\Rcal  \rra \Xf_{\U}$
and a diagram 

\begin{equation}\label{eq:univsupercurveRR}
\xymatrix{
(\Xf_{\Rcal},\Zc_\Rcal,\Dcal_\Rcal) \ar@<3pt>[r]^{(q_1,q_2)}\ar@<-3pt>[r] \ar[d]& (\Xf_\U,\Zc_U,\Dcal_U) \ar[r] \ar[d]&(\Xf_{\nf_R},\Zf_{\nf_R},\Df_{\nf_R}) \\
\Rcal \ar@<3pt>[r]^{(\bar\rho_1,\bar\rho_2)}\ar@<-3pt>[r] & \U \ar[r] & \M_{\nf_R}
}
\end{equation}

\begin{prop}\label{p:universalcurveRR}
The sheaf $\Xf_{\nf_R}$ is representable by the quotient of an \'etale equivalence relation of superschemes, that is, is representable by a separated (Artin) algebraic superspace. Moreover, the two vertical morphisms in the diagram \eqref{eq:univsupercurveRR} induce a morphism of algebraic superspaces
$$
\Xf_{\nf_R}\to\M_{\nf_R}\,,
$$
which endows $(\Xf_{\nf_R}\to\M_{\nf_R}, \Zf_{\nf_R},\Df_{\nf_R})$ with the structure of a relative RR-SUSY curve class. We call it the \emph{universal relative RR-SUSY curve} of genus $g$ and degree $\nf_R$.
\qed
\end{prop}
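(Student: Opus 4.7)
The plan is to follow the strategy of Proposition \ref{p:univcurvebos} in the super setting, using the local structure theorems \ref{LBR1-RR} and \ref{LBR2-RR} together with étale descent. First I would verify that $(q_1,q_2)\colon\Xf_\Rcal\rra\Xf_\U$ is an étale equivalence relation of superschemes. By construction, each $q_i$ is the pull-back of $\bar\rho_i\colon\Rcal\to\U$ along $\bar\pi\colon\Xf_\U\to\U$, so Proposition \ref{p:etalelocal} (étale morphisms are preserved by base change) implies that $q_i$ is étale. The reflexivity, symmetry and transitivity of $(q_1,q_2)$ are inherited from those of $(\bar\rho_1,\bar\rho_2)$ via the identifications $\bar\rho_1^\ast\Xf_\U\iso\Xf_\Rcal\iso\bar\rho_2^\ast\Xf_\U$.

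Next I would identify the categorical quotient of this relation (in the category of sheaves of superschemes for the étale topology) with the universal class $\Xf_{\nf_R}$. The rightmost column of diagram \eqref{eq:univsupercurveRR} exists precisely because $\rho\colon\U\to\M_{\nf_R}$ corresponds to $\Xf_\U$ under the identity section in $\F_{\nf_R}(\M_{\nf_R})$, so the two compositions $\Xf_\Rcal\rra\Xf_\U\to\Xf_{\nf_R}$ coincide. This yields a canonical morphism from the coequalizer of $q_1,q_2$ to $\Xf_{\nf_R}$, which is an isomorphism by an argument parallel to Lemma \ref{lem:pointsfunctorRR}: sections of $\Xf_{\nf_R}$ on a test superscheme $\Sc$ are pairs $(\Xf/\Sc,\sigma)$ with $\sigma$ a section of the classifying morphism $\Sc\to\M_{\nf_R}$, and étale locally on $\Sc$ such data lift to morphisms into $\Xf_\U$ whose double-pull-backs to the fibre product agree via the data encoded in $\Xf_\Rcal$.

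Having established the quotient description, the morphism $\Xf_{\nf_R}\to\M_{\nf_R}$ is produced by applying the universal property of categorical quotients to the commutative diagram \eqref{eq:univsupercurveRR}, and the triple $(\Zc_U,\Dcal_U)$ descends to $(\Zf_{\nf_R},\Df_{\nf_R})$ because its two pull-backs along $(q_1,q_2)$ both coincide with $(\Zc_\Rcal,\Dcal_\Rcal)$. Separateness follows by the argument used in Proposition \ref{p:univcurvebos}: the underlying ordinary algebraic space of $\Xf_{\nf_R}$ is the separated scheme $X_{\nf_R}$, hence the diagonal morphism $\Xf_\Rcal\xrightarrow{(q_1,q_2)}\Xf_\U\times_{\M_{\nf_R}}\Xf_\U$ covers the closed immersion $X_\Rcal\hookrightarrow X_\U\times_{M_{\nf_R}}X_\U$ and the induced map on structure sheaves is surjective.

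The main obstacle I anticipate is the second step, where one must promote the set-theoretic isomorphism class identity $\rho_1^\ast\Xf_\U\sim\rho_2^\ast\Xf_\U$ in $\F_{\nf_R}(\U\times_{M_{\nf_R}}\U)$ to an actual isomorphism of relative RR-SUSY curves after an étale refinement $\psi\colon\Rcal\to\U\times_{M_{\nf_R}}\U$, and then ensure that the chosen isomorphisms satisfy the cocycle condition on triple fibre products (so that transitivity holds and the quotient exists as an algebraic superspace). This mirrors the refinement argument in Proposition \ref{p:univcurvebos} and the analogous step in \cite[Theorem 4.2]{DoHeSa97}: one successively refines $\Rcal$ to kill the cocycle obstruction, which is controlled because automorphisms of an RR-SUSY curve over an ordinary base inducing the identity on the underlying ordinary data are constrained (cf.\ Corollary \ref{cor:h0dimrel}). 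Once this refinement is constructed the remaining verifications are formal.
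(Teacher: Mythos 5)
Your proposal is correct and follows essentially the same route as the paper: pass to an \'etale refinement $\psi\colon\Rcal\to\U\times_{M_{\nf_R}}U$ on which the isomorphism of classes $\rho_1^\ast\Xf_\U\sim\rho_2^\ast\Xf_\U$ becomes an actual isomorphism of relative RR-SUSY curves, pull back to obtain the \'etale equivalence relation $(q_1,q_2)\colon\Xf_\Rcal\rra\Xf_\U$, take the quotient, and deduce separateness as in Proposition \ref{p:univcurvebos}. Your explicit attention to the cocycle condition on triple fibre products (controlled via Corollary \ref{cor:h0dimrel}) is a point the paper leaves implicit, but it does not change the argument.
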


\bigskip
\section{Supermoduli of RR-SUSY curves with NS punctures}\label{s:allpunctures}

This last section is devoted to the construction a supermoduli for SUSY curves with both Neveu-Schwarz (NS) and \RR\  (RR) punctures. The construction follows directly  from the construction of the supermoduli of SUSY curves with NS punctures given in \cite{DoHeSa97} and the above construction of the supermoduli for RR-SUSY curves.

Recall that an NS puncture of degree $\nf_{NS}$ on a relative SUSY curve $(\pi\colon\Xcal\to\Sc, \Dcal)$ is an ordered family $(s_1,\dots,s_{\nf_{NS}})$ of  sections $s\colon \Sc\hookrightarrow \Xcal$ of $\pi$ and that a morphism of SUSY curves with NS  punctures is a morphism of SUSY curves that preserve the corresponding sections.
If $\pi\colon\Xcal\to \Sc$ is a supercurve, one can define the (relative) $N$-supersymmetric power $\Xcal^{[N]}\to\Sc$ as the quotient superscheme of the cartesian product $\Xcal^N$ by the natural action of the symmetric group $S_N$ by automorphisms of superschemes \cite{DoHeSa91,DoHeSa93} (in these references is proven that, $\Xcal^{[N]}$ is locally split if and only if the relative odd dimension is $n=1$).
Then, for any supercurve $\pi\colon\Xcal\to \Sc$ of relative dimension $(1,1)$, the $N$-supersymmetric power is a locally split superscheme and the morphism $\Xcal^{[N]}\to\Sc$ has relative dimension $(N,N)$.

Let us fix the number $\nf_R$ of \RR\ punctures and the number $\nf_{NS}$ of Neveu-Schwarz punctures,
and let us consider the diagram  \eqref{eq:univsupercurveRR}. Taking $\nf_{NS}$ symmetric powers on the first two columns, which are relative supercurves, we get a new diagram
$$
\xymatrix{
\Xf_{\Rcal}^{[\nf_{NS}]}  \ar@<3pt>[r]^{(q_1,q_2)}\ar@<-3pt>[r] \ar[d]& \Xf_\U^{[\nf_{NS}]}  \ar[r] \ar[d]& \Xf_{\nf_{NS},\nf_{R}} \ar[d] \\
\Rcal \ar@<3pt>[r]^{(\bar\rho_1,\bar\rho_2)}\ar@<-3pt>[r] & \U \ar[r] & \M_{\nf_R}
}
$$
where $\Xf_{\nf_{NS},\nf_{R}}$ is the algebraic superspace defined as the quotient of the \'etale equivalence relation $(q_1,q_2)$.

 Combining Theorem \ref{thm:globlamoduiRR} and \cite[Theorem 31]{DoHeSa97} we have:
 
 \begin{thm}\label{thm:mod}  The functor of relative RR-SUSY curves of genus $g$ along a (nonramified) RR puncture of degree $\nf_R$   with $\nf_{NS}$ NS punctures is representable by a separated algebraic superspace $ \Xf_{\nf_{NS},\nf_{R}}$, which is the $\nf_{NS}$-supersymmetric power of the relative universal RR-SUSY curve class $\Xf_{\nf_R}\to\M_{\nf_R}$ over the supermoduli of RR-SUSY curves along an RR puncture of degree $\nf_R$. Its dimension is
\begin{equation}\label{eq:dimRR}
 \dim \Xf_{\nf_{NS},\nf_{R}} = (3g-3+\nf_{NS}+\nf_R, 2g-2+\nf_{NS}+\nf_R/2)\,.
\end{equation}
\qed \end{thm}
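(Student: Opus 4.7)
The plan is to combine the universal relative RR-SUSY curve $\Xf_{\nf_R}\to\M_{\nf_R}$ of Proposition~\ref{p:universalcurveRR} with the relative supersymmetric power construction recalled just before the theorem. The guiding principle is that a family of $\nf_{NS}$ NS punctures on a relative RR-SUSY curve $\Xf\to\Sc$ is the same datum, up to permutation, as a section $\Sc\to\Xf^{[\nf_{NS}]}$ of the relative $\nf_{NS}$-supersymmetric power, so the moduli algebraic superspace sought should be precisely the total space of the relative symmetric power of the universal RR-SUSY curve over $\M_{\nf_R}$.

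Concretely, I would start from the diagram \eqref{eq:univsupercurveRR} that presents $\Xf_{\nf_R}\to\M_{\nf_R}$ as the quotient of the \'etale equivalence relation $(q_1,q_2)\colon(\Xf_\Rcal,\Zc_\Rcal,\Dcal_\Rcal)\rra(\Xf_\U,\Zc_U,\Dcal_U)$ of true relative RR-SUSY curves lying over $(\bar\rho_1,\bar\rho_2)\colon\Rcal\rra\U$. Since $\Xf_\U\to\U$ and $\Xf_\Rcal\to\Rcal$ are smooth relative supercurves of relative dimension $(1,1)$, the cited results of \cite{DoHeSa91,DoHeSa93} yield locally split superschemes $\Xf_\U^{[\nf_{NS}]}\to\U$ and $\Xf_\Rcal^{[\nf_{NS}]}\to\Rcal$ of relative dimension $(\nf_{NS},\nf_{NS})$. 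Because the relative supersymmetric power is compatible with base change, one has $\Xf_\Rcal^{[\nf_{NS}]}\iso\Xf_\U^{[\nf_{NS}]}\times_\U\Rcal$, and the induced pair $(q_1,q_2)\colon\Xf_\Rcal^{[\nf_{NS}]}\rra\Xf_\U^{[\nf_{NS}]}$ is again an \'etale equivalence relation of superschemes by Proposition~\ref{p:etalelocal}. I would then define $\Xf_{\nf_{NS},\nf_R}$ as its categorical quotient in the category of sheaves of superschemes for the \'etale topology, obtaining an algebraic superspace in the sense of Definition~\ref{def:etalerel}; separateness is inherited from that of $\Xf_{\nf_R}$ by the same closed-immersion argument as in Proposition~\ref{p:univcurvebos}.

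The functorial identification is the main content of the proof. Following the pattern of Lemma~\ref{lem:pointsfunctorRR} and \cite[Theorem~31]{DoHeSa97}, a morphism $\Sc\to\Xf_\U^{[\nf_{NS}]}$ is equivalent to a pair consisting of a morphism $\phi\colon\Sc\to\U$, which by Theorem~\ref{LBR2-RR} yields a relative RR-SUSY curve $\bar\Xf\to\Sc$ extending the pullback of $(\Xcal_U,Z_U,\Lcl)$, together with a section of the relative symmetric power $\bar\Xf^{[\nf_{NS}]}\to\Sc$, which by the universal property of the supersymmetric power encodes $\nf_{NS}$ NS punctures on $\bar\Xf$. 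Descending these data through the relation $(q_1,q_2)$ in the \'etale topology identifies the quotient $\Xf_{\nf_{NS},\nf_R}$ with the sheafification of isomorphism classes of RR-SUSY curves carrying both types of punctures over $\Sc$, and the dimension \eqref{eq:dimRR} follows by adding the relative dimension $(\nf_{NS},\nf_{NS})$ of the symmetric power to $\dim\M_{\nf_R}=(3g-3+\nf_R,2g-2+\nf_R/2)$ from Theorem~\ref{thm:globlamoduiRR}. The main obstacle I anticipate is the compatibility of the symmetric power with the RR-conformal data: one must ensure that the NS puncture sections can be chosen disjoint from $\Zc_U$ (possibly after refining the trivializing cover), and that the RR-conformal distribution $\Dcal_U$ transports coherently to the symmetrized family so that the moduli functor represented by $\Xf_\U^{[\nf_{NS}]}$ really classifies RR-SUSY curves with both kinds of punctures rather than some coarser or finer variant.
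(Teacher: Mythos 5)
Your proposal follows essentially the same route as the paper: the authors take the $\nf_{NS}$-th relative supersymmetric power of the two left columns of diagram \eqref{eq:univsupercurveRR}, define $\Xf_{\nf_{NS},\nf_R}$ as the quotient of the induced \'etale equivalence relation, and conclude by combining Theorem \ref{thm:globlamoduiRR} with \cite[Theorem 31]{DoHeSa97}, with the dimension obtained exactly as you describe. The only remark is that your anticipated obstacle about keeping the NS sections disjoint from $\Zc_U$ is not part of this statement --- Theorem \ref{thm:mod} allows arbitrary (possibly colliding) NS punctures, and the disjointness refinement is handled separately in Theorem \ref{thm:mod2} by passing to the open sub-superspace $\Yf_{\nf_{NS},\nf_R}$.
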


We have assumed that the positive superdivisors on which the \RR\ punctures are supported are reduced, or, in the case of families, that the superdivisors are unramified over the base; the reason is that  Proposition \ref{p:localsplitRR} or the local structure of the supermoduli as given in Subsection \ref{ss:localstructure} may fail to be true without that assumption. This is not needed in the  case NS-punctures, as we have seen in Theorem \ref{thm:mod}. In string theory punctures are   insertion points of vertex operators, so that to avoid divergences usually the assumption is made that the sections defining an NS puncture of degree $\nf_{NS}$ determine $\nf_{NS}$ different points on every fibre, and moreover that these points do not belong to the positive superdivisors along which the \RR\ punctures are supported \cite[Sections 6.2.2, 6.2.3]{Witten19}.

So one might like to have a construction where the punctures are supposed to be distinct, in the sense specified above. This can be easily achieved; given a supercurve $\pi\colon\Xcal\to \Sc$, and a positive relative superdivisor $\Zc\hookrightarrow X$,   write $\Ycal= \Xcal-\Zc$. If $\Ycal^N_0$ is the open subsuperscheme of the cartesian product $\Ycal^N$ that parameterizes families of $N$ different points (that is, the complementary of all the partial diagonals), denote by $\Ycal^{(N)}\to \Sc$ the quotient superscheme of  $\Ycal^N_0$ by the natural action of the symmetric group; it is an open subsuperscheme of both $\Ycal^{[N]}$ and $\Xcal^{[N]}$.
Proceeding as before, we get a diagram
$$
\xymatrix{
\Yf_{\Rcal}^{(\nf_{NS})}  \ar@<3pt>[r]^{(q_1,q_2)}\ar@<-3pt>[r] \ar[d]& \Yf_\U^{(\nf_{NS})}  \ar[r] \ar[d]& \Yf_{\nf_{NS},\nf_{R}} \ar[d] \\
\Rcal \ar@<3pt>[r]^{(\bar\rho_1,\bar\rho_2)}\ar@<-3pt>[r] & \U \ar[r] & \M_{\nf_R}
}
$$
where $\Yf_\U=\Xf_U-\Zc_\U$, $\Yf_\Rcal=\Xf_U-\Zc_\Rcal$ and $\Yf_{\nf_{NS},\nf_{R}}$ is the algebraic superspace defined as the quotient of the \'etale equivalence relation $(q_1,q_2)$.

This gives the moduli we are seeking for:
   \begin{thm}\label{thm:mod2}  The functor of relative RR-SUSY curves of genus $g$ along a (nonramified) RR puncture of degree $\nf_R$   with $\nf_{NS}$  fibrewise distinct NS punctures that do not collide with the RR-punctures is representable by a separated algebraic superspace $ \Yf_{\nf_{NS},\nf_{R}}\to\M_{\nf_R}$, which is an open algebraic sib-superspace of $\Xf_{\nf_R}$.\qed \end{thm}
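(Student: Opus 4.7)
The plan is to mirror, step by step, the proof of Theorem \ref{thm:mod}, replacing $\Xcal^{[\nf_{NS}]}$ throughout by the open sub-superscheme $\Ycal^{(\nf_{NS})}$ parametrizing fibrewise distinct $\nf_{NS}$-tuples disjoint from $\Zc$, and then leveraging the fact that this sub-superscheme is open in $\Xcal^{[\nf_{NS}]}$ by construction (so the statement should be read as an open sub-superspace of $\Xf_{\nf_{NS},\nf_R}$).

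First I would verify that the displayed diagram is well-posed: since $\bar\rho_1^\ast\Zc_\U\iso\Zc_\Rcal\iso\bar\rho_2^\ast\Zc_\U$ on $\Xf_\Rcal$, the open complements satisfy $\bar\rho_i^\ast\Yf_\U\iso\Yf_\Rcal$, so the two horizontal arrows $(q_1,q_2)\colon\Yf_\Rcal^{(\nf_{NS})}\rra\Yf_\U^{(\nf_{NS})}$ obtained by forming the $\nf_{NS}$-supersymmetric product and restricting to the locus of distinct points are well-defined, and they form an \'etale equivalence relation (they are the restrictions of the \'etale equivalence relation already used in Theorem \ref{thm:mod}). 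By Definition \ref{def:etalerel} the categorical quotient $\Yf_{\nf_{NS},\nf_R}$ is then an algebraic superspace over $\M_{\nf_R}$.

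Second, I would identify the functor of points of $\Yf_{\nf_{NS},\nf_R}$ with the desired moduli functor, adapting \cite[Theorem 31]{DoHeSa97}. A section over a superscheme $\Sc$ of the moduli functor consists of a relative RR-SUSY curve $(\Xcal\to\Sc,\Zc,\Dcal)$ of degree $\nf_R$ together with an ordered $\nf_{NS}$-tuple of sections $(s_1,\dots,s_{\nf_{NS}})$ that is fibrewise distinct and whose image avoids $\Zc$. Such a tuple corresponds to an $\Sc$-morphism $\Sc\to (\Xcal\setminus\Zc)^{(\nf_{NS})}$, where the latter is the quotient by $S_{\nf_{NS}}$ of the open subset of ordered distinct tuples (the relevant quotient exists and has the expected dimension by \cite{DoHeSa91,DoHeSa93}, since the relative odd dimension of $\Xcal/\Sc$ is $1$). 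Composing with the universal property of the relative universal RR-SUSY curve class $\Xf_{\nf_R}\to\M_{\nf_R}$ (Proposition \ref{p:universalcurveRR}), one obtains the required bijection with $\Hom(\Sc,\Yf_{\nf_{NS},\nf_R})$.

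Third, I would deduce that $\Yf_{\nf_{NS},\nf_R}\hookrightarrow\Xf_{\nf_{NS},\nf_R}$ is an open immersion. The open immersions $\Yf_\U^{(\nf_{NS})}\hookrightarrow\Xf_\U^{[\nf_{NS}]}$ and $\Yf_\Rcal^{(\nf_{NS})}\hookrightarrow\Xf_\Rcal^{[\nf_{NS}]}$ intertwine the two \'etale equivalence relations; they therefore induce a morphism $\Yf_{\nf_{NS},\nf_R}\to\Xf_{\nf_{NS},\nf_R}$ whose pull-back along the \'etale covering $\Xf_\U^{[\nf_{NS}]}\to\Xf_{\nf_{NS},\nf_R}$ is precisely the open immersion $\Yf_\U^{(\nf_{NS})}\hookrightarrow\Xf_\U^{[\nf_{NS}]}$. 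Since $\Xf_{\nf_{NS},\nf_R}$ is separated (Theorem \ref{thm:mod}) and the property of being an open immersion can be checked after pull-back along an \'etale covering of the target, the induced morphism is itself an open immersion. The separatedness of $\Yf_{\nf_{NS},\nf_R}\to\M_{\nf_R}$ is then inherited from that of $\Xf_{\nf_{NS},\nf_R}\to\M_{\nf_R}$.

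The hard part will be the third step: openness of the image descends under \'etale covers essentially for free, but the monomorphism property required to upgrade an \'etale morphism to an open immersion does not. To handle this one must use that $\Xf_{\nf_{NS},\nf_R}$ is separated together with the compatibility of the two equivalence relations, so that the morphism of quotients is the natural monomorphism obtained by restricting the equivalence relation to an open piece, rather than merely an \'etale morphism between the two quotients.
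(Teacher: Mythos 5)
Your proposal is correct and follows essentially the same route as the paper, which simply restricts the \'etale equivalence relation of Theorem \ref{thm:mod} to the saturated open locus $\Yf^{(\nf_{NS})}\subset\Xf^{[\nf_{NS}]}$ of distinct points avoiding the puncture and takes the quotient; your step 1 (compatibility $\bar\rho_i^\ast\Yf_\U\iso\Yf_\Rcal$) is exactly the saturation the paper uses implicitly, and your reading of ``open sub-superspace of $\Xf_{\nf_{NS},\nf_R}$'' matches the intended statement. The extra care you take in step 3 (descent of the open-immersion property along the \'etale atlas, using saturation rather than mere \'etaleness) is a correct and welcome elaboration of what the paper leaves unsaid.
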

  The dimension of this moduli space is again that given in \eqref{eq:dimRR}, of course. 
 


\bigskip\frenchspacing
\def\cprime{$'$}

\end{document}